\documentclass[12 pt,oneside,reqno]{article}
\usepackage[english]{babel}

\usepackage[letterpaper,top=2cm,bottom=2cm,left=3cm,right=3cm,marginparwidth=1.75cm]{geometry}


\usepackage[dvips]{graphicx}
\usepackage{calc}
\usepackage{color}
\usepackage{amsmath}
\usepackage{amssymb}
\usepackage{amscd}
\usepackage{amsthm}
\usepackage{amsbsy}
\usepackage{delarray}
\usepackage{tikz}
\usepackage{tikz-cd}
\usepackage{enumerate}
\usepackage{enumitem}
\usepackage[T1]{fontenc}
\usepackage{enumerate}
\usepackage{stackengine} 
\usepackage{mathrsfs}
\usepackage{hyperref}
\usepackage[whole]{bxcjkjatype}
\usepackage{titlesec}
\usepackage[nottoc]{tocbibind}

\usepackage{lipsum}

\setcounter{secnumdepth}{4}


\definecolor{YK}{rgb}{9,0,0}
\definecolor{YKb}{rgb}{0,0,9}


 \date{\vspace{-4ex}}

\begin{document}



\setlength{\parindent}{5mm}
\renewcommand{\leq}{\leqslant}
\renewcommand{\geq}{\geqslant}
\newcommand{\N}{\mathbb{N}}
\newcommand{\sph}{\mathbb{S}}
\newcommand{\Z}{\mathbb{Z}}
\newcommand{\R}{\mathbb{R}}
\newcommand{\Q}{\mathbb{Q}}
\newcommand{\C}{\mathbb{C}}
\newcommand{\curlL}{\mathcal{L}}
\newcommand{\A}{\mathcal{A}}
\newcommand{\curlP}{\mathcal{P}}
\newcommand{\F}{\mathbb{F}}
\newcommand{\g}{\mathfrak{g}}
\newcommand{\h}{\mathfrak{h}}
\newcommand{\K}{\mathbb{K}}
\newcommand{\RN}{\mathbb{R}^{2n}}
\newcommand{\ci}{c^{\infty}}
\newcommand{\derive}[2]{\frac{\partial{#1}}{\partial{#2}}}
\renewcommand{\S}{\mathbb{S}}
\renewcommand{\H}{\mathbb{H}}
\newcommand{\eps}{\varepsilon}
\newcommand{\lag}{Lagrangian}
\newcommand{\sub}{submanifold}
\newcommand{\homo}{homogeneous}
\newcommand{\qmor}{quasimorphism}
\newcommand{\enum}{enumerate}
\newcommand{\sa}{symplectically aspherical}
\newcommand{\ovl}{\overline}
\newcommand{\wt}{\widetilde}
\newcommand{\hamd}{Hamiltonian diffeomorphism}
\newcommand{\QH}{quantum cohomology ring}
\newcommand{\thm}{Theorem}
\newcommand{\cor}{Corollary}
\newcommand{\hamil}{Hamiltonian}
\newcommand{\propo}{Proposition}
\newcommand{\conjec}{Conjecture}
\newcommand{\asympt}{asymptotic}
\newcommand{\PR}{pseudo-rotation}
\newcommand{\fuk}{\mathscr{F}}
\newcommand{\specinv}{spectral invariant}
\newcommand{\wrt}{with respect to}
\newcommand{\suphv}{superheavy}
\newcommand{\suphvness}{superheaviness}
\newcommand{\symp}{symplectic}
\newcommand\oast{\stackMath\mathbin{\stackinset{c}{0ex}{c}{0ex}{\ast}{\bigcirc}}}
\newcommand{\diffeo}{diffeomorphism}
\newcommand{\quasi}{quasimorphism}
\newcommand{\univham}{\widetilde{\Ham}}
\newcommand{\BK}{Biran--Khanevsky}
\newcommand{\BC}{Biran--Cornea}
\newcommand{\TV}{Tonkonog--Varolgunes}
\newcommand{\varol}{Varolgunes}
\newcommand{\mfd}{manifold}
\newcommand{\smfd}{submanifold}
\newcommand{\EP}{Entov--Polterovich}
\newcommand{\pol}{Polterovich}
\newcommand{\suppot}{superpotential}
\newcommand{\CY}{Calabi--Yau}
\newcommand{\degen}{degenerate}
\newcommand{\coeff}{coefficient}
\newcommand{\acs}{almost complex structure}
\newcommand{\prl}{pearl trajectory}
\newcommand{\prls}{pearl trajectories}
\newcommand{\holo}{holomorphic}
\newcommand{\trans}{transversality}
\newcommand{\pert}{perturbation}
\newcommand{\idem}{idempotent}
\newcommand{\critpt}{critical point}
\newcommand{\tordeg}{toric degeneration}
\newcommand{\nbhd}{neighborhood}
\newcommand{\polar}{polarization}
\newcommand{\symplecto}{symplectomorphism}
\newcommand{\FOOO}{Fukaya--Oh--Ohta--Ono}
\newcommand{\quadr}{quadric hypersurface}
\newcommand{\monoconst}{monotonicity constant}
\newcommand{\constr}{construction}
\newcommand{\coord}{coordinate}
\newcommand{\decomp}{decomposition}
\newcommand{\hypsurf}{hypersurface}
\newcommand{\NNU}{Nishinou--Nohara--Ueda}
\newcommand{\degtion}{degeneration}
\newcommand{\config}{configuration}
\newcommand{\sing}{singular}
\newcommand{\lagsph}{Lagrangian sphere}

\theoremstyle{plain}
\newtheorem{theo}{Theorem}
\newtheorem{theox}{Theorem}
\renewcommand{\thetheox}{\Alph{theox}}
\numberwithin{theo}{subsection}
\newtheorem{prop}[theo]{Proposition}
\newtheorem{lemma}[theo]{Lemma}
\newtheorem{definition}[theo]{Definition}
\newtheorem*{notation*}{Notation}
\newtheorem*{notations*}{Notations}
\newtheorem{corol}[theo]{Corollary}
\newtheorem{conj}[theo]{Conjecture}
\newtheorem{guess}[theo]{Guess}
\newtheorem{claim}[theo]{Claim}
\newtheorem{question}[theo]{Question}
\newtheorem{prob}[theo]{Problem}
\numberwithin{equation}{subsection}

\newenvironment{demo}[1][]{\addvspace{8mm} \emph{Proof #1.
    ~~}}{~~~$\Box$\bigskip}

\newlength{\espaceavantspecialthm}
\newlength{\espaceapresspecialthm}
\setlength{\espaceavantspecialthm}{\topsep} \setlength{\espaceapresspecialthm}{\topsep}

\newenvironment{example}[1][]{\refstepcounter{theo} 
\vskip \espaceavantspecialthm \noindent \textsc{Example~\thetheo
#1.} }%
{\vskip \espaceapresspecialthm}

\newenvironment{remark}[1][]{\refstepcounter{theo} 
\vskip \espaceavantspecialthm \noindent \textsc{Remark~\thetheo
#1.} }%
{\vskip \espaceapresspecialthm}

\def\bb#1{\mathbb{#1}} \def\m#1{\mathcal{#1}}

\def\momeg{(M,\omega)}
\def\co{\colon\thinspace}
\def\Homeo{\mathrm{Homeo}}
\def\Diffeo{\mathrm{Diffeo}}
\def\Symp{\mathrm{Symp}}
\def\Sympeo{\mathrm{Sympeo}}
\def\id{\mathrm{id}}
\newcommand{\norm}[1]{||#1||}
\def\Ham{\mathrm{Ham}}
\def\lagham#1{\mathcal{L}^\mathrm{\Ham}({#1})}
\def\Hamtilde{\widetilde{\mathrm{\Ham}}}
\def\cOlag#1{\mathrm{Sympeo}({#1})}
\def\Crit{\mathrm{Crit}}
\def\diag{\mathrm{diag}}
\def\Spec{\mathrm{Spec}}
\def\osc{\mathrm{osc}}
\def\Cal{\mathrm{Cal}}
\def\Ker{\mathrm{Ker}}
\def\Hom{\mathrm{Hom}}
\def\FS{\mathrm{FS}}
\def\tor{\mathrm{tor}}
\def\Int{\mathrm{Int}}
\def\PD{\mathrm{PD}}
\def\Spec{\mathrm{Spec}}
\def\momeg{(M,\omega)}
\def\co{\colon\thinspace}
\def\Homeo{\mathrm{Homeo}}
\def\Hameo{\mathrm{Hameo}}
\def\Diffeo{\mathrm{Diffeo}}
\def\Symp{\mathrm{Symp}}
\def\Sympeo{\mathrm{Sympeo}}
\def\id{\mathrm{id}}
\def\Im{\mathrm{Im}}
\def\Ham{\mathrm{Ham}}
\def\lagham#1{\mathcal{L}^\mathrm{Ham}({#1})}
\def\Hamtilde{\widetilde{\mathrm{Ham}}}
\def\cOlag#1{\mathrm{Sympeo}({#1})}
\def\Crit{\mathrm{Crit}}
\def\dim{\mathrm{dim}}
\def\Spec{\mathrm{Spec}}
\def\osc{\mathrm{osc}}
\def\Cal{\mathrm{Cal}}
\def\Fix{\mathrm{Fix}}
\def\det{\mathrm{det}}
\def\Ker{\mathrm{Ker}}
\def\coker{\mathrm{coker}}
\def\Per{\mathrm{Per}}
\def\rank{\mathrm{rank}}
\def\Span{\mathrm{Span}}
\def\Supp{\mathrm{Supp}}
\def\Hof{\mathrm{Hof}}
\def\grad{\mathrm{grad}}
\def\ind{\mathrm{ind}}
\def\Hor{\mathrm{Hor}}
\def\Vert{\mathrm{Vert}}
\def\Re{\mathrm{Re}}
\def\van{\mathrm{van}}

\title{Hofer geometry via toric degeneration}
\author{Yusuke Kawamoto}

\newcommand{\Addresses}{{
  \bigskip
  \footnotesize

   \textsc{Yusuke Kawamoto, Institute for Mathematical Research (FIM), R\"amistrasse 101, 8092 Z\"urich Switzerland}\par\nopagebreak
  \textit{E-mail address}: \texttt{yusukekawamoto81@gmail.com, yusuke.kawamoto@math.ethz.ch} }}

\maketitle

\begin{abstract}

The main theme of this paper is to use {\tordeg} to study Hofer geometry by producing distinct {\homo} {\qmor}s on the group of {\hamil} {\diffeo}s. We focus on the (complex $n$-dimensional) quadric hypersurface and the del Pezzo surfaces, and study two classes of distinguished {\lag} sub{\mfd}s that appear naturally in a {\tordeg}, namely the {\lag} torus which is the monotone fiber of a {\lag} torus fibration, and the {\lag} spheres that appear as vanishing cycles. For the quadrics, we prove that the group of {\hamil} {\diffeo}s admits two distinct {\homo} {\qmor}s and derive some superheaviness results. Along the way, we show that the {\tordeg} is compatible with the Biran decomposition. This implies that for $n=2$, the {\lag} fiber torus (Gelfand--Zeitlin torus) is {\hamil} isotopic to the Chekanov torus, which answers a question of Y. Kim. We prove analogous results for the del Pezzo surfaces. We also discuss applications to $C^0$ {\symp} topology.
\end{abstract}

\tableofcontents

\section{Introduction and overview of the results}

\subsection{Hofer geometry}\label{intro hofer}
The main theme of this paper is to use {\tordeg} to study Hofer geometry by producing distinct {\homo} {\qmor}s on the group of {\hamil} {\diffeo}s. The set of {\hamil} {\diffeo}s of a closed {\symp} {\mfd} $X=(X,\omega)$, denoted by $\Ham(X)$ (as well as its universal cover $\wt{\Ham} (X)$) forms a group and moreover, it has a remarkable bi-invariant metric called the Hofer metric \cite{[Hof93]}. The geometry of $\Ham(X)$ (and $\wt{\Ham} (X)$) {\wrt} the Hofer metric has been an active and an important research topic, e.g. \cite{[Pol01]}, which is now called the Hofer geometry. One of the first important questions in this topic was the so-called \textit{Hofer diameter conjecture} which conjectures that the diameter of $\Ham(X)$ (and $\wt{\Ham} (X)$) {\wrt} the Hofer metric is infinity. Many fragmented cases were solved by various {\symp} geometers before a major step forward was made by {\EP} in \cite{[EP03]}, where they introduced the method of \textit{{\qmor}s} to the study of Hofer geometry. This provided a systematic way to solve the Hofer diameter conjecture for a sufficiently large class of {\symp} {\mfd}s by means of Floer theory. However, beyond the Hofer diameter conjecture, not much is known about the Hofer geometry. For example, consider the following question, known as the \textit{Kapovich--{\pol} question}:

\begin{question}[Kapovich--{\pol} question\footnote{The original question was for the case $X=S^2$.}]\label{extended Q of KP}
For a closed {\symp} {\mfd} $X$, is the group $\Ham(X)$ quasi-isometric to the real line $\R$ {\wrt} the Hofer metric?
\end{question}

A negative answer to the Kapovich--{\pol} question will immediately solve the Hofer diameter conjecture, but extremely little is known about it. At the time of writing, the only cases for which Question \ref{extended Q of KP} has been answered (in the negative) are the cases of {\symp} {\mfd}s that satisfy some dynamical condition \cite{Ush13}, $X=S^2 \times S^2$ {\FOOO} \cite{[FOOO19]} (see also \cite{[EliPol]}), and $X=S^2$, which was done in 2021, independently by Cristofaro-Gardiner--Humili\`ere--Seyfaddini \cite{[CGHS]} and {\pol}--Shelukhin \cite{[PS]}. Other than that, even for the weaker version of Question \ref{extended Q of KP} where $\Ham(X)$ is replaced by $\wt{\Ham}(X)$, the question remains widely open.

This paper attempts to use {\tordeg} to obtain new insights about the Hofer geometry, especially in (real) dimension higher than $2$. We focus on the quadric hypersurface
$$Q^n:=\{[z_0:z_1:\cdots :z_{n+1}] \in \C P^{n+1} \colon z_0 ^2 + z_1 ^2 + \cdots+ z_{n+1} ^2 =0  \} $$
and the {\symp} del Pezzo surfaces, and study two distinguished classes of {\lag} sub{\mfd}s that appear naturally in a {\tordeg}, namely the {\lag} torus and the {\lag} spheres that appear as the monotone fiber of a {\lag} torus fibration and vanishing cycles, respectively.

\subsection{Main results for quadrics}\label{main thms quadric}

We first state the results for the quadric hypersurfaces.

\begin{theox}\label{main qmor}
The two {\EP} type {\homo} {\qmor}s 
$$\zeta_{\pm}: \wt{\Ham}(Q^n) \to \R$$
are distinct, i.e. 
$$  \zeta_+ \neq \zeta_- .$$
\end{theox}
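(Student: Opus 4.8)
The plan is to exhibit two Lagrangian submanifolds of $Q^n$ ($n\geq 2$) which are disjoint, possibly after a Hamiltonian isotopy, and such that each is superheavy with respect to one of the two partial symplectic quasi-states attached to $e_+$ and $e_-$. Since two subsets that are superheavy with respect to the \emph{same} partial quasi-state necessarily intersect, this forces the two quasi-states, hence the two quasimorphisms $\zeta_\pm$, to be distinct. The two Lagrangians are exactly the two families highlighted above: the vanishing-cycle Lagrangian sphere $L\cong\sph^n\subset Q^n$, which is also the Lagrangian skeleton of the Biran decomposition of $Q^n$ along a hyperplane section $Q^{n-1}$ (note that $Q^n\setminus Q^{n-1}$ is an affine quadric deformation retracting onto a real $\sph^n$), and the monotone Lagrangian torus fiber $T\subset Q^n$ produced by the toric degeneration (the Gelfand--Zeitlin torus).

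\textbf{Floer input and superheaviness.} First I would pin down the Floer theory of the two Lagrangians and, crucially, which idempotent each one detects under the closed--open map. For $L\cong\sph^n$ one invokes that its self-Floer homology is nontrivial for a suitable local system---this is classical for quadrics and is essentially what produces the splitting $QH_\ast(Q^n;\Lambda)=R_+\times R_-$ into two field summands---so that $L$ detects one of the idempotents, say $e_+$; superheaviness of $L$ with respect to $\zeta_{e_+}$ then follows from the Biran--\EP\ principle that the Lagrangian skeleton of a Biran decomposition is superheavy with respect to the quasi-states it detects. For $T$ one uses the compatibility of the toric degeneration with the Biran decomposition established earlier: this identifies $T$, up to Hamiltonian isotopy, with the Biran circle-bundle lift of a monotone Lagrangian in $Q^{n-1}$, and thereby makes the superpotential of $T$ computable---inductively in $n$, or directly via the known Gelfand--Zeitlin / \NNU\ computations. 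From the critical points of that superpotential one reads off that $T$ detects the remaining idempotent $e_-$, and a \FOOO\,/\,\EP-type argument---showing that $\zeta_{e_-}$ is as small as possible on Hamiltonians supported away from $T$---upgrades heaviness of $T$ to superheaviness with respect to $\zeta_{e_-}$. For the final contradiction it in fact suffices to know that each of $L$ and $T$ is superheavy with respect to \emph{some} member of $\{\zeta_{e_+},\zeta_{e_-}\}$; identifying which is which matters only if $QH_\ast(Q^n;\Lambda)$ carries further idempotents, in which case one must additionally rule out that $L$ or $T$ detects a third one.

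\textbf{Disjointness and conclusion.} Next I would verify $L\cap T=\emptyset$: in the Biran decomposition, $Q^n$ is the union of a neighbourhood of the skeleton $L$ and a disc bundle over the divisor $Q^{n-1}$, so $L$ sits in the core while $T$, being a Biran lift, lies in an arbitrarily small neighbourhood of $Q^{n-1}$, whence they are disjoint; for $n=2$ this is the statement that the Chekanov torus can be displaced off the (anti-)diagonal Lagrangian sphere. Finally, choose $H\in C^\infty(Q^n)$ with $H\equiv 1$ near $L$ and $H\equiv 0$ near $T$: were $\zeta_+=\zeta_-$, the common partial quasi-state $\zeta$ would satisfy simultaneously $\zeta(H)\in[\inf_L H,\sup_L H]=\{1\}$ and $\zeta(H)\in[\inf_T H,\sup_T H]=\{0\}$, which is absurd. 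Translating back from partial quasi-states to the homogeneous quasimorphisms on $\univham(Q^n)$ (via autonomous flows) then yields $\zeta_+\neq\zeta_-$.

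\textbf{Main obstacle.} The crux is the Floer-theoretic step for $T$: computing its superpotential and, above all, promoting heaviness to genuine superheaviness---$T$ is \emph{not} a stem, since $Q^n$ has non-displaceable Lagrangians disjoint from it, so superheaviness does not come for free. This is precisely where the compatibility of the toric degeneration with the Biran decomposition earns its keep, reducing the computation on $Q^n$ to one on $Q^{n-1}$ (or to the toric model) where the superpotential and the idempotent detected are within reach. A secondary technical point is to arrange the Biran decomposition of $Q^n$ so as to secure simultaneously the superheaviness of the skeleton $L$ and its disjointness from the Biran lift $T$; the remaining ingredients---non-vanishing of $HF(L)$ for quadrics and the quasi-state endgame---are standard.
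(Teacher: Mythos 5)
Your overall architecture coincides with the paper's: exhibit the vanishing sphere $S^n_{\text{van}}$ and the monotone Gelfand--Zeitlin torus $T^n_{\text{GZ}}$ as disjoint Lagrangians, each superheavy for one of the two idempotents of $QH(Q^n;\Lambda_{\text{Lau}})$, and then separate $\zeta_+$ from $\zeta_-$ by a time-independent Hamiltonian that is constant on each of them. One difference of route: the paper's disjointness does not pass through the Biran decomposition at all, but through the degeneration flow $\phi_1$ itself ($S^n_{\text{van}}$ is carried into the singular locus of $X_0$, while $T^n_{\text{GZ}}$ is carried onto a regular toric fiber); your route would in addition require identifying $S^n_{\text{van}}$ with the skeleton of the polarization $(Q^n,Q^{n-1})$ and invokes the compatibility statement (Theorem \ref{lags in Qn}(1)), which the paper only needs for Corollary \ref{ans Kim's question}.

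The genuine gaps are in the superheaviness step. First, there is no off-the-shelf ``Biran--Entov--Polterovich principle'' asserting that the skeleton of a Biran decomposition is superheavy for the idempotent it detects; the paper instead proves heaviness of both Lagrangians via the Fukaya--Oh--Ohta--Ono closed--open criterion (Theorem \ref{CO map heavy}), feeding in Kim's superpotential for $T^n_{\text{GZ}}$ (critical values $n\xi$ with $\xi^n=4$) and the Auroux--Kontsevich--Seidel eigenvalue decomposition of $QH(Q^n;\Lambda_{\text{Nov}})$ under $c_1\ast-$, which shows that the torus detects the nonzero eigenvalues and the sphere detects $0$ (so your labels are in fact swapped: the torus is $\zeta_+$-superheavy, the sphere $\zeta_-$-superheavy). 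Second, the caveat you mention only in passing --- ``further idempotents'' --- is exactly the situation at hand: the AKS machinery forces you to work over the algebraically closed field $\Lambda_{\text{Nov}}$, where $QH(Q^n)$ splits into $n{+}1$ or $n{+}2$ field factors, whereas $\zeta_\pm$ are defined over $\Lambda_{\text{Lau}}$ with only two. Bridging this requires the comparison of asymptotic spectral invariants over different coefficient fields ($\zeta_{e_+}=\zeta_{e_{+,i}}$, $\zeta_{e_-}=\zeta_{e_0}$) imported from \cite{[Kaw22A]} (Theorem \ref{comparison lemma}), which occupies Section \ref{nov and lau} of the paper and is absent from your plan; note moreover that for $n$ even $e_0$ is not a field-factor unit over $\Lambda_{\text{Nov}}$, so the heavy-to-superheavy upgrade for the sphere only becomes automatic after this transfer to $e_-$. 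Conversely, the obstacle you single out --- promoting heaviness of $T$ to superheaviness because it is ``not a stem'' --- is not where the difficulty lies: the relevant idempotents are units of field factors, their quasi-states are homogeneous, and heaviness equals superheaviness for free.
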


This immediately answers the $\wt{\Ham}(X)$ version of Question \ref{extended Q of KP} in the negative for $X=Q^n$. Theorem \ref{main qmor} generalizes results of Eliashberg--Polterovich from \cite{[EliPol]} and the author from \cite{[Kaw22A]}, where the $n=2$-case and the $n=4$-case were proven, respectively.

\subsection{{\lag}s in {\quadr}s}

In the proof of Theorem \ref{main qmor}, producing disjoint {\lag} sub{\mfd}s is a key step and this is where the {\tordeg} plays a crucial role. Recall that roughly speaking, a {\tordeg} for a {\symp} {\mfd} $X$ is a way to deform $X$ into a toric variety $X_0$ which allows one to study the {\symp} geometry of $X$ via the toric geometry of $X_0$ (Section \ref{Toric degeneration}). There are two distinguished subsets for {\tordeg}s. The first is the monotone {\lag} fiber torus; given a {\tordeg}, one obtains a {\lag} torus fibration for $X$ and there is a unique fiber for which the {\lag} fiber torus becomes monotone. The second is the vanishing locus, which is the set obtained by tracing back the points that get crushed into the singular locus of the toric variety $X_0$ in the process of the {\tordeg}. Note that the vanishing locus is not necessarily a geometrically nice set, e.g. a sub{\mfd}, even though in practice it often turns out to be a {\lag} cell-complex.

As for the {\quadr} $Q^n$, the {\tordeg}
\begin{equation}\label{tordeg of our interest}
  \mathfrak{X}:=\{(z,t)   \in \C P^{n+1} \times \C : z_0 ^2 + z_1 ^2 +z_2 ^2 +t(z_3 ^2+ \cdots+ z_{n+1} ^2) =0  \}  
\end{equation}
was introduced by {\NNU} in \cite{[NNU]} which is compatible with the celebrated Gelfand--Zeitlin (GZ) system on $Q^n$. The GZ system defines a {\lag} torus fibration on $Q^n$ and there is one monotone torus fiber which we call the GZ torus and denote it by $T^n _{\text{GZ}}$.

The $n=2$ case deserves special attention, as it is well-known that $Q^2$ is symplectomorphic to the monotone $S^2 \times S^2$. Through this {\symplecto}, we can see the monotone GZ fiber torus $T^2 _{\text{GZ}}$ as a monotone {\lag} torus in $S^2 \times S^2$. In \cite{[KimA]}, Yoosik Kim computed the {\suppot} for the monotone GZ fiber torus $T^n _{\text{GZ}}$ and found that for $n=2$, it agrees with the {\suppot} for the celebrated Chekanov torus in  $S^2 \times S^2$. This lead him to ask the following question:

\begin{question}[{\cite[Section 4.2]{[KimA]}}]\label{Kim's question}
In the monotone $S^2 \times S^2$, is the monotone GZ fiber torus $T^{2} _{\text{GZ}}$ {\hamil} isotopic to the Chekanov torus $T^2 _{\text{Ch}}$?
\end{question}

We study some geometric properties of the distinguished monotone {\lag} sub{\mfd}s $T^n _{\text{GZ}}$ and $S^n $ in $Q^n$. Our second main result is the following.

\begin{theox}\label{lags in Qn}
The monotone {\lag} torus fiber $T^n _{\text{GZ}}$ in $Q^n$ satisfies the following properties:
\begin{enumerate}
    \item Consider the {\polar} $(X,\Sigma)=(Q^n,Q^{n-1})$. The monotone GZ fiber $T^n _{\text{GZ}}$ in $Q^n$ coincides with the distinguished monotone {\lag} torus obtained from the Biran circle bundle construction from $T^{n-1} _{\text{GZ}}$ in $Q^{n-1}$, i.e.
    $$T^n _{\text{GZ}} = \wt{T^{n-1} _{\text{GZ}}} .$$

    \item The real {\lag} sphere
    $$ S_{\van} ^n := \{z  \in \C P^{n+1}: z_0 ^2 + \cdots+z_{n} ^2+ z_{n+1} ^2 =0 ,\ z_0,\cdots,z_n \in \R,\ z_{n+1} \in i \R \} $$
    is a vanishing locus of the {\tordeg} \eqref{tordeg of our interest}, and it is disjoint from $T^n _{\text{GZ}}$, i.e.
    $$ S_{\van} ^n  \cap T^n _{\text{GZ}} = \emptyset .$$

    \item The Gelfand--Zeitlin torus $T^n _{\text{GZ}}$ is $\zeta_+$-{\suphv} and the vanishing cycle $S_{\van} ^n $ is $\zeta_-$-{\suphv}.
\end{enumerate}
\end{theox}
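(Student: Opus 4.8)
The plan is to prove the three items in the order stated, the first being the technical core. For (1), I would set up the \NNU\ {\tordeg}~\ref{tordeg of our interest} and the Biran decomposition of the {\polar} $(Q^n,Q^{n-1})$ in a common coordinate system and show that the two structures are compatible. Recall that $Q^n\setminus Q^{n-1}$ is symplectomorphic to the affine quadric, i.e.\ to $T^*S^n$, whose skeleton is a {\lagsph}, and that the Biran circle bundle construction sends a monotone {\lag} torus $L\subset Q^{n-1}$ to the monotone {\lag} torus $\wt{L}\subset Q^n$ lying at a fixed radius in the normal circle bundle of $Q^{n-1}$. On the GZ side, the point is that one of the GZ Hamiltonians can be arranged to generate the circle action rotating the normal directions of $Q^{n-1}$, so that its level sets are exactly the circle bundles of the Biran decomposition and the GZ system of $Q^n$ restricted to such a level set descends to the GZ system of $Q^{n-1}$. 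Tracking the monotone fiber through this identification --- its image under the bundle projection is the monotone GZ fiber of $Q^{n-1}$, and its radius is the monotone one --- then yields $T^n _{\text{GZ}} = \wt{T^{n-1} _{\text{GZ}}}$. I would organize this as an induction on $n$, the inductive step being precisely the statement that the {\tordeg} is compatible with the Biran decomposition, with a low-dimensional base case checked by hand.

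Granting (1), item (2) is essentially formal: under the identification $Q^n\setminus Q^{n-1}\cong T^*S^n$ the vanishing {\lagsph} $S^n _{\text{van}}$ is (Hamiltonian isotopic to) the zero-section, hence the Biran skeleton, whereas $\wt{T^{n-1} _{\text{GZ}}}$ sits at positive radius in the normal disk bundle of $Q^{n-1}$ and is therefore disjoint from both $Q^{n-1}$ and the skeleton; combined with (1) this gives $S^n _{\text{van}}\cap T^n _{\text{GZ}}=\emptyset$. A cleaner variant, which yields honest disjointness without worrying about the isotopy, is to argue directly with the GZ moment map: $T^n _{\text{GZ}}$ is the fiber over the monotone point in the interior of the GZ polytope, while $S^n _{\text{van}}$ lies over the locus of non-toric fibers, and these two loci are disjoint, hence so are the fibers.

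For (3), the plan is to invoke the standard {\suphvness} criterion in the monotone setting (following \EP\ and \FOOO): a monotone {\lag} $L$ whose Floer homology $HF(L,\rho)$ with some rank-one local system $\rho$ is nonzero is $\zeta_e$-{\suphv} for the idempotent $e\in QH_*(Q^n)$ not annihilated by the closed--open map $QH_*(Q^n)\to HF(L,\rho)$; concretely, $e$ is the spectral idempotent whose eigenvalue under quantum multiplication by $c_1$ equals the critical value of the disk {\suppot} $W_L$ at $\rho$. Since $QH_*(Q^n)$ is, over suitable coefficients, a product of two fields --- which is exactly why $\zeta_\pm$ exist --- each such $L$ is superheavy for exactly one of $e_\pm$. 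For $L=T^n _{\text{GZ}}$, Kim's computation of $W_{T^n _{\text{GZ}}}$ in \cite{[KimA]} provides a critical point, so $HF(T^n _{\text{GZ}},\rho)\neq 0$, and comparing its critical value with the two $c_1$-eigenvalues of $QH_*(Q^n)$ identifies the idempotent as $e_+$; alternatively one propagates {\suphvness} through the Biran circle bundle construction of (1) by induction, using \BC-type results on the Floer theory of $\wt{L}$. For $L=S^n _{\text{van}}$, the vanishing {\lagsph} is unobstructed with $HF(S^n _{\text{van}},S^n _{\text{van}})\neq 0$, and as a Lefschetz vanishing cycle its eigenvalue is the remaining critical value; that this is the \emph{other} eigenvalue rather than the one carried by $T^n _{\text{GZ}}$ is what the computation genuinely establishes, and it shows $S^n _{\text{van}}$ is $\zeta_-$-{\suphv}.

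The main obstacle is item (1): placing the \NNU\ GZ picture and the Biran decomposition of $(Q^n,Q^{n-1})$ in the same coordinates, recognizing the GZ Hamiltonian that generates the Biran normal rotation, and bookkeeping the monotonicity constants so that the monotone $Q^n$-fiber is literally $\wt{T^{n-1} _{\text{GZ}}}$ rather than merely a Hamiltonian-isotopic torus. Once (1) is available, (2) is immediate and (3) reduces to the {\suppot}/eigenvalue comparison, whose only remaining subtlety is the convention pinning down which of $e_\pm$ is paired with which {\lag}.
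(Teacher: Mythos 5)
Your items (1) and (2) are essentially on the paper's track. For (1) you propose exactly the mechanism the paper uses: in the Oakley--Usher coordinates for the Biran decomposition of $(Q^n,Q^{n-1})$, the last GZ Hamiltonian $\lambda^{(n+1)}_{Q^n}$ becomes a function of the fiber radius alone (so its level sets are the Biran circle bundles), its monotone level $|\zeta|^2/2=1/n$ is precisely the monotone radius $r_0$, and the remaining GZ Hamiltonians restrict to $\left(1-\tfrac{|\zeta|^2}{2}\right)\lambda^{(k)}_{Q^{n-1}}(w)$, so the monotone fiber conditions in $Q^n$ and $Q^{n-1}$ match; no induction is needed, and the only work is the bookkeeping you flag. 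For (2), your first variant (identifying $S^n_{\text{van}}$ with the skeleton only up to Hamiltonian isotopy) indeed does not yield disjointness, as you acknowledge; your moment-map variant does work, but the assertion that $S^n_{\text{van}}$ lies over the non-toric locus still needs justification (in fact every GZ Hamiltonian vanishes on the real locus, so its moment image is the origin, away from the monotone interior point). The paper argues differently: the anti-symplectic involution is preserved by the gradient--Hamiltonian flow, so $\phi_1(S^n_{\text{van}})\subset X_0^{\text{sing}}$, while $\phi_1(T^n_{\text{GZ}})$ is a toric fiber in the smooth locus; disjointness follows.

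The genuine gap is in (3), in the passage from the closed--open/eigenvalue criterion to the quasimorphisms $\zeta_\pm$ of the statement. Theorem \ref{AKS} and the generalized eigenspace decomposition for $c_1\ast-$ must be run over the universal Novikov field (it is algebraically closed; the eigenvalues $n\cdot\xi_j$ with $\xi^n=4$ do not even lie in the Laurent field), and over $\Lambda_{\text{Nov}}$ the quantum cohomology of $Q^n$ does \emph{not} split into two fields: the $+$-part splits into $n$ fields, and for $n$ even the $0$-eigenspace splits into two. Consequently Theorem \ref{CO map heavy} only gives that $T^n_{\text{GZ}}$ is $\zeta_{e_{n\cdot\xi_j}}$-heavy and $S^n_{\text{van}}$ is $\zeta_{e_0}$-heavy, where for $n$ even $e_0$ is not the unit of a field factor, so there is no automatic homogeneity and no direct upgrade from heavy to superheavy. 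The quasimorphisms $\zeta_\pm$ in the statement are the Entov--Polterovich quasimorphisms attached to the Laurent-coefficient idempotents $e_\pm$; bridging the two splittings requires the comparison of asymptotic spectral invariants across coefficient fields, $\zeta_{e_+}=\zeta_{e_{n\cdot\xi_i}}$ and $\zeta_{e_-}=\zeta_{e_0}$ (Theorem \ref{comparison lemma}, imported from the author's earlier work), after which homogeneity of $\zeta_{e_-}$ upgrades heaviness of $S^n_{\text{van}}$ to superheaviness. Your phrase ``over suitable coefficients, a product of two fields'' conflates these two decompositions and skips exactly this step, which the paper singles out as the essential input beyond the mirror-symmetry ingredients (Kim's superpotential and the closed--open map) that you do invoke correctly.
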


 The third assertion of Theorem \ref{lags in Qn} generalizes a result of Eliashberg--Polterovich from \cite{[EliPol]} where the $n=2$ case was proven.

\begin{remark}
\begin{enumerate}

\item  For details about {\polar}s and the Biran circle bundle construction, see Section \ref{Biran {\decomp}}.

\item The {\sing} toric variety which is the central fiber of the {\tordeg} \eqref{tordeg of our interest} has non-isolated {\sing} locus. This makes it more difficult to understand the geometry of the vanishing locus, c.f. the second statement in {\thm} \ref{lags in Qn}, compared to the cases for del Pezzo surfaces that we consider later, where the {\sing} loci are rational double points ($A_m$-{\sing}ities).

    \item In \cite{[Kaw22A]}, it was proven that $\zeta_+ \neq \zeta_-$ for $n=2,4$ but nothing about the {\suphvness} was proved. The new input is the use of some results from the mirror symmetry literature, e.g. the Auroux--Kontsevich--Seidel theorem, combined with the comparison technique of {\specinv}s with different {\coeff}s developed in \cite{[Kaw22A]}; see Sections \ref{proof-part 1} and \ref{nov and lau} for more on this.

    \item In \cite{[Kaw22A]}, finding disjoint {\lag}s was also a key step, but in this paper, we focus on a different geometric structure and use different {\lag}s; e.g. for $n=4$, \cite{[Kaw22A]} focused on a {\lag} diffeomorphic to $S^1 \times S^3$, which is different from the sphere we consider in {\thm} \ref{lags in Qn}.

    \item Theorem \ref{lags in Qn} has other applications; it plays an important role in \cite{[KawA]}.
\end{enumerate}
\end{remark}

The $n=2$ case of the first assertion in Theorem \ref{lags in Qn} has the following corollary, which answers the aforementioned Question \ref{Kim's question} of Yoosik Kim.

\begin{corol}[Kim's question \ref{Kim's question}]\label{ans Kim's question}
In the monotone $S^2 \times S^2$, the monotone GZ fiber $T^{2} _{\text{GZ}}$ is {\hamil} isotopic to the Chekanov torus $T^2 _{\text{Ch}}$.
\end{corol}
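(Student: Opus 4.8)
The plan is to deduce this corollary from the $n=2$ case of {\thm}~\ref{lags in Qn}(1), which identifies $T^2_{\text{GZ}}$, up to {\hamil} isotopy, with a Biran circle bundle lift. Applying that assertion to the {\polar} $(Q^2,Q^1)$, the monotone GZ torus $T^2_{\text{GZ}}$ is {\hamil} isotopic to $\wt{T^1_{\text{GZ}}}$, the Biran circle bundle lift of the monotone {\lag} circle $T^1_{\text{GZ}}\subset Q^1$. Now $Q^1\cong\C P^1\cong S^2$, the Gelfand--Zeitlin system on $Q^1$ is just the standard height function, and its monotone fiber $T^1_{\text{GZ}}$ is the equator, i.e.\ the unique monotone {\lag} circle in $S^2$ up to {\hamil} isotopy. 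Under a {\symplecto} $Q^2\cong S^2\times S^2$ with the monotone form, the hyperplane section $Q^1$ is a smooth rational curve of bidegree $(1,1)$ with self-intersection $2$, hence {\hamil} isotopic to the {\holo} diagonal $\Delta$; so the {\polar} $(Q^2,Q^1)$ is carried to the {\polar} $(S^2\times S^2,\Delta)$ given by the diagonal sphere. Thus this corollary reduces to the statement that the Biran circle bundle lift of the equator of $\Delta\cong S^2$, taken inside $(S^2\times S^2,\Delta)$, is {\hamil} isotopic to the Chekanov torus $T^2_{\text{Ch}}$.

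For this reduced statement I would first fix normalizations and check that the Euler number $[\Delta]^2=2$ of the normal bundle of $\Delta$, together with the {\monoconst}, agree with the data entering the Biran {\constr}, so that $\wt{T^1_{\text{GZ}}}$ is literally the standard monotone Lagrangian circle bundle over an equator. I would then invoke the known description of the Chekanov torus as this Lagrangian circle bundle --- the $S^2\times S^2$ counterpart of the description of the Chekanov torus in $\C P^2$ as the Biran lift of $\R P^1\subset\{\text{conic}\}$ --- which follows from the {\BK} picture of Lagrangian circle bundles \cite{[BK]} upon recognizing $\wt{T^1_{\text{GZ}}}$ as the monotone twist torus; if a direct reference is lacking, the same identification can be obtained by comparing explicit models of the two tori. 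As a consistency check one can, independently, compute the Landau--Ginzburg {\suppot} of $\wt{T^1_{\text{GZ}}}$ from the circle bundle {\constr} (equivalently, from the count of Maslov-index-$2$ {\holo} disks it bounds) and verify that it is the Chekanov {\suppot}, and in particular not the Clifford one; this already shows $\wt{T^1_{\text{GZ}}}$ is neither {\hamil} nor {\lag} isotopic to the product torus, and it recovers Kim's {\suppot} computation from \cite{[KimA]}.

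The main obstacle is the bookkeeping in passing between the three pictures --- the Gelfand--Zeitlin system on $Q^2$, the Biran circle bundle structure attached to the {\polar} $(Q^2,Q^1)$, and the standard product (or almost toric) picture of $S^2\times S^2$ --- and in particular in verifying that the monotone circle $T^1_{\text{GZ}}$ and the induced {\polar} data are sent to exactly the {\config} for which the Biran-lift description of the Chekanov torus holds. Once this matching is set up, the analytic heart of the argument has already been absorbed into {\thm}~\ref{lags in Qn}(1), and what remains is a routine model comparison, supplemented by a citation for the Biran-lift description of the Chekanov torus rather than a reproof of it.
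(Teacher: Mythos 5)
Your proposal follows essentially the same route as the paper: apply Theorem \ref{lags in Qn}(1) to write $T^2_{\text{GZ}} = \wt{T^{1}_{\text{GZ}}}$, transport the {\polar} $(Q^2,Q^1)$ to $(S^2\times S^2,\Delta)$, identify $T^1_{\text{GZ}}$ with the equator of $\Delta\cong S^2$, and then quote the known identification of the Biran circle bundle lift of the equator with the Chekanov torus. The only notable difference is the citation: the paper takes this last identification from Oakley--Usher \cite{[OU16]} (their torus $T_{\mathrm{BC}}$ for the {\polar} $(S^2\times S^2,\Delta)$ is shown there to be {\hamil} isotopic to the Chekanov torus), rather than from a Biran--Khanevsky-type reference or an explicit model comparison.
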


\begin{remark}
The answer to Kim's question \ref{Kim's question}, namely {\cor} \ref{ans Kim's question}, was very recently obtained by Kim himself in \cite{[KimB]}, by a different approach. 
\end{remark}

\subsection{Main results for the del Pezzo surfaces}\label{main thms del pezzo}

In this section, we state the main results for the del Pezzo surfaces, which are analogous to results for quadrics. Recall that the del Pezzo surfaces are smooth (complex) two-dimensional Fano varieties, equipped with a monotone {\symp} form. They are classified and consist of $\C P^2$, $S^2 \times S^2(=Q^2)$, and $\mathbb{D}_k:=\C P^2 \# k \ovl{\C P^2},\ 1 \leq k \leq 8$.

\begin{theox}\label{three qmor del Pezzo}
\begin{enumerate}
    \item For $\mathbb{D}_2$, there exist two distinct {\EP} type {\homo} {\qmor}s 
$$\zeta_{j}: \wt{\Ham}(\mathbb{D}_2) \to \R ,\ j=1,2,$$
$$\zeta_{1} \neq \zeta_{2} .$$

\item For $\mathbb{D}_k,\ k=3,4$, there exist three distinct {\EP} type {\homo} {\qmor}s 
$$\zeta_{j}: \wt{\Ham}(\mathbb{D}_k) \to \R ,\ j=1,2,3,$$
$$\zeta_{i} \neq \zeta_{j} , \text{  if $i\neq j$}.$$
\end{enumerate}

\end{theox}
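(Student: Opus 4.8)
The plan is to mimic the strategy used for the quadric in Theorem \ref{main qmor}, adapted to the del Pezzo surfaces $\mathbb{D}_k$ for $k=2,3,4$. Recall that the number of linearly independent Entov--Polterovich homogeneous quasimorphisms $\zeta_e$ one can build on $\wt{\Ham}(X)$ is controlled by the number of field factors in a semisimple decomposition of the quantum homology $QH_*(X;\Lambda)$: each idempotent $e$ appearing in such a decomposition gives rise to a quasimorphism $\zeta_e$, and two such quasimorphisms associated to \emph{orthogonal} idempotents are distinct provided one can exhibit a single Hamiltonian diffeomorphism (or a Lagrangian) on which they take genuinely different values. So the first step is to recall that $QH_*(\mathbb{D}_k;\Lambda)$, with $\Lambda$ the relevant Novikov field, splits as a direct sum of fields; for the monotone del Pezzo surfaces this is known (the quantum cohomology is semisimple, and one counts the number of factors — this is where the $k$-dependence $2 \mapsto 2$, $3,4 \mapsto 3$ enters, reflecting the structure of the small quantum ring of $\C P^2 \# k\ovl{\C P^2}$). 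Each field factor yields an idempotent $e_j$ and hence a quasimorphism $\zeta_j$.

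The second and main step is \emph{separating} the quasimorphisms, i.e. proving $\zeta_i \neq \zeta_j$ for $i \neq j$. The mechanism, exactly as for the quadric, is to produce for each idempotent $e_j$ a closed subset (in practice a monotone Lagrangian: a toric/Gelfand--Zeitlin-type fiber or a vanishing-cycle Lagrangian sphere coming from the toric degeneration of $\mathbb{D}_k$) that is $\zeta_j$-superheavy, and to arrange that two such Lagrangians associated to distinct idempotents are \emph{disjoint}. Superheaviness of a Lagrangian $L$ with respect to $\zeta_{e}$ is detected by showing that the $e$-component of its Floer/quantum homology is nonzero (equivalently, that $e$ lies in the image of the quantum inclusion, or that the relevant Lagrangian Floer homology with the $e$-summand coefficient is nonvanishing), which in turn is a superpotential/critical-point computation; for del Pezzo surfaces these Lagrangian Floer computations are available in the literature (Fukaya--Oh--Ohta--Ono-type bulk-deformed computations, and the toric/almost-toric picture). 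Once $L_i$ is $\zeta_i$-superheavy, $L_j$ is $\zeta_j$-superheavy, and $L_i \cap L_j = \emptyset$, the standard Entov--Polterovich argument (two disjoint superheavy sets for orthogonal idempotents force the corresponding quasimorphisms to differ, since a superheavy set has full $\zeta$-value in a sense that is incompatible with being disjoint from another superheavy set for a different idempotent) gives $\zeta_i \neq \zeta_j$.

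Concretely, I would organize the separation as follows. First, fix the toric degeneration of $\mathbb{D}_k$ and identify the monotone Lagrangian torus fiber $T$ of the associated Lagrangian torus fibration together with the vanishing-cycle Lagrangian spheres; in the $k=3,4$ cases there is enough room to find a third distinguished Lagrangian (another monotone torus in a different almost-toric chart, or a second vanishing cycle) pairwise disjoint from the others. Second, compute, for each of these Lagrangians, which idempotent summand of $QH_*(\mathbb{D}_k;\Lambda)$ it "sees", using the superpotential and its critical points — this matches Lagrangians to idempotents. Third, invoke disjointness, which for the toric/almost-toric configurations is visible directly from the moment/base picture (distinct fibers of a fibration, or a fiber disjoint from a vanishing cycle sitting over a boundary stratum). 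Fourth, apply the Entov--Polterovich superheaviness-disjointness dichotomy to conclude distinctness.

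The hard part will be the second step: establishing the precise matching between the distinguished Lagrangians and the idempotents, i.e. the Floer-theoretic nonvanishing computations certifying $\zeta_j$-superheaviness of the right Lagrangian for each $j$. For the torus fibers this reduces to the Fukaya--Oh--Ohta--Ono superpotential calculation (possibly with bulk deformation) for the del Pezzo surface, locating a critical point in the correct Novikov-field component; for the vanishing-cycle spheres one instead uses that a Lagrangian sphere in a monotone surface has nontrivial self-Floer homology over the appropriate summand, together with the compatibility of the toric degeneration with the Biran decomposition (the analogue of Theorem \ref{lags in Qn}(1)) to pin down which summand. Assembling these inputs — some of which I would cite from the mirror-symmetry and toric-Fukaya-category literature rather than reprove — and checking that the disjointness configuration genuinely realizes all $2$ (resp. $3$) orthogonal idempotents simultaneously is the crux; the rest is the now-standard Entov--Polterovich machinery already invoked for $Q^n$.
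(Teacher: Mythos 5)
Your proposal follows essentially the same route as the paper: semisimplicity of $QH(\mathbb{D}_k;\Lambda)$, the monotone torus fiber and the vanishing-cycle Lagrangian spheres of a toric degeneration (the paper imports Y.~Sun's superpotential computations and his classification of the $A_1$/$A_2$ singularities, plus the nonvanishing self-Floer homology of Lagrangian spheres), the closed-open/superpotential matching of these pairwise disjoint Lagrangians to distinct idempotents, and the standard disjoint-superheavy-sets argument to separate the quasimorphisms. One small correction: the counts $2$ and $3$ do not come from the number of field factors of $QH(\mathbb{D}_k;\Lambda)$ (which is larger), but from the number of pairwise disjoint distinguished Lagrangians the degeneration supplies --- the torus plus one sphere (from the $A_1$ point) for $\mathbb{D}_2$, and the torus plus two spheres (from the $A_1$ and $A_2$ points) for $\mathbb{D}_3,\mathbb{D}_4$.
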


Theorem \ref{three qmor del Pezzo} leads to the following, which answers
the Kapovich--{\pol} question in the negative.

\begin{theox}[Kapovich--{\pol} question]\label{KP question del Pezzo}
 The group $\Ham(\mathbb{D}_k) $ is not quasi-isometric to the real line $\R$ {\wrt} the Hofer metric when $k=3,4$.
\end{theox}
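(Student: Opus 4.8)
The plan is to derive Theorem~\ref{KP question del Pezzo} as a direct consequence of Theorem~\ref{three qmor del Pezzo}, following the standard Entov--Polterovich philosophy that several \emph{linearly independent} homogeneous quasimorphisms on $\wt{\Ham}$ that descend suitably obstruct quasi-isometry to $\R$. Concretely, for $k=3,4$ the group $\wt{\Ham}(\mathbb{D}_k)$ carries three pairwise distinct Entov--Polterovich quasimorphisms $\zeta_1,\zeta_2,\zeta_3$; since these are built from idempotents of the quantum homology, they are all Hofer-Lipschitz (with constant $1$ after normalization) and homogeneous. Pairwise distinctness of homogeneous quasimorphisms of this type in fact upgrades to linear independence over $\R$ of the differences, so $\mathrm{span}_\R\{\zeta_1,\zeta_2,\zeta_3\}$ is at least two-dimensional after quotienting by the line spanned by a single one; equivalently the map $(\zeta_1-\zeta_3,\zeta_2-\zeta_3)\colon \wt{\Ham}(\mathbb{D}_k)\to\R^2$ has image not contained in any line. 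The first step, then, is to record these Lipschitz and homogeneity properties and to promote ``distinct'' to ``the associated $\R^2$-valued map is genuinely two-dimensional in the coarse sense.''

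The second step is to pass from $\wt{\Ham}$ to $\Ham$. Because $\mathbb{D}_k$ is monotone with $\dim H_2>0$ for $k\ge 1$, the kernel of $\pi_1(\Ham)\to\pi_1(\wt{\Ham})$-type subtleties are handled exactly as in \cite{[EP03]}: the Entov--Polterovich quasimorphisms on $\wt{\Ham}(\mathbb{D}_k)$ are known to descend to $\Ham(\mathbb{D}_k)$, up to adding a homomorphism, because the relevant asymptotic spectral invariants vanish on $\pi_1$ (for monotone manifolds the Seidel morphism contributions are controlled, and the del Pezzo surfaces are among the cases where this descent is established). Thus one obtains on $\Ham(\mathbb{D}_k)$ an $\R^2$-valued map that is Hofer-Lipschitz and whose image is coarsely two-dimensional. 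The main obstacle I anticipate is precisely this descent bookkeeping: one must make sure that the two independent directions survive the quotient, i.e.\ that the homomorphism corrections do not collapse the two-dimensional image to a line; this is where one invokes that these $\zeta_j$ are \emph{homogeneous} quasimorphisms that are not themselves homomorphisms (they are genuinely unbounded in a ``nonlinear'' way on the commutator subgroup), together with the superheaviness/disjointness inputs from the earlier sections that forced $\zeta_i\ne\zeta_j$ in the first place.

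The third step is the soft topological conclusion. If $\Ham(\mathbb{D}_k)$ were quasi-isometric to $\R$ via some $\Phi$, then every Hofer-Lipschitz function $\Ham(\mathbb{D}_k)\to\R$ would factor, up to bounded error, through $\Phi$, hence any two such functions would agree up to an affine function of each other plus bounded error; in particular the $\R^2$-valued map from Step~2 would have image within bounded distance of a line in $\R^2$, contradicting its coarse two-dimensionality. More precisely, on $\R$ any homogeneous quasimorphism is (a multiple of) the identity, so a quasi-isometry to $\R$ would force the space of homogeneous quasimorphisms that are Hofer-Lipschitz to be one-dimensional modulo bounded functions; Theorem~\ref{three qmor del Pezzo} exhibits a two-dimensional family, a contradiction. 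I would phrase the argument in this last, cleanest form: quasi-isometry to $\R$ implies $\dim\big(Q(\Ham)/\{\text{bounded}\}\big)\le 1$ among Lipschitz homogeneous quasimorphisms, and Theorem~\ref{three qmor del Pezzo} violates this for $k=3,4$.

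The only genuinely delicate point, as flagged above, is the $\wt{\Ham}\to\Ham$ descent and the verification that it keeps the family at least two-dimensional; everything else is formal manipulation of quasimorphisms and the definition of quasi-isometry. I would therefore devote the bulk of the written proof to citing or re-deriving the descent (pointing to \cite{[EP03]} and the monotone-case refinements, and to the superheaviness statements established earlier in the paper that certify non-triviality), and then close with the short coarse-geometry contradiction.
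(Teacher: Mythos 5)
The main gap is your descent step. You propose to descend each $\zeta_j$ individually from $\wt{\Ham}(\mathbb{D}_k)$ to $\Ham(\mathbb{D}_k)$ ``up to adding a homomorphism, because the relevant asymptotic spectral invariants vanish on $\pi_1$'', citing \cite{[EP03]}. This is not available for del Pezzo surfaces: $\pi_1(\Ham(\mathbb{D}_k))$ contains elements with nontrivial Seidel elements, the value of $\zeta_{e_j}$ on such a loop is governed by the valuation of the $e_j$-component of its Seidel element and has no reason to vanish, and \cite{[EP03]} only covers situations such as $S^2$ where $\pi_1(\Ham)$ is finite. The paper never descends the $\zeta_j$ individually; it works exclusively with the differences $\mu_{i,j}=\zeta_{e_i}-\zeta_{e_j}$, which descend to $\Ham(\mathbb{D}_k)$ and are Hofer Lipschitz (and $C^0$-continuous) by Theorem \ref{Kaw22 conti} (i.e. \cite[Theorem 22]{[Kaw22A]}), already invoked in the proof of Theorem \ref{EPP for Qn and del Pezzo}. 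Since your final $\R^2$-valued map $(\zeta_1-\zeta_3,\zeta_2-\zeta_3)$ consists of differences anyway, the correct move is to quote that theorem rather than to assert individual descent, which is precisely the point the difference construction is designed to circumvent.

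Two further steps are asserted rather than proved. First, pairwise distinctness of $\zeta_1,\zeta_2,\zeta_3$ does \emph{not} upgrade to linear independence of the differences: if, say, $\zeta_3=2\zeta_2-\zeta_1$, then all three are pairwise distinct while $\zeta_2-\zeta_3=\zeta_1-\zeta_2$. What actually gives independence is the geometric input from the proof of Theorem \ref{three qmor del Pezzo}: the three pairwise disjoint superheavy sets ($S_1$ for $e_1$, $S_2$ for $e_2$, the monotone torus for $e_3$), so that by Proposition \ref{suphv constant} a time-independent Hamiltonian with prescribed constants on these three sets realizes any prescribed pair of values of $(\mu_{1,2},\mu_{2,3})$. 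Second, your coarse-geometry conclusion is phrased through two claims that do not hold as stated: a Hofer-Lipschitz function on a space quasi-isometric to $\R$ need not agree, up to bounded error, with an affine function of the quasi-isometry, and homogeneity is a group-theoretic notion that does not transport along a quasi-isometry (which is not a homomorphism), so ``QI to $\R$ forces the space of Lipschitz homogeneous quasimorphisms to be one-dimensional'' requires justification. The standard repair, implicit in the paper's one-line conclusion from linear independence, is to take disjointly supported Hamiltonians $F,G$ equal to $1$ on $S_1$ and $S_2$ respectively and to observe that $(s,t)\mapsto \phi^1_{sF+tG}$ has Hofer norm bounded above linearly in $|s|+|t|$ and bounded below via the Lipschitz quasimorphisms $\mu_{1,2},\mu_{2,3}$, producing a quasi-isometric embedding of $\R^2$ into $\Ham(\mathbb{D}_k)$, which is impossible in a space quasi-isometric to $\R$. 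With these repairs your outline matches the paper's argument.
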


\begin{remark}
    As mentioned earlier in Section \ref{intro hofer}, to the author's knowledge, the Kapovich--{\pol} question has been only answered (in the negative) for few cases.
\end{remark}

\subsection{Applications}

We discuss some applications of Theorems \ref{main qmor}, \ref{lags in Qn}, \ref{three qmor del Pezzo}. The following question of {\EP}--Py from 2012 has been an important open question, which is referred to as the ``Quasimorphism question'' in the monograph of McDuff--Salamon \cite[Chapter 14, Problem 23]{[MS98]}:

\begin{question}[{\EP}--Py question \cite{[EPP12]}]\label{EPP general}

For a closed symplectic manifold $(X,\omega)$, is there a non-trivial homogeneous quasimorphism on $\Ham(X,\omega)$ which is $C^0$-continuous? If yes, can it be Hofer Lipschitz continuous?
\end{question}

\begin{remark}
\begin{enumerate}
    \item 
The $C^0$-topology on $\Ham (X)$ is induced by the $C^0$-metric of Hamiltonian diffeomorphisms $\phi,\psi \in \Ham(M,\omega)$, which is defined by
$$d_{C^0}(\phi,\psi):=\max_{x\in M}d(\phi(x),\psi(x)),$$
where $d$ denotes the distance on $X$ induced by a fixed Riemannian metric on $X$. Different choices of Riemannian metrics on $X$ will induce equivalent $C^0$-topology. 

\item One point of Question \ref{EPP general} is that currently the relation between the Hofer metric and the $C^0$-metric is not well understood. See, for example, \cite{[JS]} for the latest progress on this topic.

\item The original question was posed for $X=S^2$. This extended version was considered in \cite{[Kaw22A]} before the final resolution of the original question by Cristofaro-Gardiner--Humili\`ere--Mak--Seyfaddini--Smith in \cite{[CGHMSS]}.

\end{enumerate}

\end{remark}

In \cite{[Kaw22A]}, a positive answer to Question \ref{EPP general} was provided for $Q^n,\ n=2,4$. By using Theorems \ref{lags in Qn} and \ref{three qmor del Pezzo}, we manage to generalize this to $Q^n$ for all $n$, and to some del Pezzo surfaces.

\begin{theox}\label{EPP for Qn and del Pezzo}
For $X=Q^n$ and $X=\mathbb{D}_k,\ k=2,3,4$, there exist non-trivial homogeneous quasimorphisms
$$\mu:\Ham(X) \to \mathbb{R}$$
that are both Hofer Lipschitz and $C^0$ continuous.
\end{theox}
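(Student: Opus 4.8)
The plan is to deduce Theorem \ref{EPP for Qn and del Pezzo} from the superheaviness statements in Theorem \ref{lags in Qn}(3) (and their del Pezzo analogues implicit in Theorem \ref{three qmor del Pezzo}), following the strategy that was already implemented for $Q^2$ and $Q^4$ in \cite{[Kaw22A]}. Recall that an Entov--Polterovich type homogeneous quasimorphism $\zeta$ on $\wt{\Ham}(X)$ is built from a choice of idempotent in the quantum homology ring, and by construction it is Hofer Lipschitz. The first step is therefore to exhibit, on each of the manifolds $X = Q^n$ and $X = \mathbb{D}_k$ for $k=2,3,4$, a nonzero homogeneous quasimorphism $\mu$ on $\Ham(X)$ (not merely its universal cover) obtained by combining two of the $\zeta$'s. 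For $Q^n$ one takes $\mu := \zeta_+ - \zeta_-$; since $\pi_1(\Ham(X))$ is finite (or at least its image under each $\zeta_\pm$ contributes equally, as both come from the same ambient symplectic asphericity/monotonicity data), this difference descends to $\Ham(X)$, and Theorem \ref{main qmor} guarantees it is nonzero. The analogous combinations of the two or three quasimorphisms from Theorem \ref{three qmor del Pezzo} handle $\mathbb{D}_2, \mathbb{D}_3, \mathbb{D}_4$.

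The second and main step is the $C^0$-continuity of $\mu$. Here one uses the mechanism, going back to Entov--Polterovich and refined in \cite{[Kaw22A]}, that relates $C^0$-continuity of a difference of Entov--Polterovich quasimorphisms to the superheaviness of a pair of \emph{disjoint} subsets. Concretely: $\zeta_+$ vanishes on every Hamiltonian supported in the complement of a $\zeta_+$-superheavy set, and likewise for $\zeta_-$. By Theorem \ref{lags in Qn}, $T^n_{\text{GZ}}$ is $\zeta_+$-superheavy, $S^n_{\text{van}}$ is $\zeta_-$-superheavy, and crucially $T^n_{\text{GZ}} \cap S^n_{\text{van}} = \emptyset$. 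One then invokes the standard fragmentation-type argument: a $C^0$-small Hamiltonian diffeomorphism can be written (after passing to a power, which is harmless for a homogeneous quasimorphism) as a product of Hamiltonian diffeomorphisms each supported in a displaceable ball, and by the disjointness one can arrange that each such ball avoids at least one of the two superheavy Lagrangians; evaluating $\mu = \zeta_+ - \zeta_-$ on such a factor gives a controlled bound, and letting the $C^0$-size go to zero forces $\mu \to 0$. This is exactly the scheme used in \cite[Section on $C^0$-continuity]{[Kaw22A]} for $n=2,4$, and nothing in it is special to those dimensions once Theorem \ref{lags in Qn} is available. For the del Pezzo cases one needs the corresponding disjointness of the relevant superheavy sets, which is recorded in (or follows from the proof of) Theorem \ref{three qmor del Pezzo}.

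The third step is bookkeeping: verifying that the construction really produces a quasimorphism on $\Ham(X)$ rather than only on $\wt{\Ham}(X)$, i.e. that the relevant linear combination kills the contribution of $\pi_1(\Ham(X))$; this is where the monotone/Fano hypothesis and the explicit normalization of the Entov--Polterovich quasimorphisms (via spectral invariants with the fundamental-class normalization) are used, and it is routine. One also records Hofer Lipschitz continuity, which is immediate since each $\zeta$ is $1$-Lipschitz in the Hofer metric and $\mu$ is a finite integer combination of them.

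The step I expect to be the genuine obstacle — and the reason this theorem is not automatic — is \emph{not} in the present deduction but upstream: it is the disjointness plus simultaneous superheaviness packaged in Theorem \ref{lags in Qn}(2)--(3) and its del Pezzo counterpart. The $C^0$-continuity argument is, by contrast, a now-standard consequence once one has two disjoint subsets that are superheavy with respect to two different quasimorphisms appearing in $\mu$. So the proof here will be short: assemble $\mu$ as a difference of the $\zeta$'s, cite Theorems \ref{main qmor}, \ref{lags in Qn}, \ref{three qmor del Pezzo} for nontriviality and superheaviness, run the fragmentation/displaceability argument against the disjoint superheavy Lagrangians to get $C^0$-continuity, and note Hofer Lipschitz continuity for free.
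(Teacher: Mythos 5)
Your overall assembly is the same as the paper's: take $\mu=\zeta_+-\zeta_-$ on $Q^n$ (resp. $\mu_{i,j}=\zeta_{e_i}-\zeta_{e_j}$ on $\mathbb{D}_k$), get non-triviality from Theorems \ref{main qmor} and \ref{three qmor del Pezzo}, and note Hofer Lipschitz continuity for free. But the step you treat as ``now-standard'' is exactly where your proposal diverges from the paper and where it has a genuine gap. In the paper, descent to $\Ham(X)$ and $C^0$-continuity are not deduced from superheavy sets at all: they are quoted wholesale from \cite[Theorem 22]{[Kaw22A]} (restated as Theorem \ref{Kaw22 conti}), which holds for \emph{any} monotone symplectic manifold with semisimple quantum cohomology. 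The mechanism there is that each Entov--Polterovich quasimorphism has the Calabi property on displaceable open subsets, so a difference of two of them vanishes on every Hamiltonian diffeomorphism supported in a displaceable set, and one then invokes the Entov--Polterovich--Py criterion of \cite{[EPP12]} to get $C^0$-continuity. The superheaviness of $T^n _{\text{GZ}}$ and $S^n _{\text{van}}$ and their disjointness are used only to prove $\zeta_+\neq\zeta_-$, i.e.\ non-triviality --- the division of labor is the opposite of the one you describe.

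The substitute continuity argument you sketch does not work as stated. If a fragment is supported in a \emph{displaceable} ball, then $\mu$ vanishes on it whether or not the ball meets the superheavy Lagrangians (both $\zeta_+$ and $\zeta_-$ restrict to the Calabi homomorphism there); if instead the ball merely avoids, say, $T^n _{\text{GZ}}$ but is not displaceable, you only learn $\zeta_+=0$ on that factor and have no control whatsoever on $\zeta_-$, hence none on $\mu$. More fundamentally, fragmenting a $C^0$-small diffeomorphism into pieces supported in balls gives no bound on the \emph{number} of factors (nor on their Hofer or Calabi size) in terms of $d_{C^0}(\phi,\id)$, and for a quasimorphism the defect accumulates with the number of factors, so ``letting the $C^0$-size go to zero forces $\mu\to 0$'' does not follow; the passage from ``vanishes on displaceable supports'' to $C^0$-continuity is precisely the nontrivial content of \cite{[EPP12]} and is more delicate than naive fragmentation. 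Likewise, your descent of $\mu$ from $\wt{\Ham}(X)$ to $\Ham(X)$ via ``$\pi_1(\Ham(X))$ is finite (or contributes equally)'' is unsubstantiated; in the paper the descent is also part of the cited Theorem \ref{Kaw22 conti} rather than routine bookkeeping.
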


\begin{remark}
 \begin{enumerate}
     \item We can actually construct two distinct {\qmor}s with the relevant continuity properties in Theorem \ref{EPP for Qn and del Pezzo} for $\mathbb{D}_k,\ k=3,4$.

     \item It is also known that one can construct {\qmor}s for $\C P^n$ that have continuity properties as in Question \ref{EPP general} by \cite{[KS]}. 
 \end{enumerate}

\end{remark}

The argument of the proof of Theorem \ref{EPP for Qn and del Pezzo} actually shows that the asymptotic spectral norm $\ovl{\gamma}$ 
$$\ovl{\gamma} : \Ham(X)  \to \R$$
$$\ovl{\gamma} (\phi) := \lim_{k \to +\infty} \frac{\gamma(\phi^k)}{k}$$
is $C^0$-continuous for $X=Q^n$ and $X=\mathbb{D}_k,\ k=2,3,4$, even though the $C^0$-continuity of the spectral norm $\gamma$ for none of these {\symp} {\mfd}s is confirmed at the time of writing. Note that the $C^0$-continuity of the spectral norm $\gamma$ is currently proven for the closed surfaces \cite{[Sey13]}, {\symp}ally aspherical {\mfd}s \cite{[BHS21]}, $\C P^n$ \cite{[Sh22]}, and negative monotone {\symp} {\mfd}s \cite{[Kaw22B]}.

\begin{theox}\label{asymp gamma conti}
Let $(X,\omega)$ be a monotone symplectic manifold whose quantum cohomology ring $QH (X;\Lambda)$ is semi-simple. Then, the asymptotic spectral norm
$$\ovl{\gamma} : \Ham(X)  \to \R$$
is $C^0$-continuous. Moreover, for $Q^n$ and $\mathbb{D}_k,\ k=2,3,4$, $\ovl{\gamma}$ is non-trivial, i.e.
$$\ovl{\gamma} \neq 0 .$$ 
\end{theox}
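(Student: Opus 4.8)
The plan is to express $\ovl{\gamma}$ through the Entov--Polterovich quasimorphisms furnished by the semi-simple decomposition, extract from this a ``quasi-subadditivity'' of $\ovl{\gamma}$ with uniform defect, and then run a fragmentation argument exploiting that iterates of a Hamiltonian diffeomorphism supported in a displaceable set remain supported there. For the first step, semi-simplicity gives a product decomposition $QH(X;\Lambda)=\prod_{j=1}^{r}Q_j$ into fields together with $[M]=\sum_j e_j$ for the corresponding idempotents; since each $e_j$ is idempotent the function $c(e_j,\cdot)$ is subadditive on $\wt{\Ham}(X)$, and because $Q_j$ is a field its homogenization $\zeta_j$ is a homogeneous quasimorphism (Entov--Polterovich), whose defects we bound by $D$. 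Combining $c(a+b,\phi)\le\max\{c(a,\phi),c(b,\phi)\}$ with $c(e_j,\phi)\le c([M],\phi)+c(e_j,\id)$ (the latter since $[M]$ is the unit, by the triangle inequality) and homogenizing gives $\lim_k c([M],\wt{\phi}^{k})/k=\max_j\zeta_j(\wt{\phi})$; adding the analogous identity for $\wt{\phi}^{-1}$ and using $\zeta_j(\wt{\phi}^{-1})=-\zeta_j(\wt{\phi})$ yields $\ovl{\gamma}(\phi)=\max_j\zeta_j(\wt{\phi})-\min_j\zeta_j(\wt{\phi})$, which is independent of the lift (in the monotone case the Seidel elements of $\pi_1(\Ham)$ act on each $Q_j$ by units of zero valuation). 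Two consequences to record: $\ovl{\gamma}\ge0$, and $\ovl{\gamma}(\phi\psi)\le\ovl{\gamma}(\phi)+\ovl{\gamma}(\psi)+2D$ for all $\phi,\psi\in\Ham(X)$.

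Next I would prove the $C^0$-continuity. If $\psi\in\Ham(X)$ is supported in a displaceable open set $U$, then so is every iterate $\psi^k$, hence $\gamma(\psi^k)\le 2e(U)$ (with $e(U)$ the displacement energy) independently of $k$ by energy--capacity, and therefore $\ovl{\gamma}(\psi)=0$. Fix a finite cover of $X$ by displaceable balls and a constant $\delta_0>0$ such that the fragmentation lemma writes every $\phi\in\Ham(X)$ with $d_{C^0}(\phi,\id)<\delta_0$ as a product of at most $N$ Hamiltonian diffeomorphisms each supported in one of these balls, with $N$ uniform over this $C^0$-neighbourhood of $\id$. Since $d_{C^0}(\phi^m,\id)\le m\,d_{C^0}(\phi,\id)$, choosing $m\asymp\delta_0/d_{C^0}(\phi,\id)$ and fragmenting $\phi^m$, the quasi-subadditivity and $\ovl{\gamma}(\psi_i)=0$ give $\ovl{\gamma}(\phi^m)\le2(N-1)D$, whence $\ovl{\gamma}(\phi)=\ovl{\gamma}(\phi^m)/m\le C\,d_{C^0}(\phi,\id)$ with $C=C(X)$; in particular $\ovl{\gamma}$ is Lipschitz at the identity. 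For an arbitrary $\phi_n\to\phi$ in $C^0$ one has $\phi^k\phi_n^{-k}\to\id$ in $C^0$ for each fixed $k$, so one may pick $k=k_n\to\infty$ with $d_{C^0}(\phi^{k_n}\phi_n^{-k_n},\id)<\delta_0$ for all $n$; then from $\ovl{\gamma}(\phi)=\ovl{\gamma}(\phi^{k_n})/k_n\le\ovl{\gamma}(\phi^{k_n}\phi_n^{-k_n})/k_n+\ovl{\gamma}(\phi_n)+2D/k_n$ and the identity bound $\ovl{\gamma}(\phi^{k_n}\phi_n^{-k_n})\le C\delta_0$ we get $\ovl{\gamma}(\phi)-\ovl{\gamma}(\phi_n)\le(C\delta_0+2D)/k_n\to0$, and symmetrically; hence $\ovl{\gamma}$ is $C^0$-continuous.

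For the non-triviality assertion, the formula of the first step gives $\ovl{\gamma}(\phi)\ge|\zeta_i(\wt{\phi})-\zeta_j(\wt{\phi})|$ for any two factors. For $X=Q^n$ apply this with $\zeta_{\pm}$: Theorem~\ref{main qmor} asserts $\zeta_+\ne\zeta_-$, so some $\phi$ satisfies $\ovl{\gamma}(\phi)>0$; concretely, by Theorem~\ref{lags in Qn} one may take $H$ equal to $1$ near the $\zeta_+$-superheavy torus $T^n_{\mathrm{GZ}}$ and to $0$ near the disjoint $\zeta_-$-superheavy sphere $S^n_{\mathrm{van}}$, which forces $\ovl{\gamma}(\phi_H^1)\ge1$. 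For $X=\mathbb{D}_k$, $k=2,3,4$, the same argument applies to any two of the pairwise distinct quasimorphisms of Theorem~\ref{three qmor del Pezzo} (equivalently, to a pair of disjoint superheavy Lagrangians attached to distinct idempotents).

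The main obstacle is the $C^0$-continuity step, and in particular the observation that semi-simplicity is exactly what promotes the spectral norm --- whose genuine $C^0$-continuity remains open for $Q^n$ and the $\mathbb{D}_k$ --- to a $C^0$-continuous asymptotic invariant: it is the field decomposition that produces the Entov--Polterovich quasimorphisms, hence the uniform-defect quasi-subadditivity of $\ovl{\gamma}$, without which the fragmentation estimate degenerates (a naive count of displaceable balls diverges as $d_{C^0}(\phi,\id)\to0$). The delicate bookkeeping is the rescaling trick --- playing the fixed ball count against a large power $m\asymp\delta_0/d_{C^0}(\phi,\id)$, then sending the power to infinity to absorb the constant defect under homogenization --- together with the need for a fragmentation carried out inside $\Ham(X)$ whose number of pieces is bounded uniformly on a $C^0$-neighbourhood of the identity.
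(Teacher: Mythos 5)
Your first and last steps coincide with the paper's argument: the identity $\ovl{\gamma}(\phi)=\max_{i,j}\bigl(\zeta_{e_i}-\zeta_{e_j}\bigr)(\wt{\phi})$ is exactly Claim \ref{zeta max} together with the relation \ref{useful relation C0}, and the non-triviality for $Q^n$ and $\mathbb{D}_k$, $k=2,3,4$, is obtained, as in the paper, from a pair of disjoint superheavy Lagrangians attached to distinct idempotents (Theorems \ref{main qmor}, \ref{lags in Qn}, \ref{three qmor del Pezzo}). A small aside: your justification that the formula is lift-independent (``Seidel elements act with zero valuation'') is not right in general; what saves you is simply that $\ovl{\gamma}$ is defined on $\Ham(X)$ to begin with, so once the formula is established for one generating Hamiltonian the right-hand side cannot depend on the lift.

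The genuine gap is in the $C^0$-continuity step. Your argument hinges on a ``fragmentation lemma'' asserting that every $\phi$ with $d_{C^0}(\phi,\id)<\delta_0$ factors into at most $N$ Hamiltonian diffeomorphisms supported in displaceable balls, with $N$ uniform over the whole $C^0$-neighbourhood of $\id$. No such quantitative fragmentation is available: Banyaga-type fragmentation gives no control on the number of factors, and bounding a fragmentation norm by the $C^0$-distance to the identity is precisely the sort of statement that is not known. Indeed, if it were true, then by your soft rescaling argument every homogeneous quasimorphism vanishing on displaceably supported elements would automatically be $C^0$-continuous, which would render the main continuity theorems of \cite{[EPP12]} and \cite{[Kaw22A]} (and Theorem \ref{EPP for Qn and del Pezzo} of this paper) immediate --- they are not. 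The surrounding ingredients you use are fine (vanishing of $\ovl{\gamma}$ on displaceably supported elements via energy--capacity, quasi-subadditivity with uniform defect, $d_{C^0}(\phi^m,\id)\le m\,d_{C^0}(\phi,\id)$, and the diagonal reduction of continuity at arbitrary $\phi$ to continuity at $\id$), but they cannot be run without the uniform fragmentation input. The paper avoids this issue entirely: it invokes Theorem \ref{Kaw22 conti} (\cite[Theorem 22]{[Kaw22A]}), which states that each difference $\mu_{i,j}=\zeta_{e_i}-\zeta_{e_j}$ descends to $\Ham(X)$ and is $C^0$-continuous and Hofer Lipschitz --- a result proved there by spectral-invariant and Lagrangian-intersection techniques, not by fragmentation --- and then concludes because $\ovl{\gamma}=\max_{i,j}\mu_{i,j}$ is a maximum of finitely many $C^0$-continuous functions. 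To repair your proposal, either prove the uniform fragmentation statement (which appears out of reach) or replace that step by the quoted continuity of the differences $\mu_{i,j}$, as the paper does.
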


\subsection{Acknowledgements}
I thank Yuhan Sun for a stimulating discussion and for sharing his insight with me. I thank Sobhan Seyfaddini for some useful comments. I greatly appreciate the referee's helpful feedback which improved the exposition of the paper. This work was carried out in several different institutions; \'Ecole Normale Sup\'erieure, Paris while the author was a PhD student, Universit\'e de Montr\'eal while the author was a CRM-postdoctoral fellow at Centre de Recherches Math\'ematiques (CRM) and ETH Z\"urich while the author was a Hermann-Weyl-Instructor at the Institute for Mathematical Research (FIM). The author thanks all institutions for their great hospitality.

\section{Preliminaries}\label{prelim}

We will review results that we will use in the proofs. A special emphasis is put on comparing the conventions in \cite{[OU16]} and other works, as we will use the convention from \cite{[OU16]} in the proofs. We will assume that {\symp} {\mfd}s and {\lag} sub{\mfd}s are all closed and monotone \footnote{Recall that a {\symp} {\mfd} $(X,\omega)$ is monotone if we have $\omega|_{\pi_2(X)}=\kappa \cdot c_1|_{\pi_2(X)}$ for some $\kappa >0$ where $c_1$ is the first Chern class of $TX$ and a {\lag} $L$ in $(X,\omega)$ is monotone if we have $\omega|_{\pi_2(X,L)} = \kappa_L \cdot \mu_L|_{\pi_2(X,L)}$ for some $\kappa_L >0$ where $\mu_L$ is the Maslov class of $L$.}.

\subsection{{\EP} {\qmor}s and (super)heaviness}\label{EP qmors and superheaviness}

First, we review some quantitative Floer theory including {\specinv}s by following \cite{[MS04]}. It is well-known that on a {\symp} {\mfd} $(X,\omega)$, for a non-{\degen} {\hamil} $H:=\{H_t:X \to \R \}_{t\in [0,1]}$ and a choice of a nice {\coeff} field $\Lambda^{\downarrow}$, which will be either the downward Laurent {\coeff}s $\Lambda_{\text{Lau}} ^{\downarrow} $
$$\Lambda_{\text{Lau}} ^{\downarrow} :=\{\sum_{k\leq k_0 } b_k t^k : k_0\in \mathbb{Z},b_k \in \mathbb{C} \}  ,   $$
or the downward Novikov {\coeff}s $\Lambda_{\text{Nov}} ^{\downarrow}$
$$\Lambda_{\text{Nov}} ^{\downarrow}:=\{\sum_{j=1} ^{\infty} a_j T^{\lambda_j} :a_j \in \mathbb{C}, \lambda _j  \in \mathbb{R},\lim_{j\to -\infty} \lambda_j =+\infty \} ,$$
one can construct a filtered Floer homology group $\{HF^\tau(H):=HF^\tau(H;\Lambda^{\downarrow})\}_{\tau \in \R}$ where for two numbers $\tau<\tau'$, the groups $HF^\tau(H;\Lambda^{\downarrow})$ and $HF^{\tau'}(H;\Lambda^{\downarrow})$ are related by a map induced by the inclusion map on the chain level:
$$i_{\tau,\tau'}: HF^\tau(H;\Lambda^{\downarrow}) \longrightarrow HF^{\tau'}(H;\Lambda^{\downarrow}) ,$$
and especially we have
$$i_{\tau}: HF^\tau(H;\Lambda^{\downarrow}) \longrightarrow HF^{}(H;\Lambda^{\downarrow}) ,$$
where $HF^{}(H;\Lambda^{\downarrow})$ is the Floer homology. There is a canonical ring isomorphism called the Piunikhin--Salamon--Schwarz (PSS)-map \cite{[PSS96]}, \cite{[MS04]}
\begin{equation}
    PSS_{H; \Lambda_{} } : QH (X;\Lambda_{}) \to HF(H;\Lambda_{} ^{\downarrow}) , \footnote{Usually, the PSS-isomorphism is a ring isomorphism between the quantum \textit{homology} ring and Floer homology ring $QH_\ast(X;\Lambda_{} ^{\downarrow}) \to HF(H;\Lambda_{} ^{\downarrow})$, but here we compose this with the Poincar\'e duality isomorphism $PD \colon QH(X;\Lambda_{}) \to QH_\ast(X;\Lambda_{} ^{\downarrow}) $ and call the composition the PSS-isomorphism, c.f. \cite[Section 12]{[MS04]}}.
\end{equation}
where $QH(X;\Lambda_{})$ denotes the quantum cohomology ring of $X$ with $\Lambda$-{\coeff}s, i.e.
$$ QH (X;\Lambda_{}) := H^\ast (X;\C) \otimes \Lambda_{} .$$
Here, $\Lambda_{}$ is either 
the Laurent {\coeff}s $\Lambda_{\text{Lau}}$
$$\Lambda_{\text{Lau}} :=\{\sum_{k\geq k_0 } b_k t^k : k_0\in \mathbb{Z},b_k \in \mathbb{C} \}  ,   $$
or the Novikov {\coeff}s $\Lambda_{\text{Nov}}$
$$\Lambda_{\text{Nov}}:=\{\sum_{j=1} ^{\infty} a_j T^{\lambda_j} :a_j \in \mathbb{C}, \lambda _j  \in \mathbb{R},\lim_{j\to +\infty} \lambda_j =+\infty \} $$
chosen accordingly to the set-up of the Floer homology.

\begin{remark}
\begin{enumerate}
    \item 
The choice of the {\coeff} fields will eventually become very important, see Section \ref{nov and lau}. If we do not specify the choice of it and denote it by $\Lambda$, it means that the argument/result holds for both $\Lambda_{\text{Lau}}$ and $\Lambda_{\text{Nov}}$.

\item Nevertheless, it might be helpful to keep in mind that $\Lambda_{\text{Lau}}$ can be embedded to $\Lambda_{\text{Nov}}$ by the inclusion given by $t \mapsto T^{\lambda_0}$, where $\lambda_0$ is a positive generator of $\langle \omega , \pi_2(X) \rangle$.
 
\item The formal variable $t$ in $\Lambda_{\text{Lau}}$ should be seen as representing the sphere with the minimal possible positive area $\lambda_0>0$ (and thus, the minimal possible Maslov index as well, by monotonicity).
 
\end{enumerate}
\end{remark}

\textit{Spectral invariants}, which were introduced by Schwarz \cite{[Sch00]} and developed by Oh \cite{[Oh05]} following the idea of Viterbo \cite{[Vit92]}, are real numbers $\{c (H,a ) \in \R\}$ associated to a pair of a {\hamil} $H$ and a class $a \in QH(X;\Lambda)$ in the following way:
$$c(H,a ) := \inf \{\tau \in \R : PSS_{H; \Lambda_{} } (a) \in \Im (i_{\tau})\} .$$

\begin{remark}
Although the Floer homology is only defined for a non-{\degen} {\hamil} $H$, the {\specinv}s can be defined for any {\hamil} by using the following so-called Hofer continuity property:
$$c(H,a )-c(G,a ) \leq \int_{0} ^1 \left(H_t(x) - G_t(x) \right) dt$$
for any $a \in QH(X),\ H$ and $G$.
\end{remark}

\textit{Triangle inequality.} For {\hamil}s $H$ and $G$, define their catenation $H\#G$ (which is a {\hamil} that generates the path $t \mapsto \phi_H ^t \circ \phi_G ^t$) as
\begin{equation}
    (H\#G)(t,x):=H(t,x)+G(t,(\phi_H ^t)^{-1}(x)) .
\end{equation}
Now, given two classes $a,b \in QH(X;\Lambda)$, we have
\begin{equation}
    c(H,a) + c(G,b) \geq c( H\#G , a \ast b) .
\end{equation}

\textit{Valuation.} For any $a\in  QH(X;\Lambda) \backslash \{0\},$
\begin{equation}
   c (0,a)=\nu(a) 
\end{equation}
where $0$ is the zero-function and $\nu : QH(X;\Lambda) \to \R$ is the natural valuation function
\begin{equation}
    \begin{gathered}
        \nu : QH(X;\Lambda) \to \R \\
        \nu(a):= \nu(\sum_{j=1} ^{\infty} a_j T^{\lambda_j}):=\min\{\lambda_j: a_j \neq 0\}.
    \end{gathered}
\end{equation}

\textit{Spectral norm.} The spectral pseudo-norm for a {\hamil} $H$ is the sum of the {\specinv}s for $H$ and the unit $1_X \in QH(X;\Lambda)$ of the {\QH} and for the inverse {\hamil} $\ovl{H}(t,x):=-H(t, (\phi_H ^t)(x))$, which generates the {\hamil} path $t \mapsto (\phi_H ^t)^{-1}$, and the unit $1_X \in QH(X;\Lambda)$:

\begin{equation}
    \begin{gathered}
        \gamma(H) := c(H,1_X ) +c(\ovl{H},1_X ).
    \end{gathered}
\end{equation}

Now, the spectral norm $ \gamma(\phi)$ for a {\hamil} {\diffeo} $\phi$ is defined as follows.

\begin{equation}
    \begin{gathered}
        \gamma: \Ham (X) \longrightarrow \R\\
        \phi \mapsto \gamma(\phi):=\inf_{\phi_H=\phi} \gamma(H) .
    \end{gathered}
\end{equation}

It is well-known that the spectral norm defines a non-{\degen} metric $d_{\gamma}(\phi,\phi'):=\gamma(\phi^{-1} \circ \phi')$ on $\Ham (X) $.  

\textit{{\EP} {\qmor}s.} {\EP} constructed a special map called the {\qmor} on $\wt{\Ham}(X)$ for under some assumptions via {\specinv}s, which we will briefly review. Recall that a {\qmor} $\mu$ on a group $G$ is a map to the real line $\R$ that satisfies the following two properties:
\begin{enumerate}
    \item There exists a constant $C>0$ such that 
    \begin{equation}\label{constant error}
        |\mu(f \cdot g) -\mu(f)-\mu(g)|<C
    \end{equation}
    for any $f,g \in G$.
    
    \item For any $k \in \Z$ and $f \in G$, we have
    \begin{equation}\label{homogeniety}
        \mu(f^k)=k \cdot \mu(f) .
    \end{equation}
\end{enumerate} 

\begin{theo}[{\cite{[EP03]}}]\label{EP qmor}
Suppose $QH(X;\Lambda)$ has a field factor, i.e. 
$$ QH(X;\Lambda) = Q \oplus A $$
where $Q$ is a field and $A$ is some algebra. Decompose the unit $1_{X}$ of $QH(X;\Lambda)$ {\wrt} this split, i.e. 
$$1_{X}=e + a .$$
Then, the {\asympt} {\specinv} of $\wt{\phi}$ {\wrt} $e$ defines a {\qmor}, i.e.
\begin{equation}
    \begin{gathered}
        \zeta_{e}:\wt{\Ham}(X) \longrightarrow \R \\
         \zeta_{e} ( \wt{\phi}) := \lim_{k \to +\infty} \frac{c (H^{\# k},e )}{k}
    \end{gathered}
\end{equation}
where $H$ is any mean-normalized {\hamil} such that the path $t \mapsto \phi_H ^t $ represents the class $\wt{\phi}$ in $\wt{\Ham}(X)$ and $H^{\# k}$ is the $k$-th catenation of $H$, $ H^k:=\underbrace{H \# H \# \cdots \# H}_{k\text{-times}}$.
\end{theo}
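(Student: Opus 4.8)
The plan is to follow {\EP} \cite{[EP03]} and reduce everything to the statement that, because $Q$ is a \emph{proper} field factor of $QH(X;\Lambda)$, the function $q := c(\cdot\,,e)$ on $\wt{\Ham}(X)$ is a quasimorphism with finite defect; the homogeneous quasimorphism $\zeta_e$ of the theorem is then simply its homogenization. First I would assemble the inputs. For a mean-normalized $H$ the number $c(H,a)$ depends only on the class $\wt\phi = \wt{\phi_H}$ it generates in $\wt{\Ham}(X)$ — this is the known homotopy invariance of {\specinv}s on the universal cover, using that mean-normalization removes the $\R$-ambiguity — so I write $c(\wt\phi,a)$, with $c(H^{\# k},a) = c(\wt\phi^{\,k},a)$; in particular $q$ and $\zeta_e$ are well defined. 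The properties used are the triangle inequality $c(\wt\phi\wt\psi,\, a\ast b) \leq c(\wt\phi,a) + c(\wt\psi,b)$ for the quantum product $\ast$; the normalization $c(\wt{\id},a) = \nu(a)$, with $\nu$ the valuation on $\Lambda$; and Poincar\'e duality for {\specinv}s, $c(\wt\phi^{\,-1},a) = \sup\{\nu(\Pi(a,b)) - c(\wt\phi,b) : \Pi(a,b)\neq 0\}$, where $\Pi(a,b) := \int_X (a\ast b)$ is the non-degenerate Poincar\'e pairing. I also record that $e\ast e = e$ since $e$ is the unit of $Q$, and that, writing $QH(X;\Lambda) = Q\oplus A$ as a direct sum of ideals, one has $\Pi(x,y) = \int_X x\ast y = 0$ for $x\in Q$, $y\in A$ (as $Q\ast A = 0$), so $\Pi|_Q$ is non-degenerate and $\Pi(e,\cdot)$ is a non-zero $\Lambda$-linear functional on the field $Q$.

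From $e\ast e = e$ the triangle inequality gives subadditivity $c(\wt\phi^{\,j+k},e) \leq c(\wt\phi^{\,j},e) + c(\wt\phi^{\,k},e)$, so Fekete's lemma shows $\zeta_e(\wt\phi) = \lim_k c(\wt\phi^{\,k},e)/k = \inf_k c(\wt\phi^{\,k},e)/k$ exists in $[-\infty,\infty)$. Positive homogeneity $\zeta_e(\wt\phi^{\,m}) = m\,\zeta_e(\wt\phi)$ for $m\geq 0$ then follows from the definition and the existence of this limit, while finiteness of $\zeta_e$ and the relation $\zeta_e(\wt\phi^{\,-1}) = -\zeta_e(\wt\phi)$ — hence full homogeneity — will drop out once the defect bound below shows that $c(\wt\phi^{\,k},e) + c(\wt\phi^{\,-k},e)$ is bounded (it lies in $[\nu(e),\kappa]$ for the constant $\kappa$ produced next).

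The substance, and the step I expect to be the main obstacle, is bounding the defect of $q$. Subadditivity gives $q(\wt\phi\wt\psi) \leq q(\wt\phi) + q(\wt\psi)$, and applying it to $\wt\phi = (\wt\phi\wt\psi)\wt\psi^{-1}$ yields the reverse inequality up to the term $q(\wt\psi) + q(\wt\psi^{-1})$; hence it suffices to prove $\kappa := \sup_{\wt\psi}\bigl(c(\wt\psi,e) + c(\wt\psi^{-1},e)\bigr) < \infty$. Properness of the factor is essential here: for $e = 1_X$ this supremum is the spectral norm $\gamma$, which is unbounded on $\wt{\Ham}(X)$. Using Poincar\'e duality, $c(\wt\psi^{-1},e) = \sup\{\nu(\Pi(e,b)) - c(\wt\psi,b)\}$ over $b$ with $\Pi(e,b)\neq 0$; since $\Pi(e,b) = \Pi(e,\,e\ast b)$ one may restrict to $b \in Q\setminus\{0\}$, which is then invertible in the field $Q$, say $b\ast b' = e$, so the triangle inequality and normalization give $c(\wt\psi,b) \geq c(\wt\psi,e) - \nu(b')$. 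Therefore
$$c(\wt\psi,e) + c(\wt\psi^{-1},e) \;\leq\; \sup_{b\in Q\setminus\{0\}}\bigl(\nu(\Pi(e,b)) + \nu(b^{-1})\bigr),$$
and the right-hand side no longer involves $\wt\psi$ and is finite: when $Q\cong\Lambda$ — the situation in all of the paper's applications, where $QH(X;\Lambda)$ is semisimple — it equals exactly $\nu(\Pi(e,e)) + \nu(e)$ by $\Lambda$-bilinearity of $\Pi$ (and is finite since $\Pi|_Q$ is non-degenerate on the line $Q$), while for a general finite field extension $Q/\Lambda$ a similar estimate using the valuation on $Q$ applies. This gives $\kappa < \infty$, so $q$ is a quasimorphism of defect $\leq \kappa$.

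It then remains only to invoke the standard fact that the homogenization $\wt\phi \mapsto \lim_k q(\wt\phi^{\,k})/k$ of a finite-defect quasimorphism is itself a homogeneous quasimorphism (of defect at most $2\kappa$); since $q(\wt\phi^{\,k}) = c(H^{\# k},e)$, this homogenization is precisely the map $\zeta_e$ of the statement. The one genuinely non-formal ingredient is the finiteness of $\kappa$, which is exactly where the field-factor hypothesis is used in an essential way, and where care is needed with the valuation conventions under which Poincar\'e duality for {\specinv}s is stated — and, in this paper, with compatibility with the $\Lambda_{\text{Nov}}$ versus $\Lambda_{\text{Lau}}$ set-ups of Section \ref{nov and lau}.
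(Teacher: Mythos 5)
The paper offers no proof of this statement: it is recalled from Entov--Polterovich (\cite{[EP03]}, with the idempotent/field-factor refinement as in \cite{[EP08]}), so there is nothing internal to compare against, and your reconstruction is indeed the standard Entov--Polterovich argument --- subadditivity of $c(\cdot,e)$ from $e\ast e=e$ and the triangle inequality, Fekete for the homogenization, and the uniform bound on $c(\wt\psi,e)+c(\wt\psi^{-1},e)$ via spectral Poincar\'e duality combined with invertibility inside the field factor --- which is essentially correct, and you rightly isolate the defect bound as the only non-formal step. Two points to repair. First, your aside that $Q\cong\Lambda$ ``in all of the paper's applications'' is wrong: the quasimorphisms $\zeta_\pm$ are defined over the Laurent field (Section \ref{nov and lau}), where $Q_+\subset QH(Q^n;\Lambda_{\text{Lau}})$ has $\Lambda_{\text{Lau}}$-dimension $n$ (it only breaks into one-dimensional factors after base change to $\Lambda_{\text{Nov}}$), so the finite field-extension case you defer to ``a similar estimate'' is precisely the case needed; it does go through (the valuation extends to $Q$ with $\nu_Q(b)+\nu_Q(b^{-1})=0$, and over the complete field $\Lambda_{\text{Lau}}$ it differs from the ambient valuation by a bounded amount, as in \cite{[EP08]}), but this should be carried out rather than treated as peripheral. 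Second, when you restrict the duality supremum to $b\in Q\setminus\{0\}$ you should record the comparison $c(\wt\psi,e\ast b)\le \nu(e)+c(\wt\psi,b)$, which shows the restriction costs only the additive constant $\nu(e)$ and does not affect finiteness of $\kappa$.
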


\begin{remark}
When we consider {\EP} {\qmor}s, we take the Laurent {\coeff}s $\Lambda_{\text{Lau}}$ (see Section \ref{nov and lau}) but the condition in Theorem \ref{EP qmor} is equivalent for $\Lambda_{\text{Lau}}$ and $\Lambda_{\text{Nov}}$, see \cite{[EP08]}.
\end{remark}

\begin{example}
As for the quadric {\hypsurf} $Q^n$, the quantum cohomology is semi-simple \cite{Abr00} and with the Laurent {\coeff}s, it splits into a direct sum of two fields (which could be easily seen by the fact that the minimal Chern number $N_{Q^n}=n$)

\begin{equation}
    QH(Q^n;\Lambda_{\text{Lau}} ) = Q_+ \oplus Q_- ,
\end{equation}
where the unit $1_{Q^n}$ splits as
\begin{equation}
\begin{gathered}
     1_{Q^n} = e_+ + e_- ,\\
     e_\pm:=\frac{1_{Q^n} \pm PD([pt]) t}{2} .
\end{gathered}
\end{equation}
Thus, we get two {\EP} {\qmor}s 
\begin{equation}
      \zeta_{\pm}:=\zeta_{e_\pm}:\wt{\Ham}(Q^n) \longrightarrow \R 
\end{equation}
which we prove that they are distinct in Theorem \ref{main qmor}.
\end{example}

\begin{remark}
By slight abuse of notation, we will also see $\zeta_{e}$ as a function on the set of time-independent {\hamil}s:
\begin{equation}
    \begin{gathered}
        \zeta_{e}:C^{\infty}(X) \longrightarrow \R \\
         \zeta_{e} ( H) := \lim_{k \to +\infty} \frac{c (H^{\# k},e )}{k}.
    \end{gathered}
\end{equation}
\end{remark}

\textit{Spectral rigidity.} Not only that {\EP} defined {\qmor}s on $\wt{\Ham}(X)$, they introduced a level of rigidity for subsets in $X$ called (super)heaviness.

\begin{definition}[{\cite{[EP09]},\cite{[EP06]}}]\label{def of heavy}
Take an idempotent $e \in QH(X;\Lambda)$ and denote the {\asympt} {\specinv} {\wrt} $e$ by $\zeta_{e}$. A subset $S$ of $X$ is called
\begin{enumerate}
    \item $e$-heavy (or heavy {\wrt} $e$) if for any time-independent {\hamil} $H:X \to \R$, we have
$$  \inf_{x\in S} H(x)  \leq \zeta_{e} ( H) , $$

\item
$e$-{\suphv} (or {\suphv} {\wrt} $e$) if for any time-independent {\hamil} $H:X \to \R$, we have
$$ \zeta_{e} ( H) \leq \sup_{x\in S} H(x) . $$

\end{enumerate}
\end{definition} 

The following is an easy corollary of the definition of {\suphvness} which is useful.

\begin{prop}[{\cite{[EP09]}}]\label{suphv constant}
Assume the same condition on $QH(X;\Lambda)$ as in Theorem \ref{EP qmor}. Let $S$ be a subset of $X $ that is $\zeta$-{\suphv}. For a time-independent {\hamil} $H:X \to \R$ whose restriction to $S$ is constant, i.e. $H|_{S}\equiv r,\ r\in \R$, we have
$$\zeta(H)=r .$$

\end{prop}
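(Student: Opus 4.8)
The plan is to derive Proposition \ref{suphv constant} directly from the two inequalities packaged in the definition of (super)heaviness, using the hypothesis that $H|_S \equiv r$. First I would observe that superheaviness of $S$ with respect to $\zeta$ gives, for the given time-independent Hamiltonian $H$,
$$\zeta(H) \leq \sup_{x\in S} H(x) = r,$$
since $H$ is constantly equal to $r$ on $S$. For the reverse inequality I need a lower bound $\zeta(H) \geq r$; this is where I would invoke that, under the hypothesis of Theorem \ref{EP qmor}, a $\zeta$-superheavy set is automatically $\zeta$-heavy (this is part of the Entov--Polterovich package, e.g. \cite{[EP09]}; superheaviness implies heaviness whenever $\zeta$ comes from an idempotent, and in the field-factor situation of Theorem \ref{EP qmor} the relevant idempotent $e$ is the one defining $\zeta$). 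Heaviness of $S$ then yields
$$r = \inf_{x\in S} H(x) \leq \zeta(H).$$
Combining the two gives $\zeta(H) = r$.

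If one prefers not to route through the heavy/superheavy implication, I would instead give a self-contained argument for the lower bound using the normalization and triangle-type properties of spectral invariants. Consider the constant Hamiltonian $G \equiv r$; then $\zeta(G) = r$ because the asymptotic spectral invariant of a constant is that constant (the spectral invariant $c(\,\cdot\,, e)$ shifts by a constant when the Hamiltonian is shifted by a constant, and $c(0,e)=0$ once $e$ is an idempotent, which is exactly where the field-factor hypothesis of Theorem \ref{EP qmor} enters). Now apply superheaviness of $S$ to the Hamiltonian $G - H$: since $(G-H)|_S \equiv 0$, we get $\zeta(G-H) \leq \sup_{x\in S}(G(x)-H(x)) = 0$. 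Finally use that $\zeta$, being (up to the quasimorphism defect, or exactly after passing to the homogenization) subadditive and homogeneous, satisfies $\zeta(G) \leq \zeta(H) + \zeta(G-H)$ in the asymptotic limit, whence $r = \zeta(G) \leq \zeta(H) + 0$, i.e. $\zeta(H) \geq r$. Together with the upper bound this proves the claim.

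The only genuinely delicate point is the normalization $\zeta(\text{const})=\text{const}$ and the asymptotic triangle inequality, both of which hold precisely because $e$ is an idempotent arising from the field-factor decomposition in Theorem \ref{EP qmor}; once these are in hand the proof is a two-line sandwich. I expect the main obstacle, such as it is, to be bookkeeping with the mean-normalization convention for Hamiltonians (so that adding a constant behaves as stated) rather than anything substantive, since this Proposition is, as the text says, "an easy corollary of the definition of superheaviness."
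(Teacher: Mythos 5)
Your argument is correct: the paper itself gives no proof of this proposition (it is quoted from Entov--Polterovich \cite{[EP09]}), and your first route --- the sandwich $r=\inf_S H\leq\zeta(H)\leq\sup_S H=r$ using that a $\zeta_e$-superheavy set is automatically $\zeta_e$-heavy for an idempotent $e$ --- is exactly the standard Entov--Polterovich argument. Your second, self-contained route is also fine, with the small remark that the inequality $\zeta(G)\leq\zeta(H)+\zeta(G-H)$ is justified here by the spectral triangle inequality together with the fact that $H$ and $G-H=r-H$ Poisson-commute (so the $\#$-product is the pointwise sum) and $e\ast e=e$, rather than by the quasimorphism defect alone.
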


We end this section by giving a criterion for heaviness for {\lag}s, proved by {\FOOO} (there are earlier results with less generality, c.f. \cite{[Alb05]}). Let $L$ be a monotone {\lag} and suppose its Floer cohomology $HF^\ast (L,\rho)$ equipped with a $\C ^\ast$-local system $ \rho: H_1 (L;\Z) \longrightarrow \C^\ast $, c.f. \cite{BC12}, is non-zero. There is a ring homomorphism called the (length 0) closed-open map
$$\mathcal{CO}^0 : QH (X;\Lambda) \to HF^\ast (L,\rho)\footnote{The original notation used in \cite{[FOOO09]} for $\mathcal{CO}^0 $ is $i^\ast _{qm}$.}$$
which is a quantum analogue of the restriction map.

\begin{theo}[{\cite[Theorem 1.6]{[FOOO19]}}]\label{CO map heavy}
If $\mathcal{CO}^0 (e)\neq 0$ for an idempotent $e\in QH (X;\Lambda)$, then $L$ is $\zeta_e$-heavy.
\end{theo}

\begin{remark}\label{suphv semi-simple}
When $\zeta_e$ is {\homo}, e.g. when $e$ is a unit of a field factor of $QH (X;\Lambda)$ and $\zeta_e$ is an {\EP} {\qmor}, then heaviness and {\suphvness} are equivalent so Theorem \ref{CO map heavy} will be good enough to obtain the {\suphvness} of $L$.
\end{remark}

\subsection{Biran decomposition}\label{Biran {\decomp}}

In this section, we briefly review the notion of Biran {\decomp}, which was established by Biran in \cite{[Bir01]}, while comparing the conventions in \cite{[Bir01]} and \cite{[OU16]}. We emphasis that in this paper, the convention of Oakley--Usher \cite{[OU16]} is used.

We start with the Biran's setting \cite[page 412]{[Bir01]}. Let $X$ be a {\symp} K\"ahler {\mfd} $X=(X,\omega_{\text{Bir}})$ with an integral {\symp} form, i.e. $[\omega_{\text{Bir}}] \in H^2(X;\Z) $. Consider a (Donaldson) divisor $\Sigma$ which gives a {\polar} of degree $k$, i.e. the pair $(X,\Sigma)$ satisfies 
\begin{equation}\label{polar Biran}
    PD([\Sigma]) = k [\omega_{\text{Bir}}] \in H^2(X;\Z) .
\end{equation}

    The complement $X\backslash \Sigma$ has the structure of a Stein {\mfd} and thus, one can define the unstable set of a plurisubharmonic function which is called the skeleton $\Delta$. Biran proved that the complement of the skeleton $\Delta$, namely $X\backslash \Delta$, is symplectomorphic to a certain disk bundle (defined with appropriate connection/curvature: we require that the connection $1$-form $\alpha_{\text{Bir}} $ satisfies $\int_{\partial D^2= \{|z|=1\}} \alpha_{\text{Bir}} =1 $):
    \begin{equation}
    \begin{gathered}
        D^2=\{w=r \cdot e^{2\pi \sqrt{-1} \theta} \in \C  \colon r=|w|\leq 1 \} \hookrightarrow D\Sigma \xrightarrow[]{\pi} \Sigma \\
        w \mapsto (w,\zeta) \mapsto \zeta ,
    \end{gathered}
    \end{equation}    
    equipped with the {\symp} form
    \begin{equation}
        \omega_{\text{can};\text{Bir}} := \pi^\ast (k  \omega_{\text{Bir}}|_{\Sigma}) + d(r^2 \alpha_{\text{Bir}} ) ,
    \end{equation}
  where $\omega_{\text{Bir}}|_{\Sigma}:=i^\ast \omega_{\text{Bir}},\ i:\Sigma \hookrightarrow X$, that is,

    \begin{equation}\label{decomp Biran}
        (X\backslash \Delta , \omega_{\text{Bir}}) \xrightarrow[]{\simeq} ( D\Sigma , \frac{1}{k} \omega_{\text{can};\text{Bir}}) .
    \end{equation}

    Note that the area of a fiber disk of the disk bundle $( D\Sigma , \omega_{\text{can};\text{Bir}})$ is 
    $$ \int_{D^2=\{|z|<1 \}} d(r^2 \alpha_{\text{Bir}}) = \int_{\partial D^2} r^2 \alpha_{\text{Bir}} =1 .$$
     Thus, for the Biran {\decomp} with Biran's convention $( D\Sigma , \frac{1}{k} \omega_{\text{can};\text{Bir}})$, the area of the fiber disk is $\frac{1}{k}$, i.e. the radius of the disk is $1/\sqrt{\pi k}$.

    Next, we explain Oakley--Usher's convention in \cite{[OU16]}. The {\symp} form of $X$  $\omega_{\text{OU}}$ is scaled so that
    \begin{equation}\label{form Biran OU}
        \omega_{\text{OU}}=2 \pi \omega_{\mathrm{Bir}} .
    \end{equation}
    Note that unlike $\omega_{\text{Bir}}$, $\omega_{\text{OU}}$ does not represent an integral cohomology class, i.e. $[\omega_{\text{OU}}] \notin H^2(X;\Z)$. Oakley--Usher describes the aforementioned Biran {\decomp} from the relation
    \begin{equation}\label{polar OU}
         \tau \cdot  PD([\Sigma]) =  [\omega_{\text{OU}}] 
    \end{equation}
    for $\tau>0$. 
     Note that from the equations \eqref{polar Biran}, \eqref{polar OU}, and \eqref{form Biran OU}, we obtain
     \begin{equation}\label{tau and k}
         k = \frac{2 \pi }{\tau} .
     \end{equation}
     The Biran {\decomp} {\wrt} Oakley--Usher's convention is as follows:
      \begin{equation}\label{decomp OU}
          (X\backslash \Delta , \omega_{\text{OU}}) \xrightarrow[]{\simeq} ( D\Sigma , \frac{1}{k} \omega_{\text{can};\text{OU}})
      \end{equation}
        
    where
    $$ \omega_{\text{can};\text{OU}} := \pi^\ast (k  \omega_{\text{OU};\Sigma}) + d(r^2 \alpha_{\text{OU}} ) ,$$
    $\omega_{\text{OU}}|_\Sigma:=i^\ast \omega_{\text{OU}}$ where $i\colon \Sigma \hookrightarrow  X$ and $\alpha_{\text{OU}} $ is the connection $1$-form satisfying
    $$ \alpha_{\text{OU}} = 2\pi \alpha_{\text{Bir}} . $$
    
    From the equation \eqref{form Biran OU}, we have
    \begin{equation}\label{can form Biran OU}
        \omega_{\text{can};\text{OU}}=2 \pi \omega_{\text{can};\text{Bir}}
    \end{equation}
    and thus, by using \eqref{tau and k}, we have
   \begin{equation}
      \frac{1}{k} \omega_{\text{can};\text{OU}}  =\frac{2 \pi}{k}  \omega_{\text{can};\text{Bir}} = \tau \omega_{\text{can};\text{Bir}} 
    \end{equation}
    which means that for the Biran {\decomp} with Oakley--Usher's convention $( D\Sigma , \frac{1}{k} \omega_{\text{can};\text{OU}})$, the area of the disk is $\tau$, i.e. the radius of the fiber disk is $\sqrt{\tau/\pi }$.

    We look at the examples we will be focusing on.

    \begin{example}
    For $X=\C P^n$, $\omega_{\text{Bir}}$ is the Fubini--Study form scaled so that 
    $$ \int_{\C P^1} \omega_{\text{Bir}} = 1  . $$
    whereas $\omega_{\text{OU}}$ satisfies
    $$ \int_{\C P^1} \omega_{\text{OU}} = 2 \pi  . $$
    
For $X=Q^n$, which is the main case of our interest, $\omega_{\text{Bir}}$ and $\omega_{\text{OU}}$ are
\begin{equation}
    \begin{gathered}
        \omega_{\text{Bir}} = i^\ast \omega_{\C P^{n+1};\text{Bir}} ,\\
 \omega_{\text{OU}} = i^\ast \omega_{\C P^{n+1};\text{OU}} ,
    \end{gathered}
\end{equation}
where $i: Q^n \hookrightarrow \C P^{n+1}$.
    
    Now, we look at some {\polar}s.
    
    \begin{enumerate}
        \item $(X,\Sigma)= (\C P^n , \C P^{n-1}=\{z_{n+1}=0\}),\ (Q^n,Q^{n-1}=\{z_{n+1}=0\} \cap Q^n)$: As the degree of this {\polar} is $k=1$, we have
        $$ \tau = 2 \pi ,$$
        which implies that the fiber disk has radius $\sqrt{2}$.

        \item $(X,\Sigma)= (\C P^n , Q^{n-1})$: As the degree of this {\polar} is $k=2$, we have
        $$ \tau = 2 \pi /2 = \pi  ,$$
        which implies that the fiber disk has radius $1$.

    \end{enumerate}
    \end{example}

Biran decomposition has been also known as a nice way to construct interesting {\lag}s, which we call the Biran circle bundle construction and explain what it is. Let $L$ be a {\lag} {\smfd} in $\Sigma$. Consider the radius $r>0$ circle bundle associated to the disk bundle $D\Sigma$
$$ D\Sigma_{ |u| = r}:= \{u\in D\Sigma : |u| = r  \} .$$
The set
$$ \wt{L}_r := \pi_{ |u| = r} ^{-1} (L),\ \pi_{ |u| = r}: D\Sigma_{ |u| = r} \to \Sigma $$
defines a {\lag} {\smfd} in $D\Sigma$, which is a circle bundle over $L$. Note that $\pi_{ |u| = r}$ denotes the restricted projection $D\Sigma_{|u| = r} \to \Sigma$. Via the {\symp} identification \eqref{decomp Biran}, we can see $\wt{L}_r$ as a {\lag} {\smfd} in $X \backslash \Sigma $ or $X$.

When $L$ is a monotone in $\Sigma$, then there is a distinguished radius $r_0>0$ for which the lifted {\lag} {\smfd} $\wt{L}$ becomes also monotone in $X$ and according to \cite[{\propo} 6.4.1]{[BC09]}, it satisfies

\begin{equation}
    r_0 ^2 = \frac{2\kappa_L}{2\kappa_L+1}
\end{equation}
where $\kappa_L$ is the monotonicity constant for $L$ in $\Sigma$, i.e. $\omega_{\Sigma}|_{\pi_2(\Sigma,L)} = \kappa_L \cdot \mu_L|_{\pi_2(\Sigma,L)}$. We sometimes call the radius $r_0$ the monotone radius as well.

In the following, the lifted {\lag} {\smfd} $\wt{L}:=\wt{L}_{r_0}$ will always be this distinguished monotone {\lag} {\smfd} in $X$.

\subsection{Various ways to see quadrics}

The aim of this section is to see that the quadric {\hypsurf} $Q^n$ can be identified to some coadjoint orbit $\mathcal{O}_{\lambda}$ of the Lie algebra $\mathfrak{so}(n+1;\R)$.

First of all, as Oakley--Usher it is convenient to see $Q^n$ as the following quotient: first, we have 
    
    \begin{equation}
        \begin{aligned}
          \C P^{n} &=  \C^{n+1}\backslash \{0\} / \C^\ast \\
          & = \{z\in \C^{n+1} : |z|=1\} / S^1 .
        \end{aligned}
    \end{equation}
    Then,
    
    \begin{equation}
        \begin{aligned}
          Q^n &= \{z \in \C P^{n+1}: z_0 ^2 +z_1 ^2+ \cdots +z_{n+1} ^2 = 0 \} \\
          &= \{z \in \C ^{n+2}:|z|=2, z_0 ^2 +z_1 ^2+ \cdots +z_{n+1} ^2 = 0\} / S^1 .
        \end{aligned}
    \end{equation}
    This is the Oakley--Usher way to see $Q^n$. Now, by writing $z=x+iy$, we have
    
     \begin{equation}
        \begin{aligned}
          Q^n &=  \{z \in \C ^{n+2}:|z|=2, z_0 ^2 +z_1 ^2+ \cdots +z_{n+1} ^2 = 0\} / S^1 \\
          & = \{z=x+iy \in \C ^{n+2}:|x|=|y|=1, x \cdot y =0 \} / S^1 .
        \end{aligned}
    \end{equation}
    Thus, a point in $Q^n$ is a orthogonal frame $x,y$ in $\R^{n+2}$ where the rotations are identified, thus this defines a plane in $\R^{n+2}$. Thus,
    $$Q^n = Gr_\R ^+ (2, n+2) ,$$
    where $Gr_\R ^+ (2, n+2)$ denotes the real oriented Grassmanian. Now, the group $SO(n+2;\R)$ acts transitively on $Gr_\R ^+ (2, n+2)$ and the isotropy subgroup of the plane spanned by $(1,0,\cdots,0)$ and $(0,1,0,\cdots,0)$ is $SO(2;\R)\times SO(n;\R)$, thus,
    \begin{equation}\label{group action 1}
      Q^n = Gr_\R (2, n+2) = SO(n+2;\R)/ \left( SO(2;\R)\times SO(n;\R) \right).  
    \end{equation}

Now, $G:=SO(n+2;\R) $ also acts transitively on $\mathfrak{so}(n+2;\R)=\{A: A^t=-A \}$ by the adjoint action. We try to find a element $\lambda \in \mathfrak{so}(n+2;\R) $ so that the stabilizer of this action equals the stabilizer of the previous action, namely $SO(2;\R) \times SO(n;\R)$. In order to do this, we can consider 
\begin{equation}
  \lambda:= 
    \begin{pmatrix}
      0 & \lambda_1 &  0  & \cdots &  0 \\
      - \lambda_1 & 0 & 0  &\cdots & 0  \\
      0 & & \ddots  & &  0 \\
      \vdots & & & & \vdots \\
      0 & & & & 0 
    \end{pmatrix}
    \in \mathfrak{so}(n+2;\R).
\end{equation}
Thus, we now have
\begin{equation}\label{group action 2}
   SO(n+2;\R) / SO(2;\R) \times SO(n;\R)  \xrightarrow[]{\simeq} \mathcal{O}_{\lambda} .  
\end{equation}
 
From \eqref{group action 1} and \eqref{group action 2}, we conclude
\begin{equation}
   Q^n  \xrightarrow[]{\simeq} \mathcal{O}_{\lambda} .  
\end{equation}

To avoid confusion, we will denote this identification by $A$:
\begin{equation}\label{quadric as coadj orbit}
\begin{gathered}
   A: Q^n  \xrightarrow[]{\simeq} \mathcal{O}_{\lambda} ,\\
   z \mapsto A(z) .
\end{gathered}
\end{equation}

The coadjoint orbits are known to possess a canonical {\symp} structure given by the so-called Kirillov--Kostant--Souriau (KKS) form $\omega_{\text{KKS}}$ which satisfies
$$[\omega_{\text{KKS}}]=c_1(T\mathcal{O}_{\lambda})$$
and thus via \eqref{quadric as coadj orbit}, one obtains a KKS form on $Q^n$. The {\symp} form we work with in this paper needs to be appropriately scaled, see Section \ref{convention} for this matter.

\subsection{Gelfand--Zeitlin system for quadrics}\label{Gelfand--Zeitlin system}

In this section, we will review the completely integrable system called the Gelfand--Zeitlin (GZ) system for the quadric $Q^n$ via \eqref{quadric as coadj orbit}, as explained in \cite[Section 2.2]{[KimA]}.

For any $z \in Q^n$, for each $k,\ 2\leq k \leq n+2$, we take the left-upper $k \times k$-submatrix of $A(z)$ which we will denote by $A(z) ^{(k)}$. As $A(z)$ is a skew-symmetric $(n+2) \times (n+2)$ matrix, each $A(z) ^{(k)}$ is also skew-symmetric, and thus its eigenvalues are either all $0$ or $\pm \sqrt{-1} \nu ^{(k)} (z),\ 0\cdots, 0$ where $\nu ^{(k)} (z)>0$. Now, define the following map:
\begin{equation}\label{GZ system for quadric}
    \begin{gathered}
        \Phi: Q^n \longrightarrow \R ^n \\
        z \mapsto (\lambda_1 ^{(2)} (z),\cdots, \lambda_1 ^{(n+1)}(z)),
    \end{gathered}
\end{equation}
where
\begin{equation}
    \lambda_1 ^{(k)}(z):=
    \begin{cases}
    \nu ^{(k+1)} (z) \text{  if either $k \geq 2$ or ($k=1$ and $Pf(A(z) ^{(2)})\geq 0$)} \\
    -\nu ^{(2)} (z) \text{  $k=1$ and $Pf(A(z) ^{(2)})< 0$} ,
    \end{cases}
\end{equation}
where $Pf$ is the Phaffian.

Guillemin--Sternberg proved that $\Phi$ forms a completely integrable system.

\begin{theo}[{\cite{[GS83]}}]
The map $\Phi=(\Phi_1,\cdots, \Phi_n)$ in \eqref{GZ system for quadric} is a completely integrable system on $Q^n$. In fact, the $k$-th component $\Phi_k$ generates a {\hamil } $S^1$-action on $\Phi^{-1}(\{u=(u_1,\cdots,u_n ) \in \R^n: u_k \neq 0\})$.
\end{theo}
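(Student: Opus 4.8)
The plan is to verify that $\Phi$ is a completely integrable system by exhibiting $n = \tfrac{1}{2}\dim_{\R} Q^n$ functionally independent functions that pairwise Poisson-commute and are generically smooth, and then to identify the Hamiltonian flows of the individual components $\Phi_k$ with circle actions coming from the block structure of $\mathfrak{so}(n+2;\R)$. First I would set up the standard picture: on the coadjoint orbit $\mathcal{O}_\lambda \subset \mathfrak{so}(n+2;\R)^* \cong \mathfrak{so}(n+2;\R)$ with the KKS form, the function $A \mapsto f(A^{(k)})$ for an $\mathrm{ad}$-invariant function $f$ on $\mathfrak{so}(k;\R)$ generates the flow of the coadjoint action of the subgroup $SO(k;\R) \hookrightarrow SO(n+2;\R)$ (upper-left block) precomposed with the differential of $f$. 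Concretely, $\nu^{(k)}(z)^2$ is, up to a constant, the invariant $\tfrac12\,\mathrm{tr}\big((A(z)^{(k)})^2\big)$ restricted to the upper-left $k\times k$ block, so each $\lambda_1^{(k)}$ is a smooth function of a quadratic invariant on a nested chain of subalgebras.

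The key commutativity step is the classical Thimm trick (Guillemin--Sternberg, Thimm): invariants of the upper-left $k\times k$ block Poisson-commute with invariants of the upper-left $\ell \times \ell$ block whenever the two subalgebras are nested, i.e. for $k \le \ell$. This is because $f(A^{(k)})$ is invariant under the coadjoint action of $SO(\ell;\R)$ (the smaller block sits inside the larger one and the larger group's action preserves the smaller block's spectrum — here one uses that the chain $\mathfrak{so}(2) \subset \mathfrak{so}(3) \subset \cdots \subset \mathfrak{so}(n+2)$ is a chain with the required invariance, which for the $\mathfrak{so}$-chain is the statement that truncation intertwines the relevant actions). So $\{\lambda_1^{(k)}, \lambda_1^{(\ell)}\}$ vanishes for all pairs, giving $n$ Poisson-commuting functions. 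The count $n$ is exactly half of $\dim_\R Q^n = 2n$, so integrability will follow once generic functional independence is established. Functional independence on a dense open set I would check by a dimension/rank computation: on the locus where all $\nu^{(k)}(z) > 0$ and the eigenvalue interlacing inequalities (Gelfand--Zeitlin inequalities, from the minimax characterization of eigenvalues of nested symmetric/skew blocks) are strict, the differentials $d\lambda_1^{(2)}, \dots, d\lambda_1^{(n+1)}$ are linearly independent — this is the standard fact that the GZ polytope is full-dimensional and $\Phi$ is a submersion over its interior.

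For the last sentence of the statement — that $\Phi_k$ generates a Hamiltonian $S^1$-action on the open set $\Phi^{-1}(\{u_k \neq 0\})$ — I would argue that on the locus $\nu^{(k+1)} \neq 0$ the function $\lambda_1^{(k)} = \nu^{(k+1)}$ equals (up to sign) the moment map for the maximal-torus circle inside $SO(k+1;\R)$ acting on the upper-left block, and such a circle action is $2\pi$- (or $\pi$-, after normalizing $\lambda_1$) periodic away from the walls where the eigenvalue collides with $0$; smoothness of $\sqrt{\cdot}$ is fine precisely because $\nu^{(k+1)} \neq 0$ there. So the Hamiltonian vector field $X_{\Phi_k}$ is complete with periodic flow on that open set. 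The main obstacle I expect is not the commutativity (which is formal, via Thimm) but the careful verification of generic functional independence — i.e. that the common level sets are generically Lagrangian tori and not something degenerate — which requires knowing that the interlacing inequalities cut out a polytope of the right dimension and that $\Phi$ is submersive over its interior; I would handle this by citing the eigenvalue-interlacing (Cauchy) inequalities for skew-symmetric matrices and a direct rank computation at one well-chosen generic point, then invoking that the non-independence locus is closed and proper. I will assume the identification \ref{quadric as coadj orbit} of $Q^n$ with $\mathcal{O}_\lambda$ and the scaled KKS form as set up above.
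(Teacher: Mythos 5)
The paper does not actually prove this statement: it is quoted from Guillemin--Sternberg \cite{[GS83]} and used as a black box, so the only meaningful comparison is with the classical argument, and your sketch does follow that classical route (Thimm's trick on the nested chain $\mathfrak{so}(2;\R)\subset\cdots\subset\mathfrak{so}(n+2;\R)$, interlacing inequalities for generic independence, periodicity of the flows away from the walls). However, two of your justifications are wrong as written. First, the stated reason for Poisson-commutativity is false: an invariant function of the upper-left $k\times k$ block, such as $f(A^{(k)})$, is \emph{not} invariant under conjugation by $SO(\ell;\R)$ for $\ell>k$ --- conjugating by the larger block group does not preserve the spectrum of the smaller truncation (only the interlacing constraints survive, which is exactly why the image is a polytope and not a point). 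The correct mechanism runs in the opposite direction: the Hamiltonian flow of the smaller-block invariant $f\circ\pi_k$ at $A$ is coadjoint action by $\exp\bigl(t\,df(A^{(k)})\bigr)\in SO(k;\R)\subset SO(\ell;\R)$, and since this element is block-diagonal it conjugates $A^{(\ell)}$ inside $SO(\ell;\R)$, so it preserves any $SO(\ell;\R)$-invariant function of $A^{(\ell)}$; equivalently, one computes $\{F\circ\pi_k,G\circ\pi_\ell\}(A)=\langle A,[dF(A^{(k)}),dG(A^{(\ell)})]\rangle=\langle \pi_\ell A,[dF,dG]\rangle=0$ using $\mathrm{ad}^*_{dG(A^{(\ell)})}A^{(\ell)}=0$, i.e.\ the Ad$^*$-invariance of the \emph{larger}-block function. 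Your appeal to invariance of the smaller-block function under the larger group would, if true, make the whole theorem trivial, and it is precisely the false step.

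Second, for the circle actions: except for $\lambda_1^{(2)}$, which really is the moment map of the fixed $SO(2;\R)$ rotating the first two coordinates, the functions $\nu^{(k)}$ are not moment maps of a fixed maximal-torus circle of $SO(k;\R)$ acting on $Q^n$; the rotation generated at a point $A$ is conjugation by a one-parameter subgroup whose generator depends on $A$ (the normalized ``angular part'' of $A^{(k)}$, with eigenvalues $\pm i,0,\dots,0$). Periodicity on $\{\nu^{(k)}\neq 0\}$ follows from this normalization, not from identifying the flow with a fixed subgroup action, and the nonvanishing hypothesis is needed both for smoothness of the square root and for the generator to be well defined. With these two repairs your outline reproduces the Guillemin--Sternberg proof; the functional-independence step via strict interlacing and a rank computation at one generic point is fine as sketched.
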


In \cite[{\propo} 3.1]{[NNU]}, {\NNU} computed the GZ system $\{\lambda_1 ^{(k)}\}_{2 \leq k \leq n+1}$ for $Q^n$ 
$$\lambda_1 ^{(k)}:Q^n \longrightarrow \R$$
which is as follows:

\begin{equation}
        \begin{aligned}
          \lambda_1 ^{(2)} (z) & = \frac{\lambda}{|z|^2} i(z_1 \ovl{z_2} - \ovl{z_1}z_2),\\
          (\lambda_1 ^{(k)} )^2 (z) & = -   \sum_{1\leq i < j \leq k } \left( \frac{z_i \ovl{z_j} - \ovl{z_i}z_j}{|z|^2}  \right)^2 \\
           & = \left( \frac{\lambda}{|z|^2} \right)^2 ( ( \sum_{j=1} ^k |z_j|^2 )^2 - |\sum_{j=1} ^k z_j ^2| ^2 ) ,\ k \geq 3,\\
           \lambda: & = 2.
        \end{aligned}
    \end{equation}

\begin{remark}
In Kim's paper \cite{[KimA]}, he uses the notation $\{u_k\}_{1 \leq k \leq n}$, where the correspondence is given by 
$$\lambda_1 ^{(k)} = u_{k-1} .$$
\end{remark}

The monotone GZ fiber is expressed as follows \cite[{\propo} 3.7]{[KimA]}:
\begin{equation}\label{GZ torus}
    T^n _{\text{GZ}}:= \Phi_{Q^n} ^{-1}\left( (0,2\cdot \frac{1}{n},2\cdot \frac{ 2}{n},\cdots, 2\cdot\frac{n-1}{n} ) \right) 
\end{equation}
where we used the notation
\begin{equation}
    \begin{gathered}
        \Phi_{Q^n} :Q^n \longrightarrow \R ^n \\
        \Phi_{Q^n} (z):=(\lambda_1 ^{(2)},\lambda_1 ^{(3)},\cdots,\lambda_1 ^{(n+1)}) (z)  .
    \end{gathered}
\end{equation}

\subsection{Convention}\label{convention}
In this section, we will remind some conventions that is used in Section \ref{Proof 1} and \ref{Proof 2}, which are taken from \cite{[OU16]}.

For the {\quadr} $Q^n \subset \C P^{n+1}$, we equip a {\symp} form $\omega$ such that
$$ \omega =  i^\ast \omega_{\C P^{n+1} }$$
where
$$i:Q^n \hookrightarrow \C P^{n+1} $$
is the inclusion and $ \omega_{\C P^{n+1} }$ is the Fubini--Study form scaled so that 
$$ \int_{\C P^1} \omega_{\C P^{n+1} } =2 \pi  .$$

This makes the {\monoconst} to be equal to $\frac{2\pi}{n}$, i.e.
    $$ \omega|_{\pi_2(Q^n)} =   \frac{2\pi}{n} \cdot c_1(TQ^n)|_{\pi_2(Q^n)} . $$
In \cite{[KimA]} and \cite{[NNU]}, they scale the {\symp} form on $Q^n$ so that the {\monoconst} becomes $1$. In fact, they use the Kirillov--Kostant--Souriau (KKS) form $\omega_{\text{KKS}}$ that satisfies 
$$[\omega_{\text{KKS}}]=c_1(TQ^n).$$
These different choices of the normalization cause some rescaling in the results in \cite{[KimA]} and \cite{[NNU]} and in this paper, we use their results in the scaled form which amounts to putting $\lambda=2\pi$ instead of $\lambda =n$ in their results.

\section{Proofs of Theorem \ref{lags in Qn} (1), (2)}\label{Proof 1}

\subsection{Toric degeneration}\label{Toric degeneration}

In this section, as we briefly review {\tordeg}s of {\symp} {\mfd}s and of integrable systems. The latter is a special class of the former which was introduced by {\NNU} in \cite{[NNU10]} that makes the integrable structure on a {\symp} {\mfd} and the toric integrable structure on its degenerated toric variety compatible. We study the particular case of the quadric {\hypsurf} $Q^n$ and we specify the {\tordeg} for the quadric {\hypsurf} $Q^n$ which we will use in this paper. We define two distinguished {\lag}s in $Q^n$, namely the monotone GZ torus $T^n _{\text{GZ}}$ and the {\lag} sphere $S^n _{\text{van}}:=\{x \in \C P^{n+1} : x_{0} ^2+x_{1} ^2+ \cdots +x_{n-1} ^2= x_{n} ^2,\ x_j \in \R \}$. We also discuss its {\suppot}, which was computed by Y. Kim in \cite{[KimA]}.

\begin{definition}[{\cite[Definition 2]{[HK15]},\cite[Section 1]{[Eva]}}]
A {\tordeg} of a {\symp} {\mfd} $X$ is a flat family
$\pi: \mathfrak{X} \to \C$
whose fibers $\{X_t:=\pi^{-1}(t)\}_{t\in \C}$ satisfy the following properties:
\begin{enumerate}
    \item For $t\neq 0$, the fiber $X_t$ is smooth and $X_1$ is isomorphic to $X$.
    \item For $t = 0$, the fiber $X_0$ is a toric variety that is not smooth.
    \item The fibres $X_t$ are projective subvarieties of the same projective space, i.e. there is a morphism $f : \mathfrak{X} \to \C P^N$ such that for every $t\in \C$, $f_t := f|_{X_t} : X_t \to \C P^N$ is an embedding.
\end{enumerate}
\end{definition}

    \begin{remark}\label{all fibers symp}
 In fact, it turns out that all the smooth fibers are symplectomorphic to each other; see \cite[Lemma 1.1]{[Eva]}.

    \end{remark}

We provide an example of a {\tordeg} for the quadric {\hypsurf} $Q^n$ considered by {\NNU} in \cite{[NNU]}, which we will be using throughout the paper.

\begin{example}\label{tordeg for quadrics}
Consider the following:
\begin{equation}
    \mathfrak{X}:=\{(z,t)   \in \C P^{n+1} \times \C : z_0 ^2 + z_1 ^2 +z_2 ^2 +t(z_3 ^2+ \cdots+ z_{n+1} ^2) =0  \} .
\end{equation}
    We define
    \begin{equation}
        \begin{gathered}
            \pi \colon \mathfrak{X} \xrightarrow[]{} \C \\
            (z,t) \mapsto t .
        \end{gathered}
    \end{equation}
 Then for $t =1$, the fiber $X_1$ is isomorphic to $Q^n$ and for $t=0$, the fiber $X_0$ is a {\sing} toric variety (that is isomorphic to the weighted projective space $\C P(1,1,1,2,\cdots,2)$), which is an orbifold whose singular locus $X_0 ^{\text{sing}}$ is 
 \begin{equation}\label{quadric singular locus}
     X_0 ^{\text{sing}}=\{ [0:0:0:z_3:\cdots:z_{n+1}] \in \C P^{n+1} \} .
 \end{equation}
\end{example}

\begin{remark}
    In the {\tordeg} considered in Example \ref{tordeg for quadrics}, the {\sing} locus \eqref{quadric singular locus} is not an isolated set (each {\sing} point is a transverse $A_1$-{\sing}ity). This is very different from the {\tordeg}s for del Pezzo surfaces that we will consider in Section \ref{Proof del Pezzo}, and makes the study of the geometry of vanishing cycles more difficult. 
\end{remark}

We now define a {\tordeg} of a completely integrable system.

\begin{definition}[{\cite[Definition 1.1]{[NNU10]}}]\label{tordeg int system}
Let $X^{2n}=(X,\omega)$ be a {\symp} {\mfd} and $\Phi:X \to \R^n$ a completely integrable system {\wrt} $\omega$. A {\tordeg} of the completely integrable system is a {\tordeg} of $X$
$$\pi: \mathfrak{X} \longrightarrow \C $$
with the following data:
\begin{itemize}
    \item a (piecewise smooth) path $\gamma:[0,1 ] \to \C,\ \gamma(0)=1,\ \gamma(1)=0$,
    
    \item a continuous map
    $$\wt{\Phi}: \mathfrak{X}|_{\gamma([0,1])} \to \R ^n ,$$
    
    \item a flow $\phi_t$ on $\mathfrak{X}|_{\gamma([0,1])}$ defined away from the singular locus of $\mathfrak{X}|_{\gamma([0,1])}$,
\end{itemize}

 satisfy the following properties:
\begin{enumerate}
\item For each $t$, $\Phi_t:=\wt{\Phi}|_{X_t} :X_t \to \R^n$ defines a completely integrable system where for $t=0$, it coincides with the toric system $\Phi_0:X_0 \to \R^n$ and for $t=1$, it coincides with $\Phi:X \to \R^n$.
 
    \item Away from the singular locus, the flow $\phi_t$ restricts to a symplectomorphism between $X_s$ and $X_{s-t}$ that preserves the integrable system: 
    
    \begin{equation}
        \begin{tikzcd}
        (X_s,\omega_s) \arrow[rd,"\Phi_t"] \arrow[rr,"\phi_t"] &  &  (X_{s-t},\omega_{s-t})  \arrow[ld,"\Phi_{s-t}"]\\
        & \R^n & .
        \end{tikzcd}
    \end{equation}
  
\end{enumerate}

\end{definition}

We now would like to construct a {\tordeg} for the GZ system on $Q^n$. The methods of {\NNU} from \cite{[NNU10]} and Harada--Kaveh from \cite{[HK15]} which are based on the \textit{gradient-{\hamil} vector field} (due to Ruan \cite{[Rua01]}) allow us to do that starting from the {\tordeg} of $Q^n$ in Example \ref{tordeg for quadrics}, which is of our main interest.

We briefly review Ruan's idea. Let $\mathfrak{X} $ be an algebraic variety equipped with a K\"ahler form $\wt{\omega}$. Let $\pi:\mathfrak{X} \to \C  $ be a morphism (i.e. an algebraic map) and $\nabla \Re (\pi)$ be the gradient vector field on (the smooth locus of) $\mathfrak{X}$ {\wrt} to the K\"ahler form $\wt{\omega}$ where $\Re (\pi)$ is the real part of the holomorphic function $\pi$. Now, the gradient-{\hamil} vector field $V$ on the smooth locus of $\mathfrak{X}$ is defined by
\begin{equation}
    V:=-\frac{\nabla \Re (\pi)}{|\nabla \Re (\pi)|^2}.
\end{equation}
From the normalization, it follows that 
\begin{equation}
    V (\Re (\pi))  =-1.
\end{equation}

Although the flow of the gradient-{\hamil} vector field $V$ is not complete due to it not being defined on the singular loci, Harada--Kaveh \cite{[HK15]} proved that one can extend the flow continuously on the whole $\mathfrak{X}$ and obtain 
\begin{equation*}
    \phi^t \colon \mathfrak{X} \to \mathfrak{X}
\end{equation*}
which satisfies $\phi^t (X_{t})=X_{0}$.

By using this gradient-{\hamil} flow applied to the {\tordeg} of $Q^n$ in Example \ref{tordeg for quadrics}, {\NNU} obtained a {\tordeg} for the GZ system on $Q^n$. 

\begin{prop}[{\cite[{\propo} 3.1]{[NNU]}, \cite[{\propo} 2.6]{[KimA]}}]\label{GZ system compatible}
The {\tordeg} $\pi: \mathfrak{X} \to \C$ in Example \ref{tordeg for quadrics} together with the gradient-{\hamil} flow $\phi_t$ of $\pi$ defines a {\tordeg} of the GZ system on $Q^n$ (by taking the path $\gamma(t):=1-t$ and $\Phi_t:=\Phi_0 \circ \phi_t$.)
\end{prop}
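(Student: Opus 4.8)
The plan is to verify directly the two conditions of Definition~\ref{tordeg int system} for the following data: $\phi_t$ is the flow of the gradient-{\hamil} vector field $V = -\nabla\Re(\pi)/|\nabla\Re(\pi)|^2$ of Section~\ref{Toric degeneration}, run along the real segment joining the central fibre $\pi^{-1}(0)\cong\C P(1,1,2,\cdots,2)$ to the fibre $\pi^{-1}(1)\cong Q^n$; and $\wt{\Phi}$ is obtained by transporting the toric moment map $\Phi_0$ of $\pi^{-1}(0)$ along this flow, so that on each fibre $\Phi_t = \Phi_0 \circ \phi_t$. With these choices, condition~(2) of Definition~\ref{tordeg int system} — that away from the singular loci $\phi_t$ restricts to a {\symplecto} between fibres intertwining the $\Phi_t$'s — is immediate from the way $\wt{\Phi}$ is defined, once the basic properties of the flow of $V$ are in hand. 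So the real content is in condition~(1): that each $\Phi_t$ is genuinely completely integrable, that at the toric end it is the toric moment map, and, crucially, that at the $Q^n$ end it is the Gelfand--Zeitlin system $\Phi$ of Section~\ref{Gelfand--Zeitlin system}.

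First I would record the standard properties of the gradient-{\hamil} vector field, due to Ruan \cite{[Rua01]} and Harada--Kaveh \cite{[HK15]}. Since $\mathfrak{X}$ inherits a Kähler form from $\C P^{n+1}\times\C$ and $\pi$ is {\holo}, the Cauchy--Riemann equations give $\nabla\Re(\pi) = \pm J\nabla\Im(\pi)$, so, where it is defined, $V$ is a nonzero (functional) multiple of the {\hamil} vector field of $\Im(\pi)$: it is tangent to the level sets of $\Im(\pi)$, it satisfies $V(\Re(\pi)) = -1$, and its flow carries the fibres of $\pi$ over the real axis to one another by a {\symplecto} of the fibrewise Kähler forms. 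Although this flow is incomplete and undefined over $\bigcup_t X_t^{\text{sing}} = \C P(1,1,2,\cdots,2)^{\text{sing}}$, Harada--Kaveh \cite{[HK15]} show it extends to a continuous, fibre-preserving map $\phi_t$ on all of $\mathfrak{X}|_{\gamma([0,1])}$ which collapses precisely the vanishing locus — the real {\lagsph} $S^n _{\text{van}}$ — onto $\C P(1,1,2,\cdots,2)^{\text{sing}}$. As $\pi^{-1}(0)\cong\C P(1,1,2,\cdots,2)$ is a toric (orbifold) variety — identified in Example~\ref{tordeg for quadrics} — it carries a moment map $\Phi_0$ onto its moment polytope; transporting $\Phi_0$ by $\phi_t$ defines $\wt{\Phi}$, and each $\Phi_t$ is then completely integrable on every smooth fibre because $\phi_t$ is a fibrewise {\symplecto} there, its components generating {\hamil} torus actions wherever the toric ones do, while at the toric end $\Phi_t = \Phi_0$ by construction.

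The main obstacle, and the input genuinely specific to $Q^n$, is matching the other end: showing that dragging the weighted-projective moment map all the way back to $Q^n$ reproduces the explicit formulas $\{\lambda_1^{(k)}\}_{2\leq k\leq n+1}$ of Section~\ref{Gelfand--Zeitlin system}. Here I would follow {\NNU} \cite{[NNU]}: the family \ref{tordeg of our interest} is chosen precisely so that the residual $SO(2;\R)\times SO(n;\R)$-symmetry and the Pfaffian/eigenvalue functions $\nu^{(k)}$ defining the GZ chain are compatible with the coordinate circle actions that degenerate to the torus action on the central fibre. Concretely, on the dense open subset of $Q^n$ on which every $\lambda_1^{(k)}$ is smooth and generates an $S^1$-action, one checks that the collapse map intertwines these $S^1$-actions with the corresponding coordinate circle actions on $\C P(1,1,2,\cdots,2)$; since two continuous completely integrable systems on $Q^n$ that agree on a dense set and induce the same torus action there must coincide, this pins down the $Q^n$-end system as $\Phi$, up to the global rescaling ($\lambda = 2\pi$ in place of $\lambda = n$) forced by the normalisation of the {\symp} form in Section~\ref{convention}. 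The only delicate steps are thus (a) the continuous extension of the gradient-{\hamil} flow over the singular fibre, for which one invokes the general theorem of Harada--Kaveh rather than re-proving it, and (b) the identification just described; the remaining checks required by Definition~\ref{tordeg int system} are formal.
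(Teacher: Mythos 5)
Your proposal is consistent with how the paper handles this statement: the paper does not prove Proposition \ref{GZ system compatible} at all, but quotes it from \cite[{\propo} 3.1]{[NNU]} and \cite[{\propo} 2.6]{[KimA]}, after recalling only the Ruan gradient-{\hamil} vector field and the Harada--Kaveh continuous extension --- exactly the ingredients you assemble, with the two hard steps (continuity of the extended flow; identification of $\Phi_0\circ\phi_1$ with the GZ system) deferred to the same references. Two caveats on your own sketch of the second step. First, your dense-set argument is weaker than you state: intertwining the $S^1$-actions only determines the moment map up to an automorphism of the lattice and additive constants, so it does not by itself ``pin down'' $\Phi$; the actual {\NNU} argument is cleaner and different --- the functions $\lambda_1^{(k)}$ are defined by the same formulas on all of $\mathfrak{X}$, their {\hamil} vector fields are tangent to the fibres of $\pi$, hence they Poisson-commute with $\Re(\pi)$ and $\Im(\pi)$ and are preserved by the gradient-{\hamil} flow, and on $X_0$ they give the toric moment map. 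Second, the parenthetical claim that the extended flow collapses \emph{precisely} $S^n_{\text{van}}$ onto $X_0^{\text{sing}}$ contradicts {\propo} \ref{vanishing sphere}: the sphere is the vanishing locus of only a totally real subset of the singular locus, and $\phi_1^{-1}(X_0^{\text{sing}})$ is in general strictly larger; this slip is not load-bearing, but should be removed.
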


The compatibility between the GZ system on $X=Q^n$ and the toric system on $X_0 (\simeq \C P(1,1,1,2,\cdots, 2))$ implies that the monotone GZ torus is sent to a toric torus through the {\tordeg} with the gradient-{\hamil} flow, i.e. the monotone GZ torus fiber $T^n _{\text{GZ}}= \Phi^{-1}(x_0)$ satisfies
\begin{equation}\label{GZ torus is a toric fiber}
    \phi_1 (T^n _{\text{GZ}}) = \Phi_0 ^{-1} (x_0) ,
\end{equation}
where $x_0$ is the barycenter of the moment polytope of $\Phi_0$.

    The map $\phi_1$ allows us to define the vanishing loci.

    \begin{definition}
    Let $S$ be a subset of the singular locus of $X_0$, i.e. $S \subset X_0 ^{\text{sing}}$. The vanishing locus corresponding to $S$ is the set $\phi_1 ^{-1}(S)$.
    \end{definition}

Now, we get back to the {\tordeg} \eqref{tordeg for quadrics}, \eqref{GZ system compatible} for $Q^n$ which is of our specific interest.

\begin{prop}\label{vanishing sphere}
The {\lag} sphere 
$$ S^n = \{x \in \C P^{n+1} : x_{0} ^2+x_{1} ^2+ \cdots +x_{n-1} ^2= x_{n} ^2,\ x_j \in \R \} \subset Q^n $$
is the vanishing locus of the set
$$ \{[0:0:0:x_3:\cdots:x_{n}:i x_{n+1}] \in \C P^{n+1} :x_{j} \in \R, \ \forall 3 \leq j \leq n+1  \} \subset X_0 ^{\text{sing}} . $$
\end{prop}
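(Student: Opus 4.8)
The plan is to trace the gradient-Hamiltonian flow $\phi_t$ backwards from the singular locus of $X_0 = \C P(1,1,2,\dots,2)$ and identify what it picks up in $X_1 = Q^n$. Concretely, I would first set up the real locus of the total space $\mathfrak{X}$, i.e.\ look at points $(z,t)$ with $z\in\C P^{n+1}$, $t\in[0,1]$ satisfying $z_0^2+z_1^2+z_2^2+t(z_3^2+\dots+z_{n+1}^2)=0$. The key observation is that the equation defining $\mathfrak{X}$, the Fubini--Study Kähler form, and hence the gradient-Hamiltonian vector field $V=-\nabla\Re(\pi)/|\nabla\Re(\pi)|^2$ all have real coefficients, so $V$ is tangent to a suitable ``real form'' of $\mathfrak{X}$. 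Since the target set in $X_0$ consists of points $[0:0:0:x_3:\cdots:x_n:ix_{n+1}]$ with $x_j\in\R$, one is led to the totally real slice where $z_0,\dots,z_n$ are real and $z_{n+1}$ is purely imaginary; I would check that this slice is $\phi_t$-invariant and that on it the defining equation becomes $x_0^2+\dots+x_{n-1}^2+x_n^2+t(x_3^2+\dots+x_n^2 - x_{n+1}^2)=0$, a real quadric, whose smooth fibers are exactly the $S^n$ described (after absorbing $t$ into a rescaling of the $x_3,\dots,x_n,x_{n+1}$ coordinates).

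The second step is to run the flow along this one-parameter family of real quadrics. At $t=1$ the slice is $\{x_0^2+\dots+x_n^2 = x_{n+1}^2,\ x_j\in\R\} = \{x_0^2+\dots+x_{n-1}^2 = x_n^2 - x_{n+1}^2,\dots\}$ — I need to match this carefully to the stated $S^n = \{x_0^2+\dots+x_{n-1}^2 = x_n^2\}$, which is the real projective quadric of signature $(n,1)$, diffeomorphic to $S^n$. As $t\to 0$ the terms $x_3^2,\dots,x_n^2, x_{n+1}^2$ get suppressed and the limiting equation is $x_0^2+x_1^2+x_2^2 = 0$, which over the reals forces $x_0=x_1=x_2=0$; the remaining coordinates $[0:0:0:x_3:\cdots:x_{n+1}]$ are unconstrained except that $x_{n+1}$ should be read in $i\R$ after the slice identification, giving precisely the target set. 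So the picture is: the family of real quadrics inside $\mathfrak{X}$ collapses, under $\pi$, onto the $t$-axis, and the gradient-Hamiltonian flow $\phi_1$ maps the $t=1$ member $S^n$ onto the $t=0$ member, which is the singular-locus subset in the statement. Reversing, $\phi_1^{-1}$ of that subset is $S^n$, which is the definition of the vanishing locus.

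The main obstacle I expect is the careful verification that the gradient-Hamiltonian flow genuinely preserves the totally real slice all the way to $t=0$, including through the (near-)singular behaviour: the vector field $V$ blows up where $\nabla\Re(\pi)=0$, and one must invoke Harada--Kaveh's continuous extension of the flow (cited before Proposition \ref{GZ system compatible}) to make sense of $\phi_1$ on the singular fiber, then argue that the continuous extension still respects the real slice by a limiting/density argument. A secondary point is bookkeeping with the coordinate rescaling: the parameter $t$ enters the defining equation as a coefficient, so to see the generic fiber as a \emph{fixed} copy of $S^n$ one rescales $(x_3,\dots,x_{n+1})\mapsto(\sqrt{t}\,x_3,\dots,\sqrt{t}\,x_{n+1})$, and I should confirm this rescaling is compatible with (the real part of) the Kähler structure up to the flow's reparametrization, so that ``vanishing locus'' is computed with the correct $\phi_1$.

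Once these are in place, the proof is essentially the explicit computation: restrict everything to the real slice, observe $\Re(\pi)$ restricted there is just the real function $t\mapsto t$ up to a positive factor (so its gradient flow moves $t$ monotonically), and read off the endpoints. I would present it as: (i) identify the real slice and check invariance; (ii) solve the defining equation on the slice at $t=1$ and at $t=0$ to get $S^n$ and the stated subset respectively; (iii) invoke Harada--Kaveh to conclude $\phi_1$ carries one to the other; (iv) conclude $\phi_1^{-1}$ of the subset equals $S^n$, i.e.\ $S^n$ is the vanishing locus.
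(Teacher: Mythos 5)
Your argument is correct and is essentially the paper's proof: the paper packages your totally real slice as the fixed-point set of the anti-symplectic involution $\tau([z_0:\cdots:z_{n+1}])=[\ovl{z_0}:\cdots:\ovl{z_n}:-\ovl{z_{n+1}}]$, which preserves the K\"ahler form and the gradient-Hamiltonian field, so the (Harada--Kaveh extended) flow carries $\Fix(\tau)\cap X_1=S^n_{\text{van}}$ onto $\Fix(\tau)\cap X_0$, the stated subset of the singular locus. (Only a cosmetic slip: on the slice the defining equation should read $x_0^2+x_1^2+x_2^2+t(x_3^2+\cdots+x_n^2-x_{n+1}^2)=0$, which is consistent with the $t=1$ and $t=0$ endpoints you compute.)
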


\begin{remark}
Because of {\propo} \ref{vanishing sphere}, we will denote the sphere $S^n$ by $S^n _{\text{van}}$.
\end{remark}

\begin{proof}[Proof of {\propo} \ref{vanishing sphere}]

Consider the following anti-{\symp} involution:
    \begin{equation}
        \begin{gathered}
        \tau : \mathfrak{X} \to \mathfrak{X} \\
        ([z_0 :z_1 :\cdots :z_{n}:z_{n+1}],t) \mapsto ([\ovl{z_0} :\ovl{z_1} :\cdots :\ovl{z_{n}}: - \ovl{z_{n+1}} ],\overline{t}).
        \end{gathered}
    \end{equation}
     For each $t\in [0,1]$, this restricts to an anti-{\symp} involution on $X_t$ :
     \begin{equation}
        \begin{gathered}
        \tau : X_t \to X_t \\
        ([z_0 :z_1 :\cdots :z_{n}:z_{n+1}],t) \mapsto ([\ovl{z_0} :\ovl{z_1} :\cdots :\ovl{z_{n}}: - \ovl{z_{n+1}}],t) .
        \end{gathered}
    \end{equation}
    By using that the K\"ahler metric $g$ (corresponding to the K\"ahler form $\wt{\omega}=\omega_{\FS} \oplus \omega_{\text{stand}}$) on $\mathfrak{X}$  is preserved by the anti-{\symp} involution $\tau$, i.e. $\tau^\ast g=g$, we can show that the gradient-{\hamil} vector field $V$ along $\mathfrak{X}|_{[0,1]}$ is also preserved by the anti-{\symp} involution $\tau$, i.e. $\tau_\ast V (z,t) =V(z,t)$ for $(z,t ) \in \mathfrak{X}|_{[0,1]}$ by 
\begin{equation*}
    \begin{aligned}
        g ( \tau_\ast \nabla \Re (\pi) , W)
        &= (\tau^\ast g) (  \nabla \Re (\pi) , \tau^{-1} _\ast W) \\
        & = g  (  \nabla \Re (\pi) , \tau _\ast W) \\
        & = d \Re (\pi) ( \tau _\ast W) \\
        & = d (\Re (\pi) \circ \tau) ( W) \\
        & = d \Re (\pi)  ( W) \\
        & = g ( \nabla \Re (\pi) , W)
    \end{aligned}
\end{equation*}
for any $W \in T_{(z,t)} \mathfrak{X}$ with $(z,t ) \in \mathfrak{X}|_{[0,1]}$. Note that the fifth line uses that $(z,t ) \in \mathfrak{X}|_{[0,1]}$ (more precisely, that $t \in \R$).
    
    Thus, 
\begin{equation*}
    \phi^t (\Fix(\tau) \cap X_1 )=\Fix(\tau) \cap X_{1-t},
\end{equation*}
where 
\begin{equation}
        \begin{aligned}
           \Fix(\tau) \cap X_{1-t}= \{x \in \C P^{n+1} : x_{0} ^2+x_{1} ^2 +x_{2} ^2 + (1-t)(x_{3} ^2+\cdots +x_{n} ^2+ (ix_{n+1}) ^2)=0 ,\ x_j \in \R \} .
        \end{aligned}
    \end{equation}
The $t=1$ case will give us that 
   \begin{equation*}
    \phi^1 (\Fix(\tau) \cap X_1 )=\Fix(\tau) \cap X_{0},
\end{equation*}
    where
    $$\Fix(\tau) \cap X_1  = \{(x,1) \in \C P^{n+1}\times \C : x_{0} ^2+x_{1} ^2+ \cdots +x_{n} ^2= x_{n+1} ^2 \}  ,$$
    which is equivalent to the {\lag} sphere in $Q^n$ that we are interested in, namely
\begin{equation*}
     S^n = \{x \in \C P^{n+1} : x_{0} ^2+x_{1} ^2+ \cdots +x_{n} ^2= x_{n+1} ^2,\ x_j \in \R \}
\end{equation*}
under the identification between $Q^n$ and $X_1$, and

    $$\Fix(\tau) \cap X_{0} = \{[0:0:0:x_3:\cdots:x_{n}:i x_{n+1}] \in \C P^{n+1} :x_{j} \in \R, \ \forall 3 \leq j \leq n+1  \} \subset X_0 ^{\text{sing}}.$$
    Thus, $S^n$ is the vanishing locus of the singular locus
$$ \{[0:0:0:x_3:\cdots:x_{n}:i x_{n+1}] \in \C P^{n+1} :x_{j} \in \R, \ \forall 3 \leq j \leq n+1  \} . $$
\end{proof}

\begin{remark}
    The argument in the above proof is essentially the same as \cite[Lemma 1.20]{[Eva]}, which is a very useful method to identify vanishing cycles based on \cite[Lemma 1.17]{[Eva]}.
\end{remark}

\begin{remark}
    Note that $S^n$ is just a part of the vanishing locus of the singular locus of $X_0$ and we can detect other parts of the vanishing locus by taking different anti-{\symp} involutions: for example, by looking at the fixed locus of
    \begin{equation}
        \begin{gathered}
        \tau' : X_t \to X_t \\
        ([z_0 :z_1 :\cdots :z_{n}:z_{n+1}],t) \mapsto ([\ovl{z_0} :\ovl{z_1} :\cdots :- \ovl{z_{n}}: - \ovl{z_{n+1}}],t) ,
        \end{gathered}
    \end{equation}
   we can conclude that  
   \begin{equation*}
     \{x \in \C P^{n+1} : x_{0} ^2+x_{1} ^2+ \cdots +x_{n-1} ^2=x_{n} ^2 + x_{n+1} ^2,\ x_j \in \R \} \simeq S^1 \times S^{n-1}
\end{equation*}
is also a part of the vanishing locus.
\end{remark}

Nishinou--Nohara--Ueda applied {\tordeg}s of GZ systems to compute the {\suppot} of the monotone {\lag} torus fiber for some {\tordeg}s in \cite{[NNU10],[NNU]}, which enlarged the cases where one can compute the {\suppot}. This idea inspired {\FOOO} and Y. Kim to compute the {\suppot} of the Chekanov torus in $S^2 \times S^2 \simeq Q^2$ \cite{[FOOO12]} and the GZ torus $T^n _{\text{GZ}}$ in $Q^n$ \cite{[KimA]}, respectively. We now summarize Y. Kim's work from \cite{[KimA]}.

According to Y. Kim, the {\suppot} $ W_{T^n _{\text{GZ}}} : (\C ^\ast)^n \to \C $\footnote{More precisely, the {\suppot} for a {\lag} torus $L \simeq T^n$ is a map $ W_{L} : Hom (H_1(L;\Z),\C^\ast) \to \C $, but it is common to identify the set of local systems $Hom (H_1(L;\Z),\C^\ast)$ with $(\C ^\ast)^n$ by taking a $\Z$-basis of $H_1(L;\Z)$.} of $T^n _{\text{GZ}}$ takes the following form:
\begin{equation}\label{suppot quadric}
    W_{T^n _{\text{GZ}}}(z) = \frac{1}{z_{n}} + \frac{z_{n}}{z_{n-1}} + \cdots + \frac{z_{2}}{z_{1}} + 2z_{2} +z_1 z_2 .
\end{equation}

This has $n$ different (non-{\degen}) critical points $ \rho_j,\ j=0,1,\cdots,n-1$:
\begin{equation}\label{crit point of suppot quadric}
      \rho_j:=(1,\xi_j ^{-(n-1)},\xi_j ^{-(n-2)},\cdots,\xi_j ^{-2},\xi_j ^{-1} ) 
    \end{equation}
where $\xi_j = 4^{1/n} \exp(\frac{2\pi \sqrt{-1} }{n} \cdot j),\ j=0,1,\cdots,n-1$ (each $\xi_j$ satisfies $\xi_j ^n= 4$), and the critical values are as follows:
\begin{equation}\label{crit value of suppot quadric}
        W_{T^n _{\text{GZ}}}( \rho_j  ) = n \cdot \xi_j . 
    \end{equation}
    
This implies that there are $n$ different local systems $\rho_j: H_1(T^n _{\text{GZ}} ;\Z) \to \C^\ast,\ j=1,\cdots,n$ so that the Floer homology of $T^n _{\text{GZ}}$ {\wrt} these local systems are non-zero:
\begin{equation}
    HF(T^n _{\text{GZ}},\rho_j) \neq 0 .
\end{equation}

\begin{remark}
As Y. Kim points out in \cite{[KimA]}, {\FOOO}'s {\suppot} for the Chekanov torus in $S^2 \times S^2 \simeq Q^2$  \cite{[FOOO12]} coincides with the $n=2$ case of his {\suppot} \eqref{suppot quadric} for $T^n _{\text{GZ}}$. This made his ask whether the two dimensional GZ torus $T^2 _{\text{GZ}}$ is {\hamil} isotopic to the Chekanov torus (Question \ref{Kim's question}). We will answer this in the positive in {\cor} \ref{ans Kim's question}.
\end{remark}

\subsection{Proof}
In this section, we prove the main results, namely Theorems \ref{main qmor} and \ref{lags in Qn}, and {\cor} \ref{ans Kim's question}. We first look at how {\cor} \ref{ans Kim's question} follows from Theorem \ref{lags in Qn}.

\begin{proof}[Proof of {\cor} \ref{ans Kim's question}]
First of all, note that $Q^2$ is symplectomorphic to the monotone $S^2 \times S^2$ (with appropriate normalization), i.e.
$$\Phi: Q^2 \xrightarrow[]{\simeq} S^2 \times S^2 ,$$
and through this identification, the {\polar} $(Q^2,Q^1)$ gets translated to the {\polar} $(S^2 \times S^2, \Delta)$ where $\Delta$ denotes the diagonal sphere 
$$\Delta:=\{(x,x) \in S^2 \times S^2 \} ,$$
i.e.
\begin{equation}\label{polar equiv}
    \Phi: (Q^2 , Q^1) \xrightarrow[]{\simeq} ( S^2 \times S^2 , \Delta) .
\end{equation}
Now, our aim is to prove that $\Phi(T^2 _{\text{GZ}} )$ is {\hamil} isotopic to the Chekanov torus. From Theorem \ref{lags in Qn}, we know that $T^2 _{\text{GZ}}$ in $Q^2$ is equal to the Biran circle fibration of $T^1 _{\text{GZ}}$ in $Q^1$, i.e.
$$T^2 _{\text{GZ}} = \wt{T^1 _{\text{GZ}}} . $$
The circle $T^1 _{\text{GZ}}$ in $Q^1$ is the equatorial circle $T^1 _{\text{eq}} $ via the identification between $Q^1$ and $S^2$, i.e.
$$ \Phi(T^1 _{\text{GZ}}) = T^1 _{\text{eq}} .$$
In \cite{[OU16]}, it was proven that for the {\polar} $(S^2 \times S^2, \Delta)$, the {\lag} torus obtained as the Biran circle bundle of the equatorial circle in $ \Delta \simeq S^2$, i.e. $\wt{T^1 _{\text{eq}}} $ (which is denoted by $T_{\text{BC}}$ in the paper), is {\hamil} isotopic to the Chekanov torus (and to the {\FOOO} torus, {\EP} torus, Albers--Frauenfelder torus). From the equivalence of {\polar}s \eqref{polar equiv}, we have
\begin{equation}
    \begin{aligned}
      \Phi(T^2 _{\text{GZ}} ) & = \Phi(\wt{T^1 _{\text{GZ}} } ) \\
      & = \wt{ \Phi (T^1 _{\text{GZ}}) } \\
      & = \wt{  T^1 _{\text{eq}} } = T_{\text{BC}} .
    \end{aligned}
\end{equation}

Thus, we conclude that $\Phi(T^2 _{\text{GZ}} )$ is {\hamil} isotopic to the Chekanov torus in $S^2 \times S^2$.
\end{proof}

We now prove the first assertion of Theorem \ref{lags in Qn}.

\begin{proof}[Proof of Theorem \ref{lags in Qn} (1)]

Recall from Section \ref{Gelfand--Zeitlin system} that the monotone GZ torus $T^n _{\text{GZ}}$ is defined as
\begin{equation}
    T^n _{\text{GZ}}:= \Phi_{Q^n} ^{-1}\left( (0,2\cdot \frac{1}{n},2\cdot \frac{ 2}{n},\cdots, 2\cdot\frac{n-1}{n} ) \right) 
\end{equation}
where $\Phi_{Q^n}$ is 
\begin{equation}
    \begin{gathered}
        \Phi_{Q^n} :Q^n \longrightarrow \R ^n \\
        \Phi_{Q^n} (z):=(\lambda_1 ^{(2)},\lambda_1 ^{(3)},\cdots,\lambda_1 ^{(n+1)}) (z)  ,
    \end{gathered}
\end{equation}
with $\lambda_1 ^{(k)}:Q^n \longrightarrow \R$ such that
\begin{equation}
        \begin{aligned}
          \lambda_1 ^{(2)} (z) & = \frac{\lambda}{|z|^2} i(z_1 \ovl{z_2} - \ovl{z_1}z_2),\\
          (\lambda_1 ^{(k)} )^2 (z) & = -   \sum_{1\leq i < j \leq k } \left( \frac{z_i \ovl{z_j} - \ovl{z_i}z_j}{|z|^2}  \right)^2 \\
           & = \left( \frac{\lambda}{|z|^2} \right)^2 ( ( \sum_{j=1} ^k |z_j|^2 )^2 - |\sum_{j=1} ^k z_j ^2| ^2 ) ,\ k \geq 3,\\
           \lambda: & = 2.
        \end{aligned}
    \end{equation}

From now on, we will be looking at the GZ systems on $Q^n$ and its sub{\mfd} $Q^{n-1}=\{z_{n+1}=0\}$ so we will introduce the notation
$$\lambda_{Q^n} ^{(k)} :Q^n \longrightarrow \R $$
to denote the GZ system for $Q^n$ and 
$$\lambda_{Q^{n-1}} ^{(k)}:Q^{n-1} \longrightarrow \R $$
to denote the GZ system for $Q^{n-1}$.

Now, we prove that the monotone GZ fiber in $Q^n$ is equal to the monotone Biran circle fibration of the monotone GZ fiber in $Q^{n-1}$ for the {\polar}, i.e.
$$T^n _{\text{GZ}} = \wt{T^{n-1} _{\text{GZ}}} .$$
Note that apriori, the two geometric {\constr}s, namely the {\tordeg} and the Biran decomposition has nothing to do with each other.

The Biran {\decomp} associated to the {\polar} $(Q^n,Q^{n-1})$ is expressed in terms of {\coord} as follows \cite[Section 4.2]{[OU16]}:

    \begin{equation}\label{Biran quadric}
        \begin{gathered}
            D Q^{n-1} \xrightarrow[]{\simeq} Q^n \backslash \Delta \\
            (w, \zeta) \mapsto [z_0:\cdots:z_{n+1}] = \left[ \left( 1- \frac{|\zeta|^2}{4} \right)w- \frac{\zeta^2 \ovl{w}}{4} : \left( 1- \frac{|\zeta|^2}{4} \right)^{1/2}\zeta \right] ,
        \end{gathered}
    \end{equation}

     First, we focus on the {\hamil} $\lambda_{Q^n} ^{(n+1)}$. By using 
    $$\sum_{j=0} ^{n+1} z_j ^2 = 0 ,$$
    the expression of $\lambda_1 ^{(n+1)}$ can we rewritten as follows:
    \begin{equation}
        \begin{aligned}
          (\lambda_{Q^n} ^{(n+1)} )^2 
          & =  \left( \frac{\lambda}{|z|^2} \right)^2 ((\sum_{j=0} ^{n} |z_j|^2 )^2 - |\sum_{j=0} ^{n} z_j ^2|^2 ) \\
          & = \left( \frac{\lambda}{|z|^2} \right)^2 ( (|z|^2 - |z_{n+1}|^2)^2 - | - z_{n+1} ^2|^2 ) \\
          & = \left( \frac{\lambda}{|z|^2} \right)^2 (|z|^4 - 2|z|^2|z_{n+1}|^2   ) \\
          & = \lambda^2 (1 - 2 \frac{|z_{n+1}|^2 }{|z|^2}   ) ,
        \end{aligned}
    \end{equation}
    thus, we have
    $$\lambda_{Q^n} ^{(n+1)} (z) = \lambda  (1 - 2 \frac{|z_{n+1}|^2 }{|z|^2}   )^{1/2}  .$$

By using that $|z|^2 = 2$ and that the {\symplecto} \eqref{Biran quadric}, we further have
    \begin{equation}
        \begin{aligned}
          \lambda_{Q^n} ^{(n+1)} (z)  & =  \lambda (1 - 2 \frac{|z_{n+1}|^2 }{|z|^2}   )^{1/2} \\
          & = \lambda (1 - 2 \cdot \frac{1}{2}  \left( 1- \frac{|\zeta|^2}{4} \right)|\zeta|^2  )^{1/2} \\
          & =\lambda (1 - |\zeta|^2 + \frac{|\zeta|^4}{4}   )^{1/2}  \\
          & = \lambda (1 -  \frac{|\zeta|^2}{2}   ) .
        \end{aligned}
    \end{equation}

From \eqref{GZ torus}, the monotone GZ fiber satisfies
   $$\lambda_{Q^n} ^{(n+1)} (z)  =2\cdot \frac{n-1}{n} $$
   which is equivalent to 
   $$\lambda(1 -  \frac{|\zeta|^2}{2})  = 2 \cdot \frac{n-1}{n}   $$
   which is (as $\lambda=2$)
   \begin{equation}\label{last term GZ system}
       \frac{|\zeta|^2}{2}  = \frac{1}{n} .
   \end{equation}
   
   We will see that this is precisely the monotone radius of the Biran circle bundle {\constr} for $(Q^n,Q^{n-1})$. Indeed, by \cite[Section 4.4]{[OU16]} (see also \cite[{\propo} 6.4.1]{[BC09]}), the monotone radius $r_0$ satisfies 
   $$\frac{r_0 ^2}{2} = \frac{\kappa_{Q^n} }{2 } = \frac{2/n }{2 }= \frac{1}{n} $$
   where $\kappa_{Q^n}$ is the {\monoconst} for $Q^n$ (see our convention in Section \ref{convention}). Thus,
    \begin{equation}\label{monotone radius}
        \begin{aligned}
          \frac{r_0 ^2}{2} & = \frac{2\kappa_L }{2 \kappa_L +1} \\
          & = \frac{2 \cdot 1/ 2(n-1)}{2 \cdot 1/ 2(n-1) +1} \\
          & = \frac{1}{n-1} \cdot \frac{n-1}{n} \\
          & = \frac{1}{n} .
        \end{aligned}
    \end{equation}

    Now, we shift our focus to $\lambda_{Q^{n}} ^{(k)}$ where $2 \leq k \leq n$. By using the {\symplecto} \eqref{Biran quadric}, we rewrite the {\hamil}s $\lambda_{Q^{n}} ^{(k)} $ as follows: 
    
    \begin{equation}\label{GZ system Biran}
        \begin{aligned}
          \lambda_{Q^{n}} ^{(k)} (z) & = - \frac{\lambda}{|z|^2} \left( \sum_{1\leq i < j \leq k } (z_i \ovl{z_j} - \ovl{z_i}z_j )^2  \right)^{1/2} \\
           & = -   \frac{\lambda}{|z|^2} \left( \sum_{1\leq i < j \leq k } (2 i \Im (z_i \ovl{z_j}) )^2  \right)^{1/2} \\
          & = - \lambda \left( \sum_{1\leq i < j \leq k } (i \Im (z_i \ovl{z_j}) )^2  \right)^{1/2} \\
           & = - \lambda \left( \sum_{1\leq i < j \leq k } (\left(1- \frac{|\zeta|^2}{2} \right)  i \Im ( w_i\ovl{w_j})  )^2  \right)^{1/2} \\
           & = - \lambda\left(1- \frac{|\zeta|^2}{2} \right) \left( \sum_{1\leq i < j \leq k } (  i \Im ( w_i\ovl{w_j})  )^2  \right)^{1/2} \\
           & =   \left(1- \frac{|\zeta|^2}{2} \right)  \lambda_{Q^{n-1}} ^{(k)} (w) . 
        \end{aligned}
    \end{equation}

By using \eqref{last term GZ system}, we have
\begin{equation}\label{GZ system Biran restricted}
    \begin{aligned}
      \lambda_{Q^{n}} ^{(k)} (z) & =   \left(1- \frac{|\zeta|^2}{2} \right)  \lambda_{Q^{n-1}} ^{(k)} (w) \\
           & =   \left(1- \frac{1}{n} \right)  \lambda_{Q^{n-1}} ^{(k)} (w)\\
           & = \frac{n-1}{n} \cdot \lambda_{Q^{n-1}} ^{(k)} (w) .
    \end{aligned}
\end{equation}

    Finally, we compare the monotone GZ tori $T^{n} _{\text{GZ}}$ and $T^{n-1} _{\text{GZ}}$ by using \eqref{last term GZ system} and \eqref{GZ system Biran restricted}. For $2 \leq k \leq n$, by \eqref{GZ torus} the monotone GZ torus $T^{n} _{\text{GZ}}$ satisfies 
    \begin{equation}
        \lambda_{Q^{n}} ^{(k)} (z) = 2 \cdot \frac{k-2}{n}
    \end{equation}
    which, according to \eqref{GZ system Biran restricted}, is equivalent to 
    \begin{equation}
        \lambda_{Q^{n-1}} ^{(k)} (w) = 2 \cdot \frac{k-2}{n-1} 
    \end{equation}
    which is nothing but the description of $T^{n-1} _{\text{GZ}}$. This implies that $T^n _{\text{GZ}}$ is the monotone Biran circle fibration (i.e. the Biran circle fibration with radius as in \eqref{last term GZ system}) over $T^{n-1} _{\text{GZ}}$, namely
$$T^n _{\text{GZ}} = \wt{T^{n-1} _{\text{GZ}}} .$$
This completes the proof of the first assertion of Theorem \ref{lags in Qn}.

\end{proof}

We now prove the second assertion of Theorem \ref{lags in Qn}. 

\begin{proof}[Proof of Theorem \ref{lags in Qn} (2)]
From {\propo} \ref{vanishing sphere}, we know that the vanishing sphere $S^n _{\text{van}}$ in $X=X_1$ gets mapped to the singular locus of $X_0$ by $\phi_1$, i.e.
\begin{equation}\label{sphere in sing locus}
    \phi_1 (S^n _{\text{van}}) \subset X_0 ^{\text{sing}} .
\end{equation}

On the other hand, we have seen in \eqref{GZ torus is a toric fiber} that from {\propo} \ref{GZ system compatible}, we get 

\begin{equation}
    \phi_1 (T^n _{\text{GZ}}) = \Phi_0^{-1} (x_0) ,
\end{equation}
which implies
\begin{equation}\label{torus in reg locus}
    \phi_1 (T^n _{\text{GZ}}) \subset X_0 \backslash X_0 ^{\text{sing}} .
\end{equation}

The properties \eqref{sphere in sing locus} and \eqref{torus in reg locus} imply
$$ S^n _{\text{van}} \cap T^n _{\text{GZ}} = \emptyset .$$

\end{proof}

\section{Proofs of Theorem \ref{main qmor}, \ref{lags in Qn} (3)}\label{Proof 2}

\subsection{Proof--Part 1}\label{proof-part 1}

\begin{proof}[Proof of Theorem \ref{lags in Qn} (3)]
 We use the following theorem due to Auroux--Kontsevich--Seidel.

\begin{theo}[{\cite[Section 6]{[Aur07]},\cite[Lemma 2.7, {\propo} 2.9]{[Sh16]}}]\label{AKS}
Let $X$ be a closed monotone {\symp} {\mfd} and let $L$ be a monotone {\lag}. Assume that for some local system $\rho$, we have $HF(L,\rho) \neq 0$. The (length 0) closed-open map
$$\mathcal{CO}^0: QH(X;\Lambda_{\text{Nov}} ) \longrightarrow HF(L,\rho)$$
has the following properties:
\begin{enumerate}
    \item For $c_1:=c_1(TX)$, 
$$\mathcal{CO}^0 (c_1)=W_L(\rho) \cdot 1_{(L,\rho)} $$
where $W_L(\rho)$ is the value of the {\suppot} $ W_{L} : Hom (H_1(L;\Z),\C^\ast ) \to \C$ of the {\lag} $L$ equipped with a local system $\rho$.

\item Consider the map 
\begin{equation}\label{c1 multiply}
    c_1\ast - :QH(X;\Lambda_{\text{Nov}} ) \longrightarrow QH(X;\Lambda_{\text{Nov}} )
\end{equation}
and split $QH(X;\Lambda_{\text{Nov}} )$ into generalized eigenspaces {\wrt} $c_1 \ast -$:
\begin{equation}\label{c1 eigen split}
    QH(X;\Lambda_{\text{Nov}} ) = \bigoplus_{w} QH(X;\Lambda_{\text{Nov}} )_w
\end{equation}
where $w$ is an eigenvalue of $c_1 \ast -$. The map
$$\mathcal{CO}^0: QH(X;\Lambda_{\text{Nov}} )_w \longrightarrow HF(L,\rho)$$
is zero if $w \neq W_L (\rho)$ and is a unital homomorphism if $w=W_L(\rho)$.
\end{enumerate}

\end{theo}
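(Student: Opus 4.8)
The plan is to establish assertion (1) — the identification $\mathcal{CO}^0(c_1) = W_L(\rho)\cdot 1_{(L,\rho)}$ — essentially from the disk-counting definition of the closed--open map, and then to deduce assertion (2) as a purely algebraic consequence of (1) together with the fact that $c_1$ is a central element of $QH(X;\Lambda_{\text{Nov}})$. The first thing I would record is the structural input, from the construction of the closed--open map (Fukaya--Oh--Ohta--Ono, Biran--Cornea): in the monotone setting $\mathcal{CO}^0 \colon QH(X;\Lambda_{\text{Nov}}) \to HF(L,\rho)$ is a unital ring homomorphism. Indeed it is defined by counting $J$-\holo{} disks with boundary on $L$ carrying one interior marked point, whose evaluation is constrained to a cycle representing the input class, and one boundary output point; the relevant $A_\infty$ and divisor relations, together with monotonicity (which rules out sphere and disk bubbling in the dimensions that matter), give both multiplicativity and $\mathcal{CO}^0(1_X) = 1_{(L,\rho)}$.

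\textbf{Proof of (1).} This is the heart of the matter. I would analyze $\mathcal{CO}^0(c_1)$ by representing $c_1 = c_1(TX)$ by (a cycle Poincaré dual to) a $J$-\holo{} anticanonical divisor $D$. A virtual-dimension count for the moduli of disks in a relative class $\beta$ carrying an interior incidence with $D$ and a boundary output forces the output to be the fundamental class $[L]$ precisely when the Maslov index $\mu(\beta)$ equals $2$, and makes every other contribution vanish for degree reasons (in the monotone regime no virtual perturbation or compactification issue arises in this dimension). For a Maslov-$2$ disk class $\beta$, positivity of intersections together with the index relation $\mu(\beta) = 2\,(D\cdot\beta)$ shows that each such disk meets $D$ in exactly one transverse point, so imposing the interior incidence with $D$ does not alter the count: the coefficient of $\rho(\partial\beta)\,T^{\omega(\beta)}$ in $\mathcal{CO}^0(c_1)$ is exactly the number $n_\beta$ of Maslov-$2$ disks through a generic boundary point of $L$ entering the definition of the \suppot{}. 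Summing over $\beta$ gives $\mathcal{CO}^0(c_1) = \bigl(\sum_{\mu(\beta)=2} n_\beta\,\rho(\partial\beta)\,T^{\omega(\beta)}\bigr)\,1_{(L,\rho)} = W_L(\rho)\cdot 1_{(L,\rho)}$. (If $L$ has minimal Maslov number $>2$, both sides vanish and the identity is trivially true.)

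\textbf{Proof of (2).} This is now formal. Since $QH(X;\Lambda_{\text{Nov}})$ is finite-dimensional over the (algebraically closed) Novikov field and $c_1$ is central, the generalized eigenspace decomposition $QH = \bigoplus_w QH_w$ of the operator $c_1 \ast -$ is a splitting of $QH$ as a product of rings; write $e_w \in QH_w$ for the \idem{} that is the unit of the factor $QH_w$, so that $1_X = \sum_w e_w$. Fix $w \neq W_L(\rho)$ and $x \in QH_w$. Then $(c_1 - w\,1_X)^{\ast N}\ast x = 0$ for $N$ large; applying the ring homomorphism $\mathcal{CO}^0$ and using $\mathcal{CO}^0(c_1) = W_L(\rho)\cdot 1_{(L,\rho)}$ and $\mathcal{CO}^0(1_X) = 1_{(L,\rho)}$ yields $(W_L(\rho) - w)^N\,\mathcal{CO}^0(x) = 0$ in $HF(L,\rho)$; since $(W_L(\rho) - w)^N$ is a nonzero scalar, $\mathcal{CO}^0(x) = 0$, so $\mathcal{CO}^0$ vanishes on $QH_w$. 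Finally, from $1_X = \sum_w e_w$ and this vanishing, $\mathcal{CO}^0(e_{W_L(\rho)}) = \mathcal{CO}^0(1_X) = 1_{(L,\rho)}$, so $\mathcal{CO}^0$ restricts to a unital ring homomorphism $QH_{W_L(\rho)} \to HF(L,\rho)$.

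\textbf{Main obstacle.} The delicate point is entirely inside the proof of (1): making rigorous that the interior incidence with the anticanonical divisor is ``free'' (each Maslov-$2$ disk meets it exactly once), which rests on positivity of intersections and the index identity, and on tracking orientations and signs carefully enough that the resulting count is literally $n_\beta$ and not merely $n_\beta$ up to sign. This is precisely the content of the cited lemmas of Auroux and of Shelukhin, whose moduli analysis I would invoke rather than redo.
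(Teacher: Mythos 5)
Your outline is correct and is essentially the proof the paper relies on: the paper does not prove Theorem \ref{AKS} itself but imports it from \cite{[Aur07]} (Section 6) and \cite{[Sh16]} (Lemma 2.7, Proposition 2.9), and your part (1) — the anticanonical-divisor representative of $c_1$ plus the Maslov-$2$ incidence count giving $\mathcal{CO}^0(c_1)=W_L(\rho)\cdot 1_{(L,\rho)}$ — together with the formal generalized-eigenvalue argument in part (2) using that $\mathcal{CO}^0$ is a unital ring homomorphism is exactly the strategy of those references. The geometric steps you defer (existence and positivity/index analysis of the divisor representative in the general monotone setting, orientations and local systems) are precisely what the cited lemmas supply, as you acknowledge.
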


\begin{remark}\label{remark on the coeff}
Note that in Theorem \ref{AKS}, it is important that we take the universal Novikov field 
$$\Lambda_{\text{Nov}}:=\{\sum_{j=1} ^{\infty} a_j T^{\lambda_j} :a_j \in \mathbb{C}, \lambda _j  \in \mathbb{R},\lim_{j\to +\infty} \lambda_j =+\infty \} $$
for the quantum cohomology instead of the field of the Laurent series
$$\Lambda_{\text{Lau}} :=\{\sum_{k\geq k_0 } b_k t^k : k_0\in \mathbb{Z},b_k \in \mathbb{C} \}  ,   $$
as $\Lambda_{\text{Nov}}$ is algebraically closed and $\Lambda_{\text{Lau}}$ is not.
\end{remark}

Now, we apply Theorem \ref{AKS} to $Q^n$. The eigenvalues of the map
$$c_1 \ast -: QH(Q^n;\Lambda_{\text{Nov}} )  \longrightarrow QH(Q^n;\Lambda_{\text{Nov}} )  $$
are $0$ and $n\cdot \xi_j,\ j=1,\cdots , n$ where $\{\xi_j\}_{j=1,\cdots , n}$ are solutions to $\xi^n=4$ (see \cite[{\cor} 1.14]{[Sh16]}). Thus, $QH(X;\Lambda_{\text{Nov}} )$ splits into a direct sum of generalized eigenspaces {\wrt} $c_1 \ast -$ as follows:
\begin{equation}\label{c1 eigen split for quadric}
    QH(Q^n;\Lambda_{\text{Nov}} ) = \bigoplus_{1 \leq j \leq n} QH(X;\Lambda_{\text{Nov}} )_{n \cdot \xi_j} \oplus QH(X;\Lambda_{\text{Nov}} )_0 .
\end{equation}

We decompose the unit $1_{Q^n}$ {\wrt} this split:
\begin{equation}
    1_{Q^n} = \sum_{1 \leq j \leq n } e_{n\cdot \xi_j} +e_{0}.
\end{equation}

In view of the {\suppot} computation for $T^n _{\text{GZ}}$ of Y. Kim \eqref{crit point of suppot quadric}, \eqref{crit value of suppot quadric}, Theorem \ref{AKS} implies that 

\begin{equation}\label{CO map torus}
    \begin{gathered}
        \mathcal{CO}^0: QH(X;\Lambda_{\text{Nov}} )_{n \cdot \xi_i} \longrightarrow HF(T^n _{\text{GZ}} , \rho_j) \\
    \mathcal{CO}^0 ( e_{n\cdot \xi_i})= \delta_{i,j} \cdot 1_{(T^n _{\text{GZ}}, \rho_j)}  ,
    \end{gathered}
\end{equation}
where 
\begin{equation}
\delta_{i,j}=
    \begin{cases}
    1, \ \text{if $i=j$} \\
    0,\ \text{if $i\neq j$}.
    \end{cases}
\end{equation}

Similarly, given that $W_{S^n _{\text{van}}}=0$ ($S^n _{\text{van}}$ does not bound any Maslov 2-disks), {\thm} \ref{AKS} implies
\begin{equation}\label{CO map sphere}
    \begin{gathered}
        \mathcal{CO}^0: QH(X;\Lambda_{\text{Nov}} )_0 \longrightarrow HF(S^n _{\text{van}}) \\
    \mathcal{CO}^0 ( e_{0})=1_{S^n _{\text{van}}}   .
    \end{gathered}
\end{equation}

The heaviness criterion Theorem \ref{CO map heavy} applied to \eqref{CO map torus} and \eqref{CO map sphere} imply that $T^n _{\text{GZ}}$ is $\zeta_{e_{n\cdot \xi_j}}$-heavy for all $1 \leq j \leq n$ and $S^n _{\text{van}}$ is $\zeta_{e_0}$-heavy. Moreover, by using that $QH(X;\Lambda_{\text{Nov}} )_{n \cdot \xi_j}$ is a field, we can conclude that $\zeta_{e_{n\cdot \xi_j}}$-{\suphv} for all $1 \leq j \leq n$, see Remark \ref{suphv semi-simple}. On the other hand, as $QH(X;\Lambda_{\text{Nov}} )_0$ is not a field if $n$ is even (see Section \ref{nov and lau} for this in more detail), we will need some extra arguments to show that $S^n _{\text{van}}$ is $\zeta_{e_0}$-{\suphv}, which is the purpose of Sections \ref{nov and lau} and \ref{proof part 2}.

\subsection{The Laurent and Novikov fields}\label{nov and lau}

The aim of this section is to clarify the relation between {\specinv}s defined {\wrt} different {\coeff} fields. Note that $\zeta_\pm: \wt{\Ham}(Q^n) \to \R$ are defined with the Laurent {\coeff}s 
$$\Lambda_{\text{Lau}} =\{\sum_{k\geq k_0 } b_k t^k : k_0\in \mathbb{Z},b_k \in \mathbb{C} \}  ,   $$
while we have worked entirely with the Novikov {\coeff}s 
$$\Lambda_{\text{Nov}}=\{\sum_{j=1} ^{\infty} a_j T^{\lambda_j} :a_j \in \mathbb{C}, \lambda _j  \in \mathbb{R},\lim_{j\to +\infty} \lambda_j =+\infty \} $$
in Section \ref{proof-part 1} (see Remark \ref{remark on the coeff}). Recall that $\Lambda_{\text{Lau}}$ can be embedded to $\Lambda_{\text{Nov}}$ by the inclusion given by $t \mapsto T^{\lambda_0}$, and this inclusion extends to 
$$i: QH(X;\Lambda_{\text{Lau}} ) \hookrightarrow QH(X;\Lambda_{\text{Nov}} ).$$

This subtlety of the choice of the {\coeff} field, which was analyzed in \cite[Section 4.2, 4.5]{[Kaw22A]}, might seem technical but turns out to be very useful and important. To summarize the points from \cite[Section 4.2, 4.5]{[Kaw22A]}, to work with {\specinv}s, e.g. {\EP} {\qmor}s, it is more convenient to work with the Laurent {\coeff} while {\lag} Floer theory is more suited to work with the universal Novikov field, e.g. Theorem \ref{AKS}.

We will focus on the case of $Q^n$. With the Laurent {\coeff}s, the quantum cohomology splits into a direct sum of two fields

\begin{equation}\label{split classic}
    QH(Q^n;\Lambda_{\text{Lau}} ) = Q_+ \oplus Q_- ,
\end{equation}
where the unit $1_{Q^n}$ splits as
\begin{equation}
\begin{gathered}
     1_{Q^n} = e_+ + e_- ,\\
     e_\pm:=\frac{1_{Q^n} \pm PD([pt]) t}{2} .
\end{gathered}
\end{equation}
However, when we consider $QH(Q^n;\Lambda_{\text{Nov}} ) $, $Q_{+}$ and $Q_-$ further splits into finer fields; $Q_{+}$ splits as a direct sum of $n$ fields
$$Q_+ =\bigoplus_{1 \leq i \leq n} Q_{+,i},$$
and 
$Q_{-}$ splits as 
\begin{equation}Q_- = 
    \begin{cases}
    Q_-, \text{ if $n$ is odd}\\
    Q_{-,1} \oplus Q_{-,2}, \text{ if $n$ is even}.
    \end{cases}
\end{equation}
Thus, we have
\begin{equation}\label{split modern}
   QH(Q^n;\Lambda_{\text{Nov}} ) = 
   \begin{cases}
   \left( \bigoplus_{1 \leq i \leq n} Q_{+,i} \right) \oplus Q_-, \text{ if $n$ is odd}\\
        \left( \bigoplus_{1 \leq i \leq n} Q_{+,i} \right) \oplus \left( \bigoplus_{j=1,2} Q_{-,j} \right), \text{ if $n$ is even}
    \end{cases}
\end{equation}
where the unit $1_{Q^n}$ splits as
\begin{equation}
    1_{Q^n} = \begin{cases}
    \sum_{1\leq i \leq n} e_{+,i}+ e_-, \text{ if $n$ is odd}\\
    \sum_{1\leq i \leq n} e_{+,i}+ \sum_{j=1,2} e_{-,j}, \text{ if $n$ is even}.
    \end{cases}
\end{equation}

\begin{equation}
\begin{gathered}
     1_{Q^n} = \sum_{1\leq i \leq n} e_{+,i}+ \sum_{j=1,2} e_{-,j} ,\\
    i (e_+) =\sum_{1\leq i \leq n} e_{+,i},\\
      i (e_-) =\sum_{j=1,2} e_{-,j}.
\end{gathered}
\end{equation}

\begin{remark}
When $n$ is even, the idempotents $e_{-,j},\ j=1,2$ are given by 
\begin{equation}
    e_{-,j}:= \frac{1}{2} \left( 1_{Q^n} \pm PD([S^n])\cdot T^{\lambda_0/2} - PD([pt]) \cdot T^{\lambda_0} \right)
\end{equation}
where $[S^n]$ is the homology class represented by the vanishing sphere $S^n
_{\text{van}}$. The precise expression for $\{e_{+,i}\}$ will be omitted but can be obtained similarly.

\end{remark}

 In Section \ref{proof-part 1}, we have considered yet another split of $QH(Q^n;\Lambda_{\text{Nov}} ) $, namely the eigenvalue decomposition \eqref{c1 eigen split for quadric}:
 \begin{equation}\label{c1 eigen split for quadric 2}
    QH(Q^n;\Lambda_{\text{Nov}} ) = \bigoplus_{1 \leq j \leq n} QH(X;\Lambda_{\text{Nov}} )_{n \cdot \xi_j} \oplus QH(X;\Lambda_{\text{Nov}} )_0 ,
\end{equation}
 and the unit $1_{Q^n}$ is decomposed as
 \begin{equation}
    1_{Q^n} = \sum_{1 \leq j \leq n } e_{n\cdot \xi_j} +e_{0}.
\end{equation}
 
 One can check that $c_1 \ast e_{+,i} =n\cdot \xi_i \cdot  e_{+,i}$ and $c_1 \ast  e_{-,j}=0 $ and thus, the relation between the splits \eqref{split classic}, \eqref{split modern} and \eqref{c1 eigen split for quadric 2} is 
\begin{equation}
    \begin{aligned}
        e_{n\cdot \xi_i} & = e_{+,i} ,\\
        e_+ & = \sum_{1\leq i \leq n} e_{+,i},
    \end{aligned}
\end{equation}
and 
\begin{equation}
    \begin{aligned}
         e_- & = e_0,\\
        e_0 & = \sum_{j=1,2} e_{-,j} , \text{ if $n$ is even}
    \end{aligned}
\end{equation}
which means that $QH(X;\Lambda_{\text{Nov}} )_{n \cdot \xi_j}$ is a field for any $1 \leq j \leq n$ and $QH(X;\Lambda_{\text{Nov}} )_0$ is not (it is a direct sum of two fields). All these are not trivial but easy to see from \cite[Section 7.4]{[Sh16]}.

In \cite[Lemma 31, 32]{[Kaw22A]} (see also \cite[Proof of Theorem 6, Remark 44]{[Kaw22A]}), the author studied the relation between {\specinv}s of a class seen as elements of quantum cohomology with different {\coeff} fields and the lemma implies the following.
  
\begin{theo}\label{comparison lemma}
 We have the following relation between {\asympt} {\specinv}s:
 \begin{equation}
     \begin{gathered}
         \zeta_{e_+} = \zeta_{n\cdot \xi_i}  = \zeta_{e_{+,i}} ,\ i=1,\cdots, n, \\
          \zeta_{e_-} = \zeta_{e_0} = \zeta_{e_{-,j}},\ j=1,2 .
     \end{gathered}
 \end{equation}

\end{theo}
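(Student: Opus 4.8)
The plan is to derive every identity in the statement from two basic facts about how spectral invariants change under extension of the Novikov-type coefficient field, both of which are the content of \cite[Lemmas~31, 32]{[Kaw22A]}: first, that spectral invariants — hence asymptotic spectral invariants — are unchanged under the flat base extension $i\colon QH(Q^n;\Lambda_{\text{Lau}})\hookrightarrow QH(Q^n;\Lambda_{\text{Nov}})$; and second, that they are unchanged under those $\C$-algebra automorphisms of $\Lambda_{\text{Nov}}$ which preserve the valuation and restrict to the identity on $\Lambda_{\text{Lau}}$. Granting these, the chain of equalities is read off from the relations recorded just above, namely $i(e_+)=\sum_i e_{+,i}$, $e_{n\cdot\xi_i}=e_{+,i}$, $i(e_-)=e_0$, and (for $n$ even) $e_0=e_{-,1}+e_{-,2}$, together with $c_1\ast e_{+,i}=n\cdot\xi_i\cdot e_{+,i}$ and $c_1\ast e_{-,j}=0$.

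For the base-change invariance: the filtered Floer homologies $HF^\tau(H;\Lambda_{\text{Lau}}^{\downarrow})$, the inclusion-induced maps $i_\tau$, and the PSS isomorphism are all defined over $\Lambda_{\text{Lau}}^{\downarrow}$ and are compatible with $-\otimes_{\Lambda_{\text{Lau}}^{\downarrow}}\Lambda_{\text{Nov}}^{\downarrow}$; since this extension is faithfully flat, $PSS_{H;\Lambda_{\text{Lau}}}(a)\in\Im(i_\tau)$ holds if and only if $PSS_{H;\Lambda_{\text{Nov}}}(i(a))\in\Im(i_\tau)$, whence $c(H,a)=c(H,i(a))$ for every Hamiltonian $H$ and so $\zeta_a=\zeta_{i(a)}$. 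Taking $a=e_+$ gives $\zeta_{e_+}=\zeta_{i(e_+)}=\zeta_{\sum_i e_{+,i}}$, and taking $a=e_-$ gives $\zeta_{e_-}=\zeta_{e_0}$ (with $i(e_-)=e_0$, which is immediate for $n$ odd and reads $i(e_-)=e_{-,1}+e_{-,2}$ for $n$ even).

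For the automorphism invariance: a $\C$-algebra automorphism $\sigma$ of $\Lambda_{\text{Nov}}$ fixing $\Lambda_{\text{Lau}}$ and preserving the valuation — concretely the tame substitutions $T^{\lambda_0/n}\mapsto\zeta\,T^{\lambda_0/n}$ with $\zeta$ a primitive $n$-th root of unity, and $T^{\lambda_0/2}\mapsto -\,T^{\lambda_0/2}$ — preserves the action filtration and acts equivariantly on the entire Floer package (the structure constants of the Floer differential are integer curve counts times powers of $T$, which $\sigma$ fixes, while $i_\tau$ and PSS are defined over $\Lambda_{\text{Lau}}$), so $c(H,\sigma a)=c(H,a)$ and $\zeta_{\sigma a}=\zeta_a$. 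Now $Q_+$ is a field over $\Lambda_{\text{Lau}}$ whose base change to $\Lambda_{\text{Nov}}$ is $\prod_i Q_{+,i}$, and $T^{\lambda_0/n}\mapsto\zeta\,T^{\lambda_0/n}$ realizes the Galois action cyclically permuting the idempotents $e_{+,i}$; hence $c(H,e_{+,i})$ is independent of $i$ for each fixed $H$, and for $n$ even $T^{\lambda_0/2}\mapsto-\,T^{\lambda_0/2}$ interchanges $e_{-,1}$ and $e_{-,2}=\tfrac{1}{2}(1_{Q^n}\pm PD([S^n])T^{\lambda_0/2}-PD([pt])T^{\lambda_0})$, so $c(H,e_{-,1})=c(H,e_{-,2})$. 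It remains to match the composite idempotent with its pieces: $i(e_+)=\sum_i e_{+,i}$ is a sum of pairwise-orthogonal idempotents all of the same spectral value $v:=c(H,e_{+,i})$, so subadditivity under sums gives $c(H,i(e_+))\le v$; for the reverse inequality one normalizes $H$ by an additive constant so that $v=0$, after which the classes $PSS_{H;\Lambda_{\text{Nov}}}(e_{+,i})$ are pairwise-orthogonal idempotents in $HF(H;\Lambda_{\text{Nov}})$ of zero spectral number, whose leading parts in the associated graded of the action filtration are linearly independent and therefore cannot cancel in the sum, giving $c(H,i(e_+))=v$. The same argument applied to $e_0=e_{-,1}+e_{-,2}$ ($n$ even) yields $\zeta_{e_0}=\zeta_{e_{-,j}}$, while for $n$ odd $e_-=e_0$ is already a field unit and there is nothing to prove. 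Assembling the pieces gives $\zeta_{e_+}=\zeta_{i(e_+)}=\zeta_{e_{+,i}}=\zeta_{n\cdot\xi_i}$ and $\zeta_{e_-}=\zeta_{e_0}=\zeta_{e_{-,j}}$.

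The step I expect to be most delicate is the last one: making rigorous the ``no cancellation at the top filtration level'' claim — defining leading parts in the associated graded of the action filtration, and handling the case where the defining infimum in $c(H,\cdot)$ is not attained, which is the reason the clean conclusion may only be available asymptotically (it is enough that the discrepancy $c(H^{\# k},i(e_+))-c(H^{\# k},e_{+,i})$ stay bounded in $k$) — together with keeping the sign and normalization conventions straight across the three pictures (Laurent versus Novikov, upward versus downward, and the Oakley--Usher scaling fixed in Section~\ref{convention}). This bookkeeping is precisely what \cite[Lemmas~31, 32]{[Kaw22A]} carry out, so I would invoke those lemmas rather than reproduce the argument.
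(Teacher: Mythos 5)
Your proposal is correct and follows essentially the same route as the paper: the paper's own ``proof'' consists precisely of the idempotent bookkeeping of Section \ref{nov and lau} (the relations $i(e_+)=\sum_i e_{+,i}$, $e_{n\cdot\xi_i}=e_{+,i}$, $i(e_-)=e_0=\sum_j e_{-,j}$) together with an appeal to \cite[Lemmas 31, 32]{[Kaw22A]}, which is exactly what you do, and your sketch of those lemmas (base-change invariance plus the valuation-preserving automorphisms permuting the $e_{+,i}$ and swapping $e_{-,1},e_{-,2}$) matches the argument carried out there. The only remark worth making is that your delicate ``no cancellation at top filtration level'' step can be bypassed: since $e_{+,i}=e_{+,i}\ast i(e_+)$, the triangle inequality gives $c(H^{\#k},e_{+,i})\leq c(H^{\#k},i(e_+))+c(0,e_{+,i})$, which after dividing by $k$ yields $\zeta_{e_{+,i}}\leq\zeta_{i(e_+)}$ and, combined with your subadditivity bound, the asymptotic equality you need.
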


\subsection{Proof--Part 2}\label{proof part 2}

In this section, we will combine results from Sections \ref{proof-part 1} and \ref{nov and lau} to complete the proof of Theorem \ref{lags in Qn}.

In Section \ref{proof-part 1}, we have shown that $T^n _{\text{GZ}}$ is $\zeta_{e_{n\cdot \xi_j}}$-{\suphv} for any $1 \leq j \leq n$ and $S^n _{\text{van}}$ is $\zeta_{e_0}$-heavy and thus, Theorem \ref{comparison lemma} implies that $T^n _{\text{GZ}}$ is $\zeta_{e_{+}}$-{\suphv} and $S^n _{\text{van}}$ is $\zeta_{e_-}$-heavy. As $e_-$ is a unit of a field factor of $QH(X,\Lambda_{\text{Lau}})$, $\zeta_{e_-}$ is {\homo} and thus $S^n _{\text{van}}$ is actually $\zeta_{e_-}$-{\suphv}. We have completed the proof of Theorem \ref{lags in Qn}.
\end{proof}

\subsection{Proof of Theorem \ref{main qmor}}

We prove Theorem \ref{main qmor} by using Theorem \ref{lags in Qn}.

\begin{proof}[Proof of Theorem \ref{main qmor}]

From Theorem \ref{lags in Qn}, the {\lag} sphere $S^n _{\text{van}}$ and the monotone GZ torus $T^n _{\text{GZ}}$ are $e_-$-{\suphv} and $e_+$-{\suphv}, respectively. Also by Theorem \ref{lags in Qn}, $S^n _{\text{van}}$ and $T^n _{\text{GZ}}$ are disjoint so one can take a {\hamil} $H$ on $Q^n$ such that it is time-independent and its restriction to $S^n _{\text{van}}$ and $T^n _{\text{GZ}}$ are $0$ and $1$, respectively, i.e.
$$H|_{S^n _{\text{van}}} \equiv 0,\ H|_{T^n _{\text{GZ}}} \equiv 1 .$$
By {\propo} \ref{suphv constant}, we have
$$ \zeta_- (H) = 0,\ \zeta_+(H)= 1 ,$$
which implies 
$$ \zeta_- \neq \zeta_+ .$$
We have proven Theorem \ref{main qmor}.
\end{proof}

\section{Proof of Theorem \ref{three qmor del Pezzo}}\label{Proof del Pezzo}

In this section, we prove Theorem \ref{three qmor del Pezzo}. In \cite{[Sun20]}, Y. Sun studied {\tordeg}s of del Pezzo surfaces and computed the {\suppot}s for the {\lag} tori that are obtained by {\symp}ally parallel transporting the {\lag} fiber tori of the barycenters of the moment polytopes. Unlike the case of quadrics that we considered earlier, the {\sing} loci for the {\tordeg}s of del Pezzo surfaces are isolated sets, and moreover, consist of $A_m$-{\sing}ities. Thus, we do not need to study the geometry of the vanishing cycles, as they are merely $A_m$-{\config}s of {\lagsph}s.

We now explain how to use Y. Sun's results to obtain Theorem \ref{three qmor del Pezzo}.

\begin{proof}[Proof of Theorem \ref{three qmor del Pezzo}]

We start by summarizing Y. Sun's result. In \cite{[Sun20]}, Y. Sun considered {\tordeg}s for the del Pezzo surfaces to compute the {\suppot}s of the {\lag} tori therein. Y. Sun's {\suppot}s (for the monotone torus fiber without bulk-deformation) have non-{\degen} {\critpt}s and thus, the monotone {\lag} torus fiber has non-zero self-Floer homology {\wrt} the local systems corresponding to the non-{\degen} {\critpt}s of the {\suppot}. Y. Sun also classifies the singularities that appear in each {\tordeg}, and the result is as follows:

\begin{enumerate}
    \item For $\mathbb{D}_2$, there is one $A_1$-{\sing}ity (\cite[Section 3.4]{[Sun20]}).

    \item For $\mathbb{D}_3$, there is one $A_1$-{\sing}ity and one $A_2$-{\sing}ity (\cite[Appendix B, case of $X_3$]{[Sun20]}).

    \item For $\mathbb{D}_4$, there is one $A_1$-{\sing}ity and one $A_2$-{\sing}ity (\cite[Appendix B, case of $X_6$]{[Sun20]}).

\end{enumerate}

\begin{remark}
\begin{enumerate}
        \item Y. Sun considers $\mathbb{D}_5$ as well, but we will not deal with this, as $QH(\mathbb{D}_5)$ is not semi-simple. 

\item For $\mathbb{D}_3$ and $\mathbb{D}_4$, there are several {\tordeg}s; see \cite[Appendix B, cases of $X_3,X_4,X_5$]{[Sun20]}, \cite[Appendix B, case of $X_6, X_7$]{[Sun20]}, respectively.

        \item One difference between the case of the quadrics and the case of del Pezzo surfaces is that in the former, the singular locus was not a discrete set, while in the latter the singular locus is discrete.
    \end{enumerate}
\end{remark}

By looking at the vanishing cycles of the $A_m$-{\sing}ities, one obtains a chain of {\lag} spheres (i.e. an $A_m$-{\config}), and in particular we have the following:

\begin{enumerate}
    \item In $\mathbb{D}_2$, there is one {\lag} sphere $S_1$ that is disjoint from the monotone {\lag} torus.

    \item In $\mathbb{D}_3$, there is one {\lag} sphere $S_1$ (from the $A_1$-{\sing}ity) and one {\lag} sphere $S_2$ (from the $A_2$-{\sing}ity) that are both disjoint to each other and also from the monotone {\lag} torus.

    \item In $\mathbb{D}_4$, there is one {\lag} sphere $S_1$ (from the $A_1$-{\sing}ity) and one {\lag} sphere $S_2$ (from the $A_2$-{\sing}ity) that are both disjoint to each other and also from the monotone {\lag} torus.

\end{enumerate}

\begin{remark}
The properties of the {\specinv}s of the {\lag} spheres forming an $A_m$ {\config} is studied in \cite{[KawB]}. The relation between the spheres and the corresponding {\idem}s is also explained in more detail in \cite{[KawB]}.
\end{remark}

Note that all the {\lag} spheres that appear above have non-zero Floer homology, c.f. \cite[{\propo} A.2]{[BM15]}. Now, using that $QH(\mathbb{D}_k)$ for $k=2,3,4$ are semi-simple, c.f. \cite{[BM04]}, and by employing an analogous argument to Section \ref{Proof 2}, we have the following:
\begin{enumerate}
    \item For $k=2$, there exist two units of field factors of $QH(\mathbb{D}_2)$ $e_1,e_2$ for which the {\lag} sphere $S_1$ is $e_1$-{\suphv} and the monotone {\lag} torus is $e_2$-{\suphv}.

    \item For each $k =3,4$, there exist three units of field factors of $QH(\mathbb{D}_k)$ $e_1,e_2,e_3$ for which the {\lag} sphere $S_1$ is $e_1$-{\suphv}, the {\lag} sphere $S_2$ is $e_2$-{\suphv}, and the monotone {\lag} torus is $e_3$-{\suphv}.
\end{enumerate}

As all the involved {\lag}s are disjoint, we have  
\begin{enumerate}
    \item For $k=2$, $\zeta_{e_1} \neq \zeta_{e_2}$.

    \item For $k =3,4$, $\zeta_{e_i} \neq \zeta_{e_j}$, $1\leq i< j \leq 3$.
\end{enumerate}

This finishes the proof of Theorem \ref{three qmor del Pezzo}.

\end{proof}

\begin{remark}
    For the del Pezzo surfaces, unlike the quadrics, we did not have to deal with the change of {\coeff} as in Section \ref{nov and lau}. This is because the minimal Chern number of the del Pezzo surfaces $\mathbb{D}_k,\ k\geq 1$ is $1$, and thus Laurent {\coeff}s and Novikov {\coeff}s will give the same split in quantum cohomology. 
\end{remark}

\section{Proofs of Theorem \ref{KP question del Pezzo} and Applications}

In this section, we prove Theorems Theorem \ref{KP question del Pezzo}, \ref{EPP for Qn and del Pezzo} and \ref{asymp gamma conti}.

We first prove Theorem \ref{EPP for Qn and del Pezzo}.

\begin{proof}[Proof of Theorem \ref{EPP for Qn and del Pezzo}]
First, recall the following result from \cite{[Kaw22A]}:

\begin{theo}[{\cite[Theorem 22]{[Kaw22A]}}]\label{Kaw22 conti}
Let $(X,\omega)$ be a monotone symplectic manifold. Assume its quantum cohomology ring $QH (X;\Lambda)$ is semi-simple i.e. 
$$QH (X;\Lambda)=Q_1 \oplus Q_2 \oplus \cdots \oplus Q_l$$
for some $l\in \mathbb{N}$ where each $Q_j$ is a field. We decompose the unit $1_X \in QH (X;\Lambda)$ into a sum of idempotents with respect to this split:
$$1_X=e_1+e_2+\cdots+e_l,\ e_j\in Q_j.$$
Then for any $i,j \in \{1,2,\cdots,l\},$
$$\mu:=\zeta_{e_i}-\zeta_{e_j}$$
descends from $\wt{\Ham}(X,\omega)$ to $\Ham(X,\omega)$ and defines a homogeneous quasimorphism on $\Ham(X,\omega)$ which is $C^0$-continuous i.e.
$$\mu :(\Ham(X,\omega),d_{C^0})\to \mathbb{R}$$ is continuous. Moreover, it is Hofer Lipschitz continuous.
\end{theo}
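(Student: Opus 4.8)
The plan is to read off the required properties of $\mu$ from the structure of the Entov--Polterovich invariants, the only genuinely hard point being the $C^0$-continuity. Since each $Q_m$ is a field with unit $e_m$, Theorem \ref{EP qmor} makes $\zeta_{e_i}$ and $\zeta_{e_j}$ homogeneous quasimorphisms on $\wt{\Ham}(X,\omega)$, so $\mu=\zeta_{e_i}-\zeta_{e_j}$ is one too. The Hofer Lipschitz estimate is then immediate: the Hofer continuity inequality for spectral invariants passes to the asymptotic invariants, giving $|\zeta_{e_m}(\wt\phi)-\zeta_{e_m}(\wt\psi)|\le\|\phi\psi^{-1}\|_{\Hof}$ for $m=i,j$, hence $|\mu(\wt\phi)-\mu(\wt\psi)|\le 2\|\phi\psi^{-1}\|_{\Hof}$; the right-hand side depends only on the images in $\Ham(X,\omega)$, so this bound survives the descent below.

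Next I would prove the descent. The kernel of $\wt{\Ham}(X,\omega)\to\Ham(X,\omega)$ is the central subgroup $\pi_1(\Ham(X,\omega))$, on which any homogeneous quasimorphism restricts to a homomorphism, so $\zeta_{e_m}|_{\pi_1(\Ham)}$ is a homomorphism to $\R$. The point is that, $X$ being monotone, $\zeta_e|_{\pi_1(\Ham)}$ equals Polterovich's mixed action--Maslov homomorphism, which is independent of the idempotent $e$ (on a Hamiltonian loop $c(\cdot,e)$ is computed by the Seidel element, whose relevant valuation does not depend on $e$ in the monotone case). Hence $\mu|_{\pi_1(\Ham)}=0$; since $\pi_1(\Ham)$ is central one has $(\phi\,n)^k=\phi^k n^k$ for $n\in\pi_1(\Ham)$, and homogeneity then forces $\mu(\phi\,n)=\mu(\phi)$, so $\mu$ descends to a well-defined homogeneous quasimorphism on $\Ham(X,\omega)$.

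The main obstacle is the $C^0$-continuity. As a first reduction I would bound $\mu$ by the spectral norm: from $c(H,1_X)=\max_m c(H,e_m)$, the triangle inequality $c(H,e_j)+c(\ovl{H},e_j)\ge c(\id,e_j)=0$, and $c(\ovl{H},e_j)\le c(\ovl{H},1_X)$, one gets $c(H,e_i)-c(H,e_j)\le c(H,1_X)+c(\ovl{H},1_X)$; applying this to $H^{\#k}$, dividing by $k$, letting $k\to\infty$, and symmetrizing in $i,j$ yields $|\mu(\phi)|\le\ovl{\gamma}(\phi)$, where $\ovl{\gamma}(\phi)=\lim_k\gamma(\phi^k)/k$ is the asymptotic spectral norm. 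So it suffices to control $\mu$ — and in fact $\ovl{\gamma}$ — along $C^0$-small diffeomorphisms, and this is delicate precisely because $\gamma$ itself is not known to be $C^0$-continuous for manifolds of this kind and because iterates of a $C^0$-small diffeomorphism need not be $C^0$-small, so no naive estimate closes the argument. I would attack it by exploiting semisimplicity — in particular the identity $\ovl{\gamma}(\phi)=\max_i\zeta_{e_i}(\phi)-\min_j\zeta_{e_j}(\phi)$, valid by a pigeonhole argument over the finitely many field factors — together with the Calabi property of the Entov--Polterovich quasimorphisms, namely that $\zeta_{e_i}$ and $\zeta_{e_j}$ agree with the Calabi homomorphism on Hamiltonians supported in a displaceable set, so that $\mu$ vanishes on $\Ham_c(U)$ for displaceable $U$, and a quantitative fragmentation and energy--capacity estimate for the spectral invariants of iterates. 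This is where the real analytic work lies; it is carried out in \cite{[Kaw22A]}, and as a by-product it shows that $\ovl{\gamma}$ is $C^0$-continuous whenever $QH^\ast(X;\Lambda)$ is semi-simple, which is the content of Theorem \ref{asymp gamma conti} of the present paper.
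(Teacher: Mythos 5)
This statement is not proven in the paper at all: it is imported verbatim from \cite{[Kaw22A]} (Theorem 22 there), and the present paper only cites it, so there is no internal proof to compare against. Your proposal is consistent with that: the soft parts you supply (homogeneity of $\zeta_{e_i}-\zeta_{e_j}$ from Theorem \ref{EP qmor}, the Hofer Lipschitz bound from the Hofer continuity property, the descent via vanishing of the difference on $\pi_1(\Ham)$ through the Seidel element in the monotone case) are indeed the ingredients of the argument in \cite{[Kaw22A]}, and for the genuinely hard step, the $C^0$-continuity, you ultimately defer to \cite{[Kaw22A]} just as the paper does.

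Two cautions, though. First, your identity $c(H,1_X)=\max_m c(H,e_m)$ is only valid up to bounded error at the level of spectral invariants (the paper's Claim \ref{zeta max} proves the exact equality only for the asymptotic invariants $\zeta$, using the triangle inequality with the valuation term $\nu(e_j)$), so state it asymptotically. Second, be careful not to make the logic circular: in this paper the $C^0$-continuity of $\ovl{\gamma}$ (Theorem \ref{asymp gamma conti}) is \emph{deduced} from the $C^0$-continuity of the differences $\mu_{i,j}$ via $\ovl{\gamma}(\phi)=\max_{i,j}\mu_{i,j}(\phi)$, so you cannot use the continuity of $\ovl{\gamma}$ as the route to the continuity of $\mu$; the actual proof in \cite{[Kaw22A]} controls $\mu$ directly (via the bound of the difference by the spectral norm together with the Calabi/vanishing property on displaceable supports and a fragmentation-type argument), and $\ovl{\gamma}$'s continuity is a by-product, not an input.
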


Now, when $X=Q^n$, we take 
$$\mu:= \zeta_{+}-\zeta_{-} .$$
Theorem \ref{Kaw22 conti} and Theorem \ref{main qmor} imply Theorem \ref{EPP for Qn and del Pezzo}.

When $X=\mathbb{D}_k,\ k=2,3,4$, we take 
$$\mu_{i,j}:= \zeta_{e_i}-\zeta_{e_j}$$
with $i \neq j$, where the {\EP} {\qmor}s $\{\zeta_{e_i}\}$ are taken as in Section \ref{Proof del Pezzo}. Theorem \ref{Kaw22 conti} and Theorem \ref{three qmor del Pezzo} imply Theorem \ref{EPP for Qn and del Pezzo}.

\end{proof}

\begin{remark}
Note that the above {\qmor} extends to the $C^0$-closure of $\Ham(X)$, so $\mu=\zeta_+ - \zeta_-$ and $\mu_{i,j}= \zeta_{e_i}-\zeta_{e_j}$ give {\qmor}s on $\ovl{\Ham}^{C^0}(Q^n)$ and $\ovl{\Ham}^{C^0}(\mathbb{D}_k),\ k=2,3,4$, respectively.
\end{remark}

We prove Theorem \ref{KP question del Pezzo}.

\begin{proof}[Proof of Theorem \ref{KP question del Pezzo}]
    When $k=3,4$, in the final step of the proof of Theorem \ref{EPP for Qn and del Pezzo}, we had
\begin{equation}
    \begin{gathered}
        \mu_{1,2} , \mu_{2,3}: \Ham(\mathbb{D}_k ) \to \R \\
        \mu_{1,2} \neq \mu_{2,3} .
    \end{gathered}
\end{equation}
As $\mu_{1,2}$ and $\mu_{2,3}$ are both Hofer Lipschitz continuous and linearly independent, i.e. there is no constant $\alpha \in \R$ such that $\mu_{2,3} (\phi)= \alpha \cdot \mu_{1,2}(\phi) $ for every $\phi \in \Ham (\mathbb{D}_k )$, we can conclude that $\Ham(\mathbb{D}_k )$ is not quasi-isometric to the real line $\R$ {\wrt} the Hofer metric in the following way: first, we see that $\Z$ (and thus $\R$) quasi-isometrically embeds to $\Ham (\mathbb{D}_k )$. Indeed, by taking $\phi \in \Ham (\mathbb{D}_k )$ such that $\mu_{1,2}(\phi) \neq 0$, we can see that
\begin{equation}
\begin{gathered}
       \Z \to \Ham (\mathbb{D}_k ) \\
    k \mapsto \phi^k
\end{gathered}
\end{equation}
defines a quasi-isometric embedding, as 
\begin{equation}
    \begin{aligned}
      |\mu_{1,2} ( \phi)|  \cdot |k-l| & = |\mu_{1,2} ( \phi^{k-l})| \\
      & \leq  d_{\Hof} (\id , \phi^{k-l}) \\
      & =  d_{\Hof} (\phi^k , \phi^l) \\
        & \leq d_{\Hof} (\id , \phi)   \cdot |k-l| .
    \end{aligned}
\end{equation}

Now, we want to show that $\Z$ (thus $\R$) is not quasi-isometric to $\Ham(\mathbb{D}_k )$. Without loss of generality, we can assume that $\mu_{2,3} (\phi)=0$ (otherwise, we can consider $\mu_{2,3} - \frac{\mu_{2,3} (\phi)}{\mu_{1,2} (\phi) } \cdot \mu_{1,2} $ instead of $\mu_{2,3} $). As $\mu_{1,2} $ and $\mu_{2,3} $ are linearly independent, there is $\psi \in \Ham (\mathbb{D}_k )$ such that $\mu_{2,3} (\psi) \neq 0$. Consider the following embedding of $\Z$ to $\Ham (\mathbb{D}_k )$:
\begin{equation}
\begin{gathered}
       \Z \to \Ham (\mathbb{D}_k ) \\
    k \mapsto  \psi^k .
\end{gathered}
\end{equation}

In order to show that $\Z$ (thus $\R$) is not is not quasi-isometric to $\Ham(\mathbb{D}_k )$, it suffices to show that the orbit $\{\psi^k : k \in \Z\}$ gets arbitrary far from $\{\phi^m : m\in \Z \}$. To see this, for every $m \in \Z$, we have
\begin{equation}\label{nuisbicscsjnovc}
    \begin{aligned}
     d_{\Hof} (\psi^k, \phi^m) & =  d_{\Hof} (\id,  \phi^{-m} \psi^k) \\
     & \geq |\mu_{2,3} (\phi^{-m} \psi^k) | \\
     & \geq |\mu_{2,3} ( \psi^k) - \mu_{2,3} (\phi^{m}) | - Const. \\
     & = |k \cdot \mu_{2,3} ( \psi) -m \cdot 0 | - Const. \\
     & = |k| \cdot |\mu_{2,3} ( \psi)  | - Const .
    \end{aligned}
\end{equation}
Note that the third line uses the property \eqref{constant error} of {\qmor}s and the constant only depends on $\mu_{2,3}$ and the fourth line uses the property \eqref{homogeniety} of {\qmor}s. Now, \eqref{nuisbicscsjnovc} shows that 
\begin{equation*}
    \lim_{k \to \pm \infty} d_{\Hof} (\psi^k, \phi^m)=+\infty
\end{equation*}
and thus $\Z$ (thus $\R$) is not is not quasi-isometric to $\Ham(\mathbb{D}_k )$. This completes the proof of Theorem \ref{KP question del Pezzo}.
\end{proof}

We prove {\thm} \ref{asymp gamma conti}.

\begin{proof}[Proof of {\thm} \ref{asymp gamma conti}]
We start with the following claim.

\begin{claim}\label{zeta max}
Suppose $QH(X)$ is semi-simple and denote the split of it by 
$$ QH (X;\Lambda) = Q_1 \oplus Q_2 \oplus \cdots \oplus Q_l.$$
For any {\hamil} $H$, we have
$$ \zeta_{1_X} (H )  = \max_{1 \leq j \leq l}  \zeta_{e_j} (H) .$$
\end{claim}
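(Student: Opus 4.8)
The plan is to exploit two structural properties of the asymptotic spectral invariants $\zeta_e$: their behaviour under the decomposition of the unit into idempotents, and the basic inequalities that control $c(H,a)$ in terms of $c(H,e_j)$ for the pieces $e_j$. First I would recall that for a semi-simple $QH(X;\Lambda) = Q_1 \oplus \cdots \oplus Q_l$, the triangle inequality for spectral invariants applied to the product structure gives $c(H, a \ast b) \leq c(H,a) + c(H,b)$, and in particular, using $e_j \ast e_j = e_j$ and $1_X = \sum_j e_j$, one has the sandwich
$$
\max_{1 \le j \le l} c(H^{\# k}, e_j) \;\le\; c(H^{\# k}, 1_X) \;\le\; \sum_{j=1}^l c(H^{\# k}, e_j) \;+\; C,
$$
where $C$ accounts for the (bounded) error terms coming from the fact that $1_X$ is a sum of the $e_j$'s; the left inequality follows because $e_j = 1_X \ast e_j$ and $c(H^{\#k},e_j) \le c(H^{\#k},1_X) + c(\,\text{triv}, e_j)$ with the last term a fixed constant independent of $k$.

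Next I would divide by $k$ and pass to the limit $k \to \infty$. The constant $C$ and the fixed-constant error terms wash out, so the left inequality yields $\max_j \zeta_{e_j}(H) \le \zeta_{1_X}(H)$. For the reverse inequality, dividing the right-hand bound by $k$ gives $c(H^{\# k}, 1_X)/k \le \sum_j c(H^{\# k}, e_j)/k + C/k$; the naive limit of the right side is $\sum_j \zeta_{e_j}(H)$, which is too weak. To fix this, the key observation is that among the $l$ summands $c(H^{\#k},e_j)$, only the one achieving the maximum contributes to leading order: since $1_X \ast e_j = e_j$ and the $e_j$ are mutually orthogonal idempotents, the filtration level of $PSS(1_X)$ is governed by the ``worst'' factor. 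Concretely, I would argue that $c(H^{\#k}, 1_X) = \max_j c(H^{\#k}, e_j) + O(1)$ directly: the class $PSS_{H^{\#k}}(1_X) = \sum_j PSS_{H^{\#k}}(e_j)$ lies in filtration level $\le \max_j c(H^{\#k},e_j)$ plus a bounded correction, because each $PSS_{H^{\#k}}(e_j)$ does, and conversely it cannot lie in a strictly lower level since projecting onto the factor $Q_{j_0}$ realizing the maximum (a chain-level operation that shifts filtration by a bounded amount) would otherwise contradict the definition of $c(H^{\#k}, e_{j_0})$. Dividing by $k$ and letting $k\to\infty$ then gives $\zeta_{1_X}(H) = \max_j \zeta_{e_j}(H)$.

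The main obstacle I anticipate is making the ``projection onto a field factor is filtration-bounded'' step rigorous: one needs that the ring projection $QH(X;\Lambda) \to Q_{j_0}$ is induced by a chain map whose effect on the action filtration is bounded independently of $k$ (equivalently, that multiplication by the idempotent $e_{j_0}$ has this property). This is where I would invoke the triangle inequality $c(H^{\#k}, e_{j_0} \ast x) \le c(H^{\#k}, x) + c(\mathrm{const}, e_{j_0})$ together with its ``reverse'' companion obtained by multiplying back, so that $c(H^{\#k}, e_{j_0}) \le c(H^{\#k}, 1_X) + \text{const}$ and also $c(H^{\#k}, 1_X) \le \max_j c(H^{\#k}, e_j) + \text{const}$ after decomposing $1_X = \sum e_j$ and using that spectral invariants of a sum are bounded by the max of the summands' spectral invariants up to an additive constant depending only on the fixed classes $e_j$. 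All these constants are $k$-independent, so they vanish in the asymptotic limit, and the identity $\zeta_{1_X}(H) = \max_{1\le j \le l}\zeta_{e_j}(H)$ follows. I would close by remarking that $\zeta_{1_X} = \overline{\gamma}$-type quantities are exactly what is needed for the subsequent $C^0$-continuity argument.
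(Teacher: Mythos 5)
Your proposal is correct and follows essentially the same route as the paper: the lower bound comes from the triangle inequality applied to $e_j = 1_X \ast e_j$ (with the $k$-independent constant $\nu(e_j)$ washing out after dividing by $k$), and the upper bound from the standard property that the spectral invariant of a sum of classes is at most the maximum of the spectral invariants of the summands. The only cosmetic difference is that this last property requires no additive correction, so your detour through $c(H^{\# k},1_X)=\max_j c(H^{\# k},e_j)+O(1)$ and chain-level projection arguments is unnecessary, though harmless since all constants vanish in the asymptotic limit.
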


Claim \ref{zeta max} implies that for any $\phi \in \Ham (X)$,
\begin{equation}\label{useful relation C0}
    \ovl{\gamma} (\phi ) =   \max_{1 \leq i, j \leq l} \mu_{i,j} (\phi)
\end{equation}  
where 
$$ \mu_{i,j} (\phi_H) = \mu_{i,j} (H):=   \zeta_{e_i} (H) - \zeta_{e_j} (H).$$
In fact,  
\begin{equation}
    \begin{aligned}
      \ovl{\gamma} (\phi_H ) & =  \zeta_{1_X} (H ) +  \zeta_{1_X} (\ovl{H} ) \\
      & = \max_{1 \leq i \leq l}  \zeta_{e_i} (H) + \max_{1 \leq j \leq l}  \zeta_{e_j} (\ovl{H}) \\
& = \max_{1 \leq i,j \leq l}  \{ \zeta_{e_i} (H) - \zeta_{e_j} (H) \} \\ & =   \max_{1 \leq i, j \leq l} \mu_{i,j} (\phi) .
    \end{aligned}
\end{equation}

By {\thm} \ref{Kaw22 conti}, we know that for each $i,j$, $\mu_{i,j}$ is $C^0$-continuous and therefore, $ \ovl{\gamma}$ is $C^0$-continuous. {\thm} \ref{EPP for Qn and del Pezzo} implies $\ovl{\gamma} \neq 0$ for $X=Q^n$ and $X=\mathbb{D}_k,\ k=2,3,4$.
\end{proof}

We prove Claim \ref{zeta max}.

\begin{proof}[Proof of Claim \ref{zeta max}]
We first prove $  \zeta_{1_X} (H )  \geq \max_{1 \leq j \leq l}  \zeta_{e_j} (H) $ for any $H$. By the triangle inequality, we get
$$ c(H , 1_X) + \nu (e_j) \geq c (H , e_j) $$
for any $j$ and {\hamil} $H$ and thus
$$ \zeta_{1_X} (H) \geq   \zeta_{e_j} (H) $$ 
for any $j$ and {\hamil} $H$. Therefore, 
$$  \zeta_{1_X} (H )  \geq \max_{1 \leq j \leq l}  \zeta_{e_j} (H) $$
for any {\hamil} $H$. 

Next, we prove $  \zeta_{1_X} (H )  \leq \max_{1 \leq j \leq l}  \zeta_{e_j} (H)$. A standard property of spectral invariants implies 
$$ c (H , 1_X)  \leq \max_{1 \leq j \leq l}  c (H ,e_j) $$
as $1_X= e_1 +e_2 + \cdots + e_l$ and thus
$$ \zeta_{1_X} (H )  \leq \max_{1 \leq j \leq l}  \zeta_{e_j} (H) $$
for any {\hamil} $H$. This completes the proof of Claim \ref{zeta max}.

\end{proof}

\appendix

\Addresses


\begin{thebibliography}{99}




\bibitem[Abr00]{Abr00} Lowell Abrams, The quantum Euler class and the quantum cohomology of the Grassmannians, \emph{Isr. J. Math.} 117, 335-352 (2000).

\bibitem[Alb05]{[Alb05]} Peter Albers, \emph{On the extrinsic topology of Lagrangian submanifolds}, 
IMRN 2005, 38, 2341--2371, Erratum IMRN 2010 7, 1363--1369



\bibitem[Aur07]{[Aur07]} Denis Auroux, Mirror symmetry and T-duality in the complement of an anticanonical divisor. \emph{J. G\"okova Geom. Topol.}, GGT 1 (2007), 51–-91.


\bibitem[Bir01]{[Bir01]} Paul Biran, \emph{Lagrangian barriers and symplectic embeddings}, Geom. Funct. Anal. 11 (2001), no. 3, 407--464. 

\bibitem[Bir06]{[Bir06]} Paul Biran, \emph{Lagrangian non-intersections}, Geom. Funct. Anal. 16 (2006), no. 2, 279--326. 



\bibitem[BC09]{[BC09]} Paul Biran, Octav Cornea, \emph{Rigidity and uniruling for Lagrangian submanifolds}, Geom. Topol. 13 (2009), no. 5, 2881--2989. 


\bibitem[BHS21]{[BHS21]} Lev Buhovsky, Vincent Humili\`ere, Sobhan Seyfaddini, The action spectrum and $C^0$ symplectic topology, \emph{Math. Ann. } 380 (2021), no. 1-2, 293--316. 


\bibitem[BM04]{[BM04]} Arend Bayer, Yuri I. Manin, (Semi)simple exercises in quantum cohomology, The Fano Conference, 143--173, Univ. Torino, Turin, 2004. 



\bibitem[BM15]{[BM15]} Paul Biran, Cedric Membrez, The Lagrangian cubic equation, \emph{Int. Math. Res. Not. IMRN }
2016, no. 9, 2569--2631.





\bibitem[CGHS]{[CGHS]} Daniel Cristofaro-Gardiner, Vincent Humili\`ere, Sobhan Seyfaddini, PFH spectral invariants on the two-sphere and the large scale geometry of Hofer's metric, arXiv:2102.04404v3, To appear in \emph{J. Eur. Math. Soc. (JEMS).}



\bibitem[CGHMSS]{[CGHMSS]} Daniel Cristofaro-Gardiner, Vincent Humili\`ere, Cheuk Yu Mak, Sobhan Seyfaddini, Ivan Smith, \emph{Quantitative Heegaard Floer cohomology and the Calabi invariant},  arXiv:2105.11026v1





\bibitem[BC09]{[BC09]} Paul Biran, Octav Cornea, \emph{Rigidity and uniruling for Lagrangian submanifolds}, Geom. Topol. 13 (2009), no. 5, 2881--2989. 




\bibitem[BC12]{BC12} Paul Biran, Octav Cornea, Lagrangian topology and enumerative geometry, \emph{Geom. Topol.} 16, No. 2, 963-1052 (2012).


\bibitem[EliPol]{[EliPol]} Yakov Eliashberg, Leonid Polterovich, Symplectic quasi-states on the quadric surface and Lagrangian submanifolds, arXiv:1006.2501v1



\bibitem[EP03]{[EP03]} Michael Entov, Leonid Polterovich, \emph{Calabi quasimorphism and quantum homology},  Int. Math. Res. Not. 2003, no. 30, 1635--1676. 



\bibitem[EP06]{[EP06]} Michael Entov, Leonid Polterovich, \emph{Quasi-states and symplectic intersections}, Comment. Math. Helv. 81 (2006), 75--99




\bibitem[EP08]{[EP08]} Michael Entov, Leonid Polterovich, \emph{Symplectic quasi-states and semi-simplicity of quantum homology}, Toric Topology (eds. M.Harada, Y.Karshon, M.Masuda and T.Panov), 47--70, Contemporary Mathematics 460, AMS, 2008.


\bibitem[EP09]{[EP09]} Michael Entov, Leonid Polterovich, \emph{Rigid subsets of symplectic manifolds}, Compos. Math. 145 (2009), no. 3, 773--826.





\bibitem[EPP12]{[EPP12]} Michael Entov, Leonid Polterovich, Pierre Py, \emph{On continuity of quasimorphisms for symplectic maps}, With an appendix by Michael Khanevsky. Progr. Math., 296, Perspectives in analysis, geometry, and topology, 169--197, Birkh\"auser/Springer, New York, 2012.





\bibitem[Eva]{[Eva]} Jonathan Evans, KIAS Lectures on Symplectic aspects of degenerations,  arXiv:2403.03519v1


\bibitem[FOOO09]{[FOOO09]} Kenji Fukaya, Yong-Geun Oh, Hiroshi Ohta, Kaoru Ono, \emph{Lagrangian intersection Floer theory: anomaly and obstruction. Part I.}, American Mathematical Society, Providence, RI; International Press, Somerville, MA, 2009



\bibitem[FOOO12]{[FOOO12]} Kenji Fukaya, Yong-Geun Oh, Hiroshi Ohta, Kaoru Ono, \emph{Toric degeneration and nondisplaceable Lagrangian tori in $S^2\times S^2$}, Int. Math. Res. Not. 2012, no.13, 2942--2993. 








\bibitem[FOOO19]{[FOOO19]} Kenji Fukaya, Yong-Geun Oh, Hiroshi Ohta, Kaoru Ono, \emph{Spectral invariants with bulk, quasi-morphisms and Lagrangian Floer theory}, Mem. Amer. Math. Soc. 260 (2019), no.1254





\bibitem[GS83]{[GS83]} Victor Guillemin, Shlomo Sternberg, \emph{The Gelfand--Cetlin system and quantization of the complex flag manifolds}, J. Funct. Anal. 52 (1983), no. 1, 106--128. 



\bibitem[HK15]{[HK15]} Megumi Harada, Kiumars Kaveh, Integrable systems, toric degenerations and Okounkov bodies, \emph{Invent. Math.}, 202 (2015), no. 3, 927–985. 


\bibitem[Hof93]{[Hof93]} Helmut Hofer, Estimates for the energy of a symplectic map, \emph{Comment. Math. Helv.} 68 (1993), no. 1, 48–-72. 


\bibitem[JS]{[JS]} Du\v{s}an Joksimovi\'c, Sobhan Seyfaddini, A H\"older-type inequality for the $C^0$ distance and Anosov-Katok pseudo-rotations, \emph{preprint}, arXiv:2207.11813v1   






\bibitem[Kaw22A]{[Kaw22A]} Yusuke Kawamoto, Homogeneous quasimorphisms, $C^0$-topology and Lagrangian intersection, \emph{Comment. Math. Helv.} 97 (2022), no. 2, pp. 209–254


\bibitem[Kaw22B]{[Kaw22B]} Yusuke Kawamoto, On $C^0$-Continuity of the Spectral Norm for Symplectically Non-Aspherical Manifolds, \emph{Int. Math. Res. Not. IMRN} 2022, no. 21, 17187--17230.



\bibitem[KawA]{[KawA]} Yusuke Kawamoto, Donaldson divisors and {\EP} {\qmor}s, \textit{to appear}



\bibitem[KawB]{[KawB]} Yusuke Kawamoto, Isolated hypersurface singularities, spectral invariants, and quantum cohomology, arXiv:2304.01847v1, https://doi.org/10.1515/crelle-2024-0013, to appear in \emph{Journal f\"ur die reine und angewandte Mathematik (Crelle's Journal)}, 2024.





\bibitem[KS]{[KS]} Yusuke Kawamoto, Egor Shelukhin, Spectral invariants over the integers, arXiv:2310.19033v1





\bibitem[KimA]{[KimA]} Yoosik Kim, Disk potential functions for quadrics, \emph{J. Fixed Point Theory Appl.} 25, No. 2, Paper No. 46, 31 p. (2023).


\bibitem[KimB]{[KimB]} Yoosik Kim, Chekanov torus and Gelfand--Zeitlin torus in $S^2 \times S^2$, \emph{preprint}, arXiv:2109.01435v1



\bibitem[MS98]{[MS98]} Dusa McDuff, Dietmar Salamon, \emph{Introduction to symplectic topology}, Third edition. Oxford Mathematical Monographs. The Clarendon Press, Oxford University Press



\bibitem[MS04]{[MS04]} Dusa McDuff, Dietmar Salamon, \emph{J-holomorphic Curves and Symplectic Topology: Second Edition}, American Mathematical Society Colloquium Publications, 52. American Mathematical Society, Providence, RI, 2004 
 
 

\bibitem[NNU]{[NNU]} Yuichi Nohara, Takeo Nishinou, Kazushi Ueda, Potential functions via toric degenerations, arXiv:0812.0066v2
 
 
\bibitem[NU16]{[NU16]} Yuichi Nohara, Kazushi Ueda, \emph{Floer cohomologies of non-torus fibers of the Gelfand--Cetlin system}, J. Symplectic Geom. 14 (2016), no. 4, 1251--1293. 


\bibitem[NNU10]{[NNU10]} Yuichi Nohara, Takeo Nishinou, Kazushi Ueda, \emph{Toric degenerations of Gelfand--Cetlin systems and potential functions}, Adv. Math. 224 (2010), no. 2, 648--706. 



\bibitem[Oh05]{[Oh05]} Yong-Geun Oh, \emph{Construction of spectral invariants of Hamiltonian paths on closed symplectic manifolds}, The breadth of symplectic and Poisson geometry, 525--570, Progr. Math., 232 2005


 
 
\bibitem[OU16]{[OU16]} Joel Oakley, Michael Usher, \emph{On certain Lagrangian submanifolds of $S^2\times S^2$ and $\mathbb{C}P^n$}, Algebr. Geom. Topol. Volume 16, Number 1 (2016), 149--209.




\bibitem[PSS96]{[PSS96]} Sergey Piunikhin, Dietmar Salamon, Matthias Schwarz, Symplectic Floer–Donaldson theory and quantum cohomology. Contact and Symplectic Geometry. Cambridge University Press. (1996) pp. 171–200.



\bibitem[Pol01]{[Pol01]} Leonid Polterovich, The geometry of the group of symplectic diffeomorphisms, \emph{Lectures in Mathematics ETH Z\"urich. Birkh\"auser Verlag}, Basel, 2001.



\bibitem[PS]{[PS]} Leonid Polterovich, Egor Shelukhin, Lagrangian configurations and Hamiltonian maps, arXiv:2102.06118v3




\bibitem[Rua01]{[Rua01]} Wei-Dong Ruan, Lagrangian torus fibration of quintic hypersurfaces. I. Fermat quintic case, Winter School on Mirror Symmetry, Vector Bundles and Lagrangian Submanifolds (Cambridge, MA, 1999), 297–332, AMS/IP Stud. Adv. Math., 23, Amer. Math. Soc., Providence, RI, 2001. 
 
 
 
\bibitem[Sch00]{[Sch00]} Matthias Schwarz, \emph{On the action spectrum for closed symplectically aspherical manifolds}, Pacific J. Math. 193 (2000), no. 2, 419--461.




\bibitem[Sey13]{[Sey13]}  Sobhan Seyfaddini, $C^0$-limits of Hamiltonian paths and the Oh-Schwarz spectral invariants, \emph{Int. Math. Res. Not. IMRN} 2013, no. 21, 4920--4960.




\bibitem[Sh22]{[Sh22]} Egor Shelukhin, Viterbo conjecture for Zoll symmetric spaces, \emph{ Invent. Math.} 230 (2022), no. 1, 321--373



\bibitem[Sh16]{[Sh16]} Nick Sheridan, On the Fukaya category of a Fano hypersurface in projective space, \emph{
Publ. Math. Inst. Hautes \'Etudes Sci.} 124 (2016), 165--317. 




\bibitem[Sun20]{[Sun20]} Yuhan Sun, $A_n$-type surface singularity and nondisplaceable Lagrangian tori, \emph{Internat. J. Math.} 31 (2020), no. 3



\bibitem[Ush13]{Ush13} Michael Usher, Hofer's metrics and boundary depth, \emph{Ann. Sci. Éc. Norm. Supér.} (4)46(2013), no.1, 57–128.

\bibitem[Vit92]{[Vit92]} Claude Viterbo, \emph{Symplectic topology as the geometry of generating functions}, Math. Ann. 292 (1992), no. 4, 685--710. 













\end{thebibliography}
\end{document}